\newcommand{\OV}{\mathcal{O}_{\tilde{V}_0^{(n+1)},\xi _0}}
\theoremstyle{plain}
\newtheorem{Thm}{Theorem}[subsection]         
\newtheorem{Lemma}[Thm]{Lemma}
\newtheorem{Cor}[Thm]{Corollary}
\newtheorem{Prop}[Thm]{Proposition}
\theoremstyle{definition}
\newtheorem{Def}[Thm]{Definition}  
\newtheorem{Par}[Thm]{ }
\newtheorem{Rem}[Thm]{Remark}
\theoremstyle{remark}
\newtheorem{Notation}[Thm]{Notation}
\newtheorem{Ex}[Thm]{Example}
\numberwithin{equation}{Thm}
\begin{document}

\title{Nash multiplicities and resolution invariants}
\author{A. Bravo, S. Encinas, B. Pascual-Escudero
\footnote{The authors were partially supported by MTM2012-35849. The third autor was supported by BES-2013-062656.\\
{\em Mathematics subject classification. 14E15, 14J17.}\\
\textit{Keywords:} Rees algebras. Resolution of Singularities. Arc Spaces}} 

\AtEndDocument{\bigskip{\footnotesize
  \textsc{Depto. Matem\'aticas,
Facultad de Ciencias, Universidad Aut\'onoma de Madrid 
and ICMAT (Instituto de Ciencias Matematicas CSIC-UAM-UC3M-UCM), Cantoblanco 28049 Madrid, Spain} \par  
  \textit{E-mail address}, A. Bravo: \texttt{ana.bravo@uam.es} \par
  \textit{E-mail address}, B. Pascual-Escudero: \texttt{beatriz.pascual@uam.es} \par
  \addvspace{\medskipamount}
  \textsc{Depto. Matem\'atica Aplicada,
and IMUVA, Instituto de Matem\'aticas.
Universidad de Valladolid. Paseo de Bel\'en 7. 47011 Valladolid, Spain.
} \par
  \textit{E-mail address}, S. Encinas: \texttt{sencinas@maf.uva.es}
}}

\date{}

\maketitle

\begin{abstract}
The Nash multiplicity sequence was defined by M. Lejeune-Jalabert as a non-increasing sequence of integers attached to a germ of a curve inside a germ of a hypersurface. M. Hickel generalized this notion and described a sequence of blow ups which allows us to compute it and study its behavior. \\
\\
In this paper, we show how this sequence can be used to compute some invariants that appear in algorithmic resolution of singularities.
Moreover, this indicates that these invariants from constructive resolution are intrinsic to the variety since they can be read in terms of its space of arcs.
This result is a first step connecting explicitly arc spaces and algorithmic resolution of singularities.
\end{abstract}


\normalsize

\section*{Introduction}
Consider a variety $X$ of dimension $d$ over a field $k$. By a \textit{resolution of singularities} we mean a proper birational morphism 
$$X\stackrel{\phi }{\longleftarrow }X'$$
such that $X'$ is regular. In addition we require that $\phi $ induces an isomorphism on the set of regular points of $X$, and that the exceptional divisor $\phi ^{-1}(\mathrm{Sing}(X))$ has normal crossing support.

In \cite{Hir}, Hironaka proved that given a variety over a field of charateristic zero it is possible to find a resolution of singularities of $X$ defined by a sequence of blow ups at smooth centers. Moreover, it is possible to construct such a sequence by means of some \textit{invariants} attached to the points of $X$ (see \cite{V1}, \cite{V2}, \cite{B-M}). The study of those invariants becomes interesting as soon as they provide an algorithm for the construction of a resolution of singularities for any variety over a field of characteristic zero. Furthermore, they may also give insight into the resolution phenomenon, in order to solve the problem for more general fields. Through these invariants, one can define \textit{resolution functions}, which stratify $X$ in locally closed sets, so that there is a canonical (regular) center to blow up at each step of the resolution sequence. Then resolution is achieved via the construction of a finite sequence of blow ups.\\
\\
One of the ingredients that one may take into account for this stratification is the \textit{multiplicity function} (see \cite{V}). The multiplicty is an upper semi-continuous function defined at each point $\xi $ of a variety $X$. If $X$ is defined over $\mathbb{C}$ then the multiplicity at $\xi $ is the smallest rank of the generic fiber for all possible local morphisms $(X,\xi )\longrightarrow (\mathbb{C}^{d},0)$. If $X$ is a reduced equidimensional scheme, then $X$ is regular if and only if the multiplicity equals one at every point.

\subsubsection*{Constructive resolution of singularities}
In short, a \textit{constructive resolution of singularities} of $X$ is given by an upper semi-continuous function 
$$f:X\longrightarrow (\Lambda ,\geq )\mbox{,}$$
where $(\Lambda ,\geq )$ is some well ordered set. The maximum value of $f$  determines the first smooth center $C\subset X$ to blow up: $X\stackrel{\pi _1}{\longleftarrow} X_1$.
Right after this blow up, a new upper semi-continuous function $f_1:X_1\longrightarrow (\Lambda ,\geq )$ is defined, in such a way that $f_1(\pi _1 ^{-1}(\xi ))=f(\xi )$ for any $\xi \in X\setminus C$, and $f_1(\xi ')<f(\xi )$ whenever $\pi _1(\xi ')=\xi \in C$. If $f$ is appropiately constructed so that it is constant if and only if $X$ is smooth, then resolution is achieved after a finite number of steps. One way to construct such a function is to associate a string of invariants to each point.\\ 
\\
Looking at the multiplicity function on $X$ may be a good starting point when attempting to construct a resolution of singularities of $X$. But unfortunately, the strata defined by the multiplicity function may be non smooth. Thus, the use of other invariants becomes necessary in order to refine the stratification so that one can have a smooth stratum to choose as the center of the first blow up. The most important of these invariants is the so called Hironaka's \textit{order function} (see \cite{E_V97} or Definition \ref{def:order} in this paper). From it, many other invariants may be defined (see Section \ref{subsec:alg_res}). If we choose the multiplicity function as the first coordinate of $f$, $C$ is contained in $\mathrm{\underline{Max}\; mult}(X)$, the closed subset of $X$ where the multiplicity reaches its highest value. Now fix some point $\xi \in \mathrm{\underline{Max}\; mult}(X)$. Locally, in a neighbourhood of $\xi $,  a finite local projection $p$ to some smooth scheme   of dimension $d=\mathrm{dim}(X)$ can be defined, inducing a bijection between $\mathrm{\underline{Max}\; mult}(X)$ and its image (see \cite{Br_V}, \cite{Br_V1}). There, Hironaka's order function can be defined at each point in the image of $\mathrm{\underline{Max}\; mult}(X)$. This function, which we will for the moment denote by $\mathrm{ord}^{(d)}_{\xi }(X)$, does not depend on the projection (if it is general enough). Moreover, it can be shown that lowering the maximum multiplicity in a neighbourhood of $\xi $ is equivalent to solving a suitable problem in a $d$-dimensional smooth scheme. This gives the possibility of constructing a resolution of singularities of $X$ by resolving such problems, which simplifies the process. We will refer to $\mathrm{ord}^{(d)}_{\xi }(X)$ as \textit{Hironaka's order function in dimension $d$} (see Section \ref{subsec:elim} for full details). It can be shown that $\mathrm{ord}^{(d)}_{\xi }(X)$ is the next relevant coordinate of $f$, refining the stratum $\mathrm{\underline{Max}\; mult}(X)$ (see Section \ref{subsec:alg_res}), so we will consider
$$f(\xi )=(\mathrm{mult}_{\xi }(X),\mathrm{ord}^{(d)}_{\xi }(X),\ldots )\mbox{.}$$
Surprisingly, $\mathrm{ord}^{(d)}_{\xi }(X)$ can readily be read by looking at a suficiently general \textit{arc} in ${\mathcal L}(X)$, as our main result, Theorem \ref{thm:main_order-r}, shows.

\subsubsection*{Arcs}
There are many other approaches to the study of singularities. Jet and arc spaces of varieties often appear among them. Many properties of the jet schemes and the arc scheme of a variety are linked to its singularities. See for instance the works of Ein, Ishii, Musta\c{t}\u{a} and Yasuda where some singularity types are characterized through topological or geometrical properties of the associated arc schemes (\cite{Mus1}, \cite{Mus2}, \cite{E_M_Y}, \cite{E_M}, \cite{E_M2}, \cite{I}).\\
\\
It is in this context of arc spaces where the \textit{Nash multiplicity sequence} appears. It was defined by M. Lejeune-Jalabert \cite{L-J} as a non-increasing sequence of positive integers attached to a germ of a curve inside a germ of a hypersurface. M. Hickel generalized this notion to arbitrary codimension \cite{Hickel05} and defined a sequence of blow ups (at points) that allows us to compute Nash multiplicity sequences and study their behaviour. Given a variety $X$, fix an arc through a point $\xi \in \mathrm{\underline{Max}\; mult}(X)$ (not necessarily closed). By means of its graph $\Gamma \subset X\times \mathbb{A}^1$, the arc $\varphi $ defines a sequence of blow ups at points:
\begin{equation*}
\xymatrix@R=0pt@C=30pt{
X_0=X\times \mathbb{A}^1 & X_1 \ar[l]_>>>>>{\pi _1} & \ldots \ar[l]_{\pi _2} & X_r \ar[l]_{\pi _r}\mbox{,}\\
\xi _0=(\xi ,0) & \xi _1 & \ldots & \xi _r 
}\end{equation*}
where $\xi _i$ is the intersection of the exceptional divisor of $\pi _i$ and the strict transform of the graph $\Gamma $ in $X_i$ for $i=1,\ldots ,r$. The Nash multiplicity sequence is then the sequence
$$m_0\geq m_1\geq \ldots \geq m_r\geq 1\mbox{,}$$
in which $m_i$ is the multiplicity of $X_i$ at $\xi _i$ for $i=0,\ldots ,r$ (see Section \ref{subsec:RAandNash} for details).

\subsubsection*{Our results}
In this work, we analyze a connection of arc spaces with the problem of resolution of singularities. We study the Nash multiplicity sequence for arcs in varieties, and find a relation between the structure of this sequence and some invariants of resolution. In particular, for an algebraic variety $X$ of dimension $d$, we are in position to give a relation between the length $\rho _{X,\varphi }$ of the first step of the sequence (before the Nash multiplicity decreases for the first time) and Hironaka's order function in dimension $d$. We introduce an invariant for $X$ and $\varphi $ at $\xi $ which is sharper than $\rho _{X,\varphi }$ and which we will denote by $r_{X,\varphi }$. More precisely, we will see that $\rho _{X,\varphi }=\left[ r_{X,\varphi }\right] $. For this invariant, we prove the following result:\\
\\
\textbf{\textit{Main Theorem (\ref{thm:main_order-r}):}} Let $X$ be a variety of dimension $d$. Let $\xi $ be a point in $\mathrm{\underline{Max}\; mult}(X)$. Then, 
$$\mathrm{ord}_{\xi }^{(d)}(X)=\min _{\varphi }\left\{ \frac{r_{X,\varphi }}{\mathrm{ord}(\varphi )}\right\} \mbox{,}$$
where $\varphi $ runs through all arcs in $X$ through $\xi $.\\
\\

As we mentioned before, this minimum is achieved for any arc which is generic enough with respect to the tangent cone of $X$ at $\xi $.\\
\\
When we work with a hypersurface $X$, computing invariants and giving a local expression of the equation of $X$ is much easier than when we deal with a variety of higher codimension. To avoid this difficulty, we rely on the results on \textit{local presentations} attached, in this case, to the multiplicity (see \cite{V}). They allow us to work locally with a set of hypersurfaces with weights.\\
\\
\textit{Rees algebras} happen to provide a useful tool for the study of these local presentations and their behaviour under blow ups. They keep track locally of the behaviour of resolution functions before and after blowing up at smooth centers. We will also see that our problem can be translated into a problem of resolution of Rees algebras.\\
\\
Our work is organized as follows. In section 1, we present some preliminary definitions and results on Rees algebras, as well as some examples motivating their use and their connection to algorithmic resolution. We also include some comments about the resolution invariants we want to focus on. Section 2 is devoted to arcs and the Nash multiplicity sequence. It is in section 3 where we finally connect all the previous concepts and state our main result (Theorem \ref{thm:main_order-r}). The proof of the main result is given in section 4 where we first prove it in the simpler case of a hypersurface. Then we deduce the general case from this one, making use of what we know from \cite{V} about local presentations attached to the multiplicity.

\subsubsection*{Acknowledgements}
The authors profited from conversations with F. Aroca, R. Docampo, S. Ishii and H. Kawanoue, as well as from the support of their research team. We also thank C. Abad, J. Conde-Alonso and I. Efraimidis for their help with the presentation,  and the referee for many useful suggestions.

\section{Rees algebras and their use in resolution of singularities}
\subsection{Rees algebras}
\begin{Def}
Let $R$ be a Noetherian ring. A \textit{Rees algebra $\mathcal{G}$ over $R$} is a graded ring\footnote{$W$ is just a variable in charge of the degree of the $I_i$.}, that is:
$$\mathcal{G}=\bigoplus _{l\in \mathbb{N}}I_{l}W^l\subset R[W]$$
for some ideals $I_i\in R$, $i\in \mathbb{N}$ such that $I_0=R$ and $I_iI_j\subset I_{i+j}\mbox{,\; } \forall i,j\in \mathbb{N}$, which is also a finitely generated $R$-algebra. That is, there exist some $f_1,\ldots ,f_r\in R$ and weights $n_1,\ldots ,n_r\in \mathbb{N}$ such that
\begin{equation}\label{def:Rees_alg_generadores}
\mathcal{G}=R[f_1W^{n_1},\ldots ,f_rW^{n_r}]\mbox{.}
\end{equation}
\end{Def}

\begin{Rem}
Rees algebras can be defined  over a Noetherian scheme $V$  in the obvious manner, that is, $\mathcal{G}$ will be locally at each $\xi \in V$ as in (\ref{def:Rees_alg_generadores}), with $\mathrm{Spec}(R)\subset V$ an open affine subset.
\end{Rem}

\begin{Def}\label{def:amalgama_op}
Let $\mathcal{G}_1$ and $\mathcal{G}_2$ be two Rees algebras. We denote by $\mathcal{G}_1\odot \mathcal{G}_2$ the smallest Rees algebra containing both of them. If $\mathcal{G}_1=R[f_1W^{n_1},\ldots ,f_rW^{n_r}]$ and $\mathcal{G}_2=R[g_1W^{m_1},\ldots ,g_lW^{m_l}]$, then $\mathcal{G}_1\odot \mathcal{G}_2=R[f_1W^{n_1},\ldots ,f_rW^{n_r},g_1W^{m_1},\ldots ,g_lW^{m_l}]$. If $\mathcal{G}_2'=R'[g_1W^{m_1},\ldots ,g_lW^{m_l}]$, where $R'\subset R$ is a subring, by abuse of notation we will sometimes denote by $\mathcal{G}_1\odot \mathcal{G}_2'$ the Rees algebra $\mathcal{G}_1\odot \mathcal{G}_2$, where $\mathcal{G}_2$ is the extension of $\mathcal{G}_2'$ to a Rees algebra over $R$.
\end{Def}

\begin{Par}\textbf{Notations and Conventions}
From now on we will assume $k$ to be a field of characteristic zero, unless otherwise stated. We will also assume $R$ to be a smooth  $k$-algebra, or $V$ to be a smooth scheme     over  $k$.
\end{Par}

\begin{Def}
Let $\mathcal{G}$ be a Rees algebra over $R$. The \textit{singular locus} of $\mathcal{G}$, Sing$(\mathcal{G})$, is the closed set given by all the points $\xi \in \mathrm{Spec}(R)$ such that $\nu _{\xi }(I_l)\geq l$, $\forall l\in \mathbb{N}$.\footnote{Here $\nu _{\xi}(I)$ denotes the order of the ideal $I$ in the regular local ring $R_{\mathcal{M}_{\xi }}$, where $\mathcal{M}_{\xi }$ is the ideal defining the point $\xi $.} Equivalently, if $\mathcal{G}=R[f_1W^{n_1},\ldots ,f_rW^{n_r}]$, then it can be shown (\cite[Proposition 1.4]{E_V}) that
$$\mathrm{Sing}(\mathcal{G})=\left\{ \xi \in \mathrm{Spec}(R):\, \nu _{\xi }(f_i)\geq n_i,\; \forall i=1,\ldots ,r\right\} \mbox{.}$$
Note that the singular locus of the Rees algebra over $V$ generated by $f_1W^{n_1}, \ldots ,f_rW^{n_r}$ does not coincide with the usual definition of the singular locus of the subvariety of $V$ defined by $f_1,\ldots ,f_r$.
\end{Def}
We will sometimes refer to the singular locus of a Rees algebra as the \textit{closed set attached to} it.

\begin{Def}\label{def:order}
We define the \textit{order of an element  $fW^{n}\in \mathcal{G}$ at $\xi \in \mathrm{Sing}(\mathcal{G})$} as
$$\mathrm{ord}_{\xi }(fW^{n})=\frac{\nu _{\xi }(f)}{n}\mbox{.}$$
We define the \textit{order of the Rees algebra $\mathcal{G}$ at $\xi \in \mathrm{Sing}(\mathcal{G})$} as the infimum of the orders of the elements of $\mathcal{G}$ at $\xi$, that is
$$\mathrm{ord}_{\xi }(\mathcal{G})=\inf _{fW^{n}\in \mathcal{G}}\left\{ \mathrm{ord}_{\xi }(fW^{n})\right\} \mbox{.}$$
\end{Def}

Actually, one could define the order of an ${\mathcal O}_V$-Rees algebra ${\mathcal G}$ at any point $\xi \in V$, but for our purposes, only the order at points in $ \mathrm{Sing}(\mathcal{G})$ will be needed. 

\begin{Thm}\cite[Proposition 6.4.1]{E_V} Let $\mathcal{G}=R[f_1W^{n_1},\ldots ,f_rW^{n_r}]$ be a Rees algebra and let
 $\xi\in \mathrm{Sing}(\mathcal{G})$. Then
$$\mathrm{ord}_{\xi }(\mathcal{G})=\min _{i=1\ldots r}\left\{ \mathrm{ord}_{\xi }(f_iW^{n_i})\right\} \mbox{.}$$
\end{Thm}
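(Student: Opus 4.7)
The inequality $\mathrm{ord}_{\xi}(\mathcal{G}) \leq \min_{i}\{\mathrm{ord}_{\xi}(f_iW^{n_i})\}$ is immediate, since each generator $f_iW^{n_i}$ is itself an element of $\mathcal{G}$, so the infimum in the definition of $\mathrm{ord}_{\xi}(\mathcal{G})$ is bounded above by the orders of these particular elements. The content of the theorem is therefore the reverse inequality.

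The plan for this direction is to pick an arbitrary homogeneous element $fW^{n}\in \mathcal{G}$ and bound its order from below. Since $\mathcal{G}=R[f_1W^{n_1},\ldots ,f_rW^{n_r}]$ is generated as an $R$-algebra by the weighted forms $f_iW^{n_i}$, every homogeneous element of degree $n$ can be written as
$$fW^{n}=\sum_{\alpha}a_{\alpha}\,f_1^{\alpha_1}\cdots f_r^{\alpha_r}W^{n},$$
where the sum runs over multi-indices $\alpha=(\alpha_1,\ldots,\alpha_r)\in \mathbb{N}^r$ satisfying the weight condition $\sum_{i}\alpha_i n_i=n$, with coefficients $a_{\alpha}\in R$. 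Equivalently, $f=\sum_{\alpha}a_{\alpha}f_1^{\alpha_1}\cdots f_r^{\alpha_r}$.

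Next I would use that $\nu_{\xi}$ is the order function of the regular local ring $R_{\mathcal{M}_{\xi}}$, and hence satisfies $\nu_{\xi}(gh)\geq \nu_{\xi}(g)+\nu_{\xi}(h)$ (in fact equality since the associated graded ring is a polynomial ring, hence a domain) and $\nu_{\xi}(g+h)\geq \min\{\nu_{\xi}(g),\nu_{\xi}(h)\}$. Setting $m:=\min_i\{\mathrm{ord}_{\xi}(f_iW^{n_i})\}=\min_i\{\nu_{\xi}(f_i)/n_i\}$, we have $\nu_{\xi}(f_i)\geq m\,n_i$ for every $i$. A monomial estimate then gives
$$\nu_{\xi}\!\left(a_{\alpha}f_1^{\alpha_1}\cdots f_r^{\alpha_r}\right)\geq \nu_{\xi}(a_{\alpha})+\sum_{i}\alpha_i\nu_{\xi}(f_i)\geq \sum_{i}\alpha_i\,m\,n_i=m\,n.$$
Taking the minimum over $\alpha$ and using superadditivity under sums, we obtain $\nu_{\xi}(f)\geq m\,n$, i.e. $\mathrm{ord}_{\xi}(fW^{n})\geq m$. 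Since $fW^{n}$ was arbitrary, the infimum defining $\mathrm{ord}_{\xi}(\mathcal{G})$ is bounded below by $m$, which combined with the trivial inequality yields the equality, and in particular shows that the infimum is attained (at one of the generators).

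The only point requiring a little care is the decomposition of a homogeneous element as a weighted polynomial in the generators; this is a standard observation about finitely generated graded algebras with positive weights, but it must be invoked explicitly so that the degree constraint $\sum\alpha_i n_i=n$ is available to translate the bound $\nu_{\xi}(f_i)\geq m\,n_i$ into the desired bound $\nu_{\xi}(f)\geq m\,n$. No genuine obstacle arises; the result is essentially a valuation-theoretic reformulation of the generation hypothesis.
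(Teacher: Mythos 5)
Your proof is correct. Note that the paper itself gives no proof of this statement --- it is quoted directly from \cite[Proposition 6.4.1]{E_V} --- and your argument (the trivial inequality from the generators, plus the decomposition of a homogeneous degree-$n$ element as $\sum_{\alpha}a_{\alpha}f_1^{\alpha_1}\cdots f_r^{\alpha_r}$ with $\sum_i\alpha_i n_i=n$, combined with the valuation properties of $\nu_{\xi}$ on the regular local ring $R_{\mathcal{M}_{\xi}}$) is exactly the standard argument behind the cited result, with no gaps.
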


\begin{Def}
Let $\mathcal{G}$ be a Rees algebra over $R$. Let $\mathcal{P}\subset R$ be a prime ideal. We say that $\mathcal{P}$ is a \textit{permissible center for $\mathcal{G}$} if $R/\mathcal{P}$ is a regular ring and $\nu _{\mathcal{P}}(\mathcal{G})\geq 1$. That is, $\mathcal{P}$ is permissible for $\mathcal{G}$ if it defines a smooth closed set in $\mathrm{Spec}(R)$ which is also contained in $\mathrm{Sing}(\mathcal{G})$. If $\mathcal{G}$ is a Rees algebra over $V$, a closed set $Y\subset V$ is a permissible center for $\mathcal{G}$ if it is a regular subvariety contained in $\mathrm{Sing}(\mathcal{G})$.\\
\end{Def}

\begin{Def}\label{def:transf_law}\cite[Definition 6.1]{V07}
Let $\mathcal{G}$ be a Rees algebra on $V$. A \textit{$\mathcal{G}$-permissible transformation}
$$V\stackrel{\pi }{\leftarrow} V_1\mbox{,}$$
is the blow up of $V$ at a permissible center $Y\subset V$. We denote then by $\mathcal{G}_1$ the transform of $\mathcal{G}$ by $\pi $, which is defined as
$$\mathcal{G}_1:=\bigoplus _{l\in \mathbb{N}}I_{l,1}W^l\mbox{,}$$
where 
\begin{equation}\label{eq:transf_law}
I_{l,1}=I_l\mathcal{O}_{V_1}\cdot I(E)^{-l}
\end{equation}
for $l\in \mathbb{N}$ and $E$ the exceptional divisor of the blow up $V\longleftarrow V_1$.
\end{Def}

\begin{Def}\label{def:res_RA}
Let $\mathcal{G}$ be a Rees algebra over $V$. A \textit{resolution of $\mathcal{G}$} is a finite sequence of  blow ups, 
\begin{equation}\label{diag:res_Rees_algebra}
\xymatrix@R=0pt@C=30pt{
V=V_0 & V_1 \ar[l]_>>>>>{\pi _1} & \ldots \ar[l]_{\pi _2} & V_l \ar[l]_{\pi _l}\\
\mathcal{G}=\mathcal{G}_0 & \mathcal{G}_1 \ar[l] & \ldots \ar[l] & \mathcal{G}_l \ar[l]
}\end{equation}
at permissible centers $Y_i\subset \text{Sing} ({\mathcal G}_i)$, $i=0,\ldots, l-1$, such that $\mathrm{Sing}(\mathcal{G}_l)=\emptyset$,  and the exceptional divisor of the composition $V_0\longleftarrow V_l$ is a union of hypersurfaces with normal crossings. Recall that  a set of hypersurfaces $\{H_1,\ldots, H_r\}$ in a smooth $n$-dimensional $V$  has normal crossings at a point $\xi\in V$ if there is a regular system of parameters $x_1,\ldots, x_n\in {\mathcal O}_{V, \xi}$ such that if $\xi\in H_{i_1}\cap \ldots \cap H_{i_s}$,  and $\xi \notin H_l$ for $l\in \{1,\ldots, r\}\setminus \{i_1,\ldots,i_s\}$, then
 ${\mathcal I}(H_{i_j})_{\xi}=\langle x_{i_j}\rangle$  for $i_j\in \{i_1,\ldots, i_s\}$;
we say that $H_1,\ldots, H_r$ have normal crossings in V if they have normal crossings at each point of $V$. 
\end{Def}

 In \cite{Hir}, H. Hironaka proves resolution of singularities of varieties over fields of characteristic zero by showing that the maximum value of the Hilbert Samuel function can be lowered after a sequence of blow ups at suitable regular centers. To this end, he proceeds as follows. Let $X$ be an algebraic variety over a (perfect) field $k$, let $\text{max  HS}(X)$ be the maximum value of the Hilbert Samuel function on $X$,  let $\underline{\text{Max}} \text{ HS}(X)$ be the maximum stratum of this function, and let $\xi \in \underline{\text{Max}} \text{ HS}(X)$. Then in  some  (\'etale) neighborhood of $\xi$ there is an immersion of $X$ in some smooth $V$ and a Rees algebra ${\mathcal G}$ strongly attached to  $\underline{\text{Max}} \text{ HS}(X)$ (see Example \ref{ex:HS_repr_local_pres}  below; see also  \cite{Hir1}).    Then he shows that 
 a resolution of ${\mathcal G}$ induces a sequence of blow ups over X that ultimately leads to a lowering of $\text{max  HS}(X)$. To conclude, he  proves that such resolution exists when the characteristic is zero:

\begin{Thm}\label{thm:existence_res_Rees_algebras}\cite{Hir}
Let $k$ be a field of characteristic zero, and let $R$ be a smooth $k$-algebra. Given a Rees algebra $\mathcal{G}$ over $R$, there exists a resolution of $\mathcal{G}$.
\end{Thm}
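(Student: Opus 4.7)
The plan is to reduce the statement to the classical existence of resolution for pairs (marked ideals) in characteristic zero, which is the content of Hironaka's theorem. First, I would replace $\mathcal{G}=R[f_1W^{n_1},\ldots,f_rW^{n_r}]$ by a Rees algebra generated in a single weight. Setting $N=\mathrm{lcm}(n_1,\ldots,n_r)$ and $J=(f_1^{N/n_1},\ldots,f_r^{N/n_r})\subset R$, the pair $(J,N)$ generates a Rees algebra whose integral closure equals that of $\mathcal{G}$. Since $\mathrm{Sing}(\mathcal{G})$ and the transformation law (\ref{eq:transf_law}) depend only on the integral closure, it suffices to produce a resolution of the pair $(J,N)$.

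For this, I would set up an induction on $d=\dim V$ using the resolution function whose first coordinate is $\max_{\xi}\mathrm{ord}_{\xi}(\mathcal{G})$. This function is upper semi-continuous on $\mathrm{Sing}(\mathcal{G})$, so its top locus is closed; the aim at each stage is to strictly lower its maximum value. In characteristic zero, at every point $\xi$ of the top locus one has a regular hypersurface of maximal contact $Z\subset V$ which contains the top locus locally and is respected by any permissible blow up. Restricting $\mathcal{G}$ to $Z$ via the coefficient algebra construction yields a Rees algebra on a regular scheme of dimension $d-1$; by the inductive hypothesis it admits a resolution, and the corresponding sequence of blow ups in $V$ strictly drops the maximum order of $\mathcal{G}$ unless one has reached the monomial case.

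In the monomial case the ideal $J$ has become a product of powers of equations of the accumulated exceptional divisors, and resolution is achieved by an explicit combinatorial argument on the exponents. Chaining these steps produces a sequence of permissible transformations strictly decreasing a well-founded resolution function, so the process terminates with $\mathrm{Sing}(\mathcal{G}_l)=\emptyset$; the normal-crossings condition on the exceptional divisor is automatic because each center is chosen to be transverse to the divisors accumulated so far. The genuine obstacle is the descent in dimension via maximal contact: this is precisely where characteristic zero is used, and it also requires a careful proof that the resolution function is independent of the various choices (maximal contact hypersurface, local chart, generators) involved in defining the coefficient algebra. Everything else — the reduction to a pair, the monomial case, and the termination argument — is either formal or combinatorial in nature.
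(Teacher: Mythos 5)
The paper does not prove this statement at all: it is imported from the literature, with \cite{Hir} as the source, and the constructive version is delegated to \cite[Theorem 3.1]{E_V97} in Theorem \ref{thm:res_Rees_algebras}. So there is no in-text argument to compare yours against; what you have written is a correct outline of the standard constructive proof, and it is worth noting that it establishes the stronger constructive statement rather than Hironaka's original existence theorem (whose proof in \cite{Hir} is organized quite differently, around the Hilbert--Samuel function, and is non-constructive). Your individual reductions are sound: the $\mathrm{lcm}$ trick does produce an integrally equivalent algebra, since $(f_iW^{n_i})^{N/n_i}=f_i^{N/n_i}W^N\in R[JW^N]$ while $JW^N\subset\mathcal{G}$, and both $\mathrm{Sing}$ and the transform law (\ref{eq:transf_law}) are insensitive to integral closure (this single-weight reduction is standard, in the spirit of \cite[Lemma 1.7]{Br_G-E_V}); the induction via maximal contact, coefficient algebras, the monomial case, and termination along a well-ordered resolution function is exactly the architecture of \cite{E_V97} and \cite{Br_V2}. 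Two points you should make explicit to close the sketch: first, maximal contact hypersurfaces exist only after saturating by differential operators, so one must replace $\mathcal{G}$ (or the pair $(J,N)$) by $\mathrm{Diff}(\mathcal{G})$ --- harmless, because differential closure also preserves the singular locus and is compatible with permissible transforms, but it is where characteristic zero enters concretely; second, the normal-crossings condition is not quite ``automatic'': it is enforced by carrying the set $E$ of accumulated exceptional divisors through the induction (basic objects $(V,\mathcal{G},E)$) and splitting the order function into monomial and non-monomial parts, which is also precisely where the independence-of-choices problem you correctly flag gets resolved.
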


The previous result is existencial. The following theorem says that, in fact, resolution of Rees algebras can be constructed; i.e., given a Rees algebra ${\mathcal G}$ in a smooth $V$ defined over a field of characteristic zero, there is a procedure that indicates how to actually construct a sequence of blow ups that leads to a resolution. 
See also \cite{V1} and \cite{B-M}.

\begin{Thm}\label{thm:res_Rees_algebras}\cite[Theorem 3.1]{E_V97}
Let $k$ be a field of characteristic zero, and let $R$ be a smooth $k$-algebra. Given a Rees algebra $\mathcal{G}$ over $R$, it is possible to construct a resolution of $\mathcal{G}$.
\end{Thm}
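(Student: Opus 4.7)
The plan is to follow the constructive approach pioneered by Villamayor and Encinas--Villamayor, producing an upper semi-continuous resolution function $f:\mathrm{Sing}(\mathcal{G})\to (\Lambda,\geq)$ into a well-ordered set whose maximum locus at each stage serves as the permissible center to blow up. The first coordinate of $f$ is naturally chosen to be Hironaka's order $\mathrm{ord}_{\xi}(\mathcal{G})$ from Definition \ref{def:order}; by the theorem cited just before that definition, this equals $\min_i \nu_{\xi}(f_i)/n_i$, so it is upper semi-continuous on $\mathrm{Sing}(\mathcal{G})$ as a minimum of finitely many upper semi-continuous functions. The whole strategy is to first lower the maximum value of $\mathrm{ord}(\mathcal{G})$ along the singular locus, and then iterate.

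The core inductive device is \emph{elimination} of one variable. The plan is, for each point $\xi \in \underline{\mathrm{Max}}\,\mathrm{ord}(\mathcal{G})$, to construct in a suitable neighborhood a transversal smooth projection to a regular scheme $V^{(d-1)}$ of dimension $d-1$, together with an elimination Rees algebra $\mathcal{G}^{(d-1)}$ on it, having the property that a strict drop of the maximum order of $\mathcal{G}$ is equivalent to a resolution of $\mathcal{G}^{(d-1)}$. The remaining coordinates of $f(\xi)$ are then defined to be the values of the resolution function attached to $\mathcal{G}^{(d-1)}$, which is constructible by the inductive hypothesis; the base case $d=1$ is immediate, since $\mathrm{Sing}(\mathcal{G})$ is then a finite union of closed points and a finite sequence of point blow ups resolves it.

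Two main obstacles appear. First, after a few permissible transformations the transform $\mathcal{G}_1$ defined by (\ref{eq:transf_law}) mixes the proper transform of the original generators with contributions supported on the exceptional divisor, so the elimination setup is not preserved verbatim. To handle this I would factor out the exceptional part and track it by auxiliary integer ``satellite'' invariants (counting the exceptional components that have appeared since the last strict drop of the order), placed after $\mathrm{ord}(\mathcal{G})$ in the string $f(\xi)$ to break ties once the order has stabilized. Second, one must verify inductively that $\underline{\mathrm{Max}}\,f$ is smooth (hence permissible) and that blowing it up produces a strict lexicographic decrease of the maximum value of $f$ in $(\Lambda,\geq)$; the normal crossings requirement of Definition \ref{def:res_RA} is encoded precisely by the satellite coordinates, which force each chosen center to meet the existing exceptional divisor transversally.

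Termination then follows from the well-ordering of $\Lambda$ combined with Theorem \ref{thm:existence_res_Rees_algebras}: the maximum value of $f$ strictly decreases after every blow up, so $\mathrm{Sing}(\mathcal{G})$ must become empty in finitely many steps. The hardest part of this program is the construction and analysis of the elimination algebra $\mathcal{G}^{(d-1)}$ and the verification that its resolution does force a drop of $\mathrm{ord}(\mathcal{G})$; in characteristic zero this can be accomplished by taking suitable elementary symmetric functions of carefully chosen generators (a global version of the Tschirnhausen transformation), which is exactly where the characteristic zero hypothesis on $k$ is used.
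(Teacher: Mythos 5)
First, a point of calibration: the paper does not prove this statement at all; it is imported verbatim from \cite[Theorem 3.1]{E_V97}, so your proposal must be measured against the constructive proof in the literature. Your skeleton is the standard and correct one: an upper semicontinuous resolution function with first coordinate $\mathrm{ord}_{\xi}(\mathcal{G})$, induction on dimension, and satellite invariants recording exceptional components. Two remarks on the route: \cite{E_V97} itself inducts by restricting to a hypersurface of maximal contact and forming coefficient algebras, whereas you induct by elimination via generic projections, which is the later approach of \cite{V07} and \cite{Br_V}; in characteristic zero both are viable and yield weakly equivalent auxiliary algebras, so this substitution is legitimate. But note that elimination as in Definition \ref{def:elim_alg} requires first replacing $\mathcal{G}$ by its differential closure and checking transversality ($\tau\geq 1$ at points of maximal order, which in characteristic zero is obtained by differentiating $b-1$ times an element of order $b$) --- this, rather than the symmetric-function construction of generators (which survives in positive characteristic, as \cite{Br_V} shows), is where the hypothesis on $k$ actually enters.

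There are, however, two genuine gaps. First, your termination argument is a non sequitur: Theorem \ref{thm:existence_res_Rees_algebras} asserts that \emph{some} resolution exists, which says nothing about whether \emph{your} sequence of blow ups at $\mathrm{\underline{Max}}\,f$ terminates. The correct and sufficient argument is simply that a strictly decreasing sequence in a well-ordered set is finite; the entire mathematical content is then the strict lexicographic drop of $\max f$ after each blow up, which you assert but which rests on the commutativity of elimination with permissible transforms (\ref{Prop_elim}) and on the inductive hypothesis. Second, and more seriously, your claimed equivalence ``strict drop of $\max\mathrm{ord}(\mathcal{G})$ $\Leftrightarrow$ resolution of $\mathcal{G}^{(d-1)}$'' fails along the sequence exactly in the \emph{monomial case}: after several transformations the non-exceptional part of the transform can become trivial, the algebra being generated (up to integral closure) by the ideal of the exceptional divisors with weights. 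At that point the dimension-lowering induction produces nothing, and no count of exceptional components ``since the last drop'' resolves the situation; one needs the separate combinatorial algorithm for monomial algebras, which is a distinct pillar of \cite{E_V97} and is absent from your sketch. Your satellite coordinates break ties and ensure normal crossings of centers with the accumulated divisor, but they do not substitute for this case; without it the induction is incomplete and Definition \ref{def:res_RA} cannot be reached.
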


For more details about transformations and resolution of Rees algebras, we refer to \cite{E_V97} and \cite{Br_V2}.

\begin{Rem}
To construct a resolution of $\mathcal{G}$, we use the so called \textit{resolution invariants}. The most important resolution invariant is Hironaka's order function, $\mathrm{ord}_{\xi }\mathcal{G}$, at a point $\xi \in \mathrm{Sing}(\mathcal{G})$ (\cite{Hir1}). All other invariants derive from it (see Section \ref{subsec:alg_res} and \cite[9, 14, 18]{Br_V2}).
\end{Rem}

\begin{Rem}
For some purposes,  during the resolution, one may need to keep track of more information  than that given by the Rees algebra itself. We refer to $(V^{(n)},\mathcal{G}^{(n)})$ as a \textit{pair}, where  $V^{(n)}$ is an $n$-dimensional smooth scheme of finite type, and $\mathcal{G}^{(n)}$ a Rees algebra over $V^{(n)}$. We understand by \textit{basic object} a triple $(V^{(n)},\mathcal{G}^{(n)},E)$, where $(V^{(n)},\mathcal{G}^{(n)})$ is a pair and $E$ is a set of smooth hypersurfaces in $V^{(n)}$ (possibly empty) so that their union has normal crossings. For more details and the definition of transformations and resolution of pairs and basic objects, we refer to \cite{E_V97}.
\end{Rem}

\subsection{Motivation I}
In general, Rees algebras represent a very interesting tool, since many problems in resolution of singularities can be codified by them. We mention here a few examples that may help getting an overall impression of their use.\\

\begin{Ex}\label{ex:resol_hypersurfaces_via_resol_RA} \textbf{Resolution of singularities of a hypersurface:} Consider a hypersurface $X\subset V$. Then $I(X)$ is locally principal. Set $\mathcal{G}=\mathcal{O}_{V}[I(X)W^b]$, where $b$ is the maximum multiplicity of $X$ (see Example \ref{ex:mult}), which we will denote by $\mathrm{max\, mult}(X)$. A resolution of $\mathcal{G}$ as (\ref{diag:res_Rees_algebra}) gives a simplification of the points of multiplicity $b$ of $X$, that is, the induced sequence $X\longleftarrow X_l$ will be such that $\mathrm{max\; mult}(X_l)<b$. One can resolve the singularities of $X$ by iterating this process until $X_r$ is such that $\mathrm{max\; mult}(X_r)=1$.\\
\end{Ex}

\begin{Ex}\label{ex:resol_via_resol_RA}\textbf{Resolution of $\mathcal{G}=\mathcal{O}_{V}[I(X)W]$:} Let $V$ be a smooth scheme over a field of characteristic zero. Let now $X\subset V$ be a closed reduced equidimensional subscheme, defined by $I(X)\subset \mathcal{O}_V$. Let $\mathcal{G}=\mathcal{O}_{V}[I(X)W]$. By Theorem \ref{thm:res_Rees_algebras}, one can construct a resolution of Rees algebras for $\mathcal{G}$:
\begin{equation}
\xymatrix@R=0pt@C=30pt{
V=V_0 & V_1 \ar[l]_>>>>>{\pi _1} & \ldots \ar[l]_{\pi _2} & V_r \ar[l]_{\pi _r}\\
\mathcal{G}=\mathcal{G}_0 & \mathcal{G}_1 \ar[l] & \ldots \ar[l] & \mathcal{G}_r \ar[l]
}\end{equation}
such that $\mathrm{Sing}(\mathcal{G}_r)=\emptyset $, and so that the exceptional locus of $V\longleftarrow V_r$ is a union of smooth hypersurfaces with normal crossings. Let us show now how a resolution of singularities of $X$ can be obtained: For any $i\in \{ 1,\ldots ,r\} $, the transform $I(X)^{(i)}$ of $I(X)$ in $\mathcal{O}_{V_i}$, defined by $I(X)^{(i)}:=I_{1,i}$ as in (\ref{eq:transf_law}), is supported in the exceptional locus (which has normal crossings) as well as in the strict transform of $X$ by $V\longleftarrow V_i$. The condition $\mathrm{Sing}(\mathcal{G}_r)=\emptyset $ implies that the maximum order of $I(X)^{(r)}$ is less than one, so for some $j\in \{ 1,\ldots ,r\} $, the strict transform $X_{j-1}$ of $X$ in $V_{j-1}$ is a connected component of the center of the transform $\pi _j$, and hence is permissible. In particular, this implies that $X_{j-1}$ is regular and has normal crossings with the exceptional divisor. Therefore
\begin{equation}
\xymatrix@R=0pt@C=30pt{
V=V_0 & V_1 \ar[l]_>>>>>{\pi _1} & \ldots \ar[l]_{\pi _2} & V_j \ar[l]_{\pi _j}\\
\cup \; \; \; \; \; \cup & \cup & & \cup \\
X=X_0 & X_1 \ar[l] & \ldots \ar[l] & X_j \ar[l]
}\end{equation}
is a resolution of singularities of $X$ (see \cite[proof of Theorem 1.5]{E_V03} for a precise proof of this result in the language of basic objects).\\
\end{Ex}

\begin{Ex}\textbf{Log-resolution of ideals:} A \textit{Log-resolution} of an ideal $I$ on a smooth scheme $V$ is a proper birational morphism of smooth schemes, say $V'\longrightarrow V$, so that the total transform of $I$, $I\mathcal{O}_{V'}$, is an invertible ideal in $V'$ supported on smooth hypersurfaces having only normal crossings. A resolution of $\mathcal{G}=R[IW]$ gives a Log-resolution of $I$. In \cite{E_V}, Encinas and Villamayor proved, by using Rees algebras, that for two ideals with the same integral closure, one obtains the same algorithmic Log-resolution.\\
\end{Ex}

In this work, we use Rees algebras to give an answer to a problem of computing a sequence of multiplicities. As we will see, we translate our problem into a resolution of some specific Rees algebras (see Section \ref{par:def_Nash_m}).\\

\subsection{Motivation II: local presentations}

When one tries to study certain closed subsets of a variety $X$, one often needs to consider some equations $\{ f_1,\ldots ,f_r\} \subset R$ with weights $\{ n_1,\ldots ,n_r\} \subset \mathbb{Z}_{>0}$ that describe the closed set in question:
$$C=\cap _{i=1}^{r} \{ \eta \in V: \nu _{\eta }(f_i)\geq n_i\} \mbox{,}$$
in a way that the expression is stable under blow ups at suitably chosen centers.
We call such a representation a \textit{local presentation}. Example \ref{ex:resol_hypersurfaces_via_resol_RA} is a particular case of this representation. Let us see another example:\\

\begin{Ex}\label{ex:HS_repr_local_pres} Let $X$ be a variety over a perfect field $k$. Let $\text{HS}(X)$ be the Hilbert-Samuel function on $X$. 
This is an upper semicontinuous function\footnote{Actually, the Hilbert-Samuel function has to be modified in order to be semicontinuous (see \cite{Bennett}). Here we refer to this modification of the Hilbert-Samuel function.}  on $X$, 
$$\begin{array}{rrcl} 
\text{HS}(X): & X & \longrightarrow & ({\mathbb N}^{\mathbb N}, \leq )\\
  & \xi & \mapsto & \text{HS}(X)(\xi),
  \end{array}$$
where ${\mathbb N}^{\mathbb N}$ is ordered lexicographically.
Let $\mathrm{max\, HS}(X)$ and $\mathrm{\underline{Max}\, HS}(X)$ denote the maximum value of $\text{HS}(X)$ in $X$ and the closed subset of points where $\text{HS}(X)$ reaches this value respectively. Pick $\xi \in \mathrm{\underline{Max}\, HS}(X)$. Then (see \cite{Hir1}), it is possible to find, locally in an \'etale neighbourhood of $\xi $, an immersion of $X$ in a smooth scheme $V$ and equations $f_1,\ldots ,f_r$ such that $I(X)=<f_1,\ldots ,f_r>$,
$$\mathrm{\underline{Max}\, HS}(X)=\cap _{i=1}^{r}\mathrm{\underline{Max}\, HS}(\{f_i=0\})\mbox{,}$$
and such that this condition is preserved by blow ups with smooth centers in $\mathrm{\underline{Max}\, HS}(X)$ and by smooth morphisms, in terms of the strict transforms of $X$ and of the $f_i$. Let us translate this  it into  the language of Rees algebras: let $\mathcal{G}=\mathcal{O}_{V,\xi }[f_1W^{\mu _1},\ldots ,f_rW^{\mu _r}]$, where $\mu _i$ is the maximum order of $f_i$ for $i=1,\ldots ,r$. Then
$$\mathrm{Sing}(\mathcal{G})=\mathrm{\underline{Max}\, HS}(X)\mbox{,}$$
and for any sequence of $\mathcal{G}$-permissible transformations
\begin{equation}
\xymatrix@R=0pt@C=30pt{
V=V_0 & V_1 \ar[l]_>>>>>{\pi _1} & \ldots \ar[l]_{\pi _2} & V_l \ar[l]_{\pi _l}\\
\mathcal{G}=\mathcal{G}_0 & \mathcal{G}_1 \ar[l] & \ldots \ar[l] & \mathcal{G}_l \ar[l]\mbox{,}
}\end{equation} 
we have 
$$\mathrm{Sing}(\mathcal{G}_l)=\mathrm{\underline{Max}\, HS}(X_l)\mbox{.}$$
Resolving the Rees algebra $\mathcal{G}$ is equivalent to making $\mathrm{max\, HS}(X)$ decrease.\\
\\
\end{Ex}

The previous example shows that Rees algebras appear as an appropriate language to represent such a set of equations and weights, and allow us to describe the transformations on the subset $C$ we are interested in via well defined transformations of the associated Rees algebra (see (\ref{eq:transf_law})). It is very important to understand to which extent a given algebra can represent $C$. For this purpose we will consider the following transformations:

\begin{Def}\label{def:loc_seq_V}
A \textit{local sequence on} a variety \textit{$V$} is a sequence of morphisms
$$V=V_0\stackrel{\phi _1}{\longleftarrow} V_1\stackrel{\phi _2}{\longleftarrow}\ldots \stackrel{\phi _l}{\longleftarrow} V_l$$
where each $\phi _i$ is either a blow up at a regular center or a smooth morphism, such as   an open inmersion or a projection from a product by some affine space (see for example (\ref{diag:prod_line})).
\end{Def}

\begin{Def}\label{def:loc_seq_G}
Let $\mathcal{G}$ be a Rees algebra over $\mathcal{O}_V$. A \textit{$\mathcal{G}$-local sequence over $V$} is a local sequence over $V$ as in Definition \ref{def:loc_seq_V},
\begin{equation}\label{diag:seq_transf}
\xymatrix@R=0pt@C=30pt{
V=V_0 & V_1 \ar[l]_>>>>>{\phi _1} & \ldots \ar[l]_{\phi _2} & V_l \ar[l]_{\phi _l}\\
\mathcal{G}=\mathcal{G}_0 & \mathcal{G}_1 \ar[l]_>>>>>{\psi _1} & \ldots \ar[l]_{\psi _2} & \mathcal{G}_l \ar[l]_{\psi _l}\mbox{,}}
\end{equation}
such that whenever $\phi _i$ is a blow up, it is in particular a blow up at a permissible center $Y_{i-1}\subset \mathrm{Sing}(\mathcal{G}_{i-1})\subset V_{i-1}$, and then $\mathcal{G}_i$ is the transform of $\mathcal{G}_{i-1}$ by the rule in Definition \ref{def:transf_law}; if $\phi _i$ is a smooth morphism, then $\mathcal{G}_i$ is the pullback of $\mathcal{G}_{i-1}$ by $\phi _i$ (see \cite[Definition 3.2]{Br_G-E_V}).
\end{Def}

\begin{Def}\label{def:tree} Let $\mathcal{G}$ be a Rees algebra over $V$, and consider a $\mathcal{G}$-local sequence over $V$ as in (\ref{diag:seq_transf}). This sequence determines a collection of closed sets, namely $\left\{ \mathrm{Sing}(\mathcal{G}), \mathrm{Sing}(\mathcal{G}_1),\ldots ,\mathrm{Sing}(\mathcal{G}_l)\right\} $. We will refer to this collection (or \textit{branch}) of closed sets as the one defined by or attached to the sequence (\ref{diag:seq_transf}). If we consider all possible $\mathcal{G}$-local sequences over $V$, we obtain a \textit{tree of closed sets} for $\mathcal{G}$, which we denote by $\mathcal{F}_V(\mathcal{G})$ (see \cite[Section 3]{Br_G-E_V}).\\
\end{Def}

For the next examples, let us recall a few concepts and notations:\\

\begin{Notation}
Let $F$ be an upper semicontinuous function defined on varieties, that is, for each variety, there is
\begin{equation}\label{eq:up_semic_funct}F(X)=F_X:X\longrightarrow (\Lambda , \geq ) \mbox{,}\end{equation}
where $(\Lambda ,\geq )$ is a well ordered set. We will denote by max $F(X)$ the maximum value achieved by $F_X$ in $X$. We will use $\underline{\mathrm{Max\, }}F(X)$ to denote the set of points of $X$ in which $F$ achieves this maximum value, that is:
$$\underline{\mathrm{Max\, }}F(X)=\{ \eta \in X:F_X(\eta )\geq \mathrm{max\, }F(X)\} =\{ \eta \in X:F_X(\eta )=\mathrm{max\, }F(X)\} \mbox{.}$$
Note that $\underline{\mathrm{Max\, }}F(X)$ is a closed set.
\end{Notation}

\begin{Def}\label{def:F-loc_seq}
Let $F$ be an upper semicontinuous function defined on varieties. An \textit{$F_X$-local sequence} is a local sequence on $X$ (Definition \ref{def:loc_seq_V}) such that, whenever $\phi _i$ is a blow up, the center is contained in $\underline{\mathrm{Max\, }}F_{X_{i-1}}$.
\end{Def}

\begin{Def}\label{representable}(see \cite[Definition 28.4]{Br_V2})
An upper semicontinuous function $F$ defined on varieties as (\ref{eq:up_semic_funct}) is said to be \textit{representable via local embeddings} if, for each $X$ and each $\xi \in X$, in an \'etale neighbourhood of $\xi $, we can find a closed immersion $X\hookrightarrow V$ and a Rees algebra $\mathcal{G}$ over $\mathcal{O}_{V,\xi }$ such that
\begin{enumerate}
	\item the Rees algebra $\mathcal{G}$ satisfies:
	\begin{equation}\label{eq:condition_G_X:F}
\mathrm{Sing}(\mathcal{G})=\mathrm{\underline{Max}}\, F_X\mbox{;}
\end{equation}
	\item any $F_X$-local sequence 
	\begin{equation}\label{loc_seq_F}X=X_0\leftarrow X_1\leftarrow \ldots \leftarrow X_r\end{equation}
	such that 
	\begin{equation}\label{cond_loc_sec_F}m=\mathrm{max\, }F_X=\mathrm{max\, }F_{X_1}=\ldots =\mathrm{max\, }F_{X_{r-1}}\geq \mathrm{max\, }F_{X_r}\end{equation}
	induces a $\mathcal{G}$-local sequence of Rees algebras over $V$
\begin{align*}
V= & \, V_{0}\leftarrow V_{1}\leftarrow \ldots \leftarrow V_{r}\\
X= & \, X_{0}\leftarrow X_{1}\leftarrow \ldots \leftarrow X_{r}\\
\mathcal{G}= & \, \mathcal{G}_{0}\leftarrow \mathcal{G}_{1}\leftarrow \ldots \leftarrow \mathcal{G}_{r}
\end{align*}
such that for $i=1,\ldots ,r$, 
	$$\mathrm{Sing}(\mathcal{G}_{i})=\left\{ \eta \in X_i:F_{X_i}(\eta )=m\right\} \mbox{,}$$
	 with  $\mathrm{Sing}(\mathcal{G}_r)=\emptyset$ if  and only if $\mathrm{max \; }F_{X_r}<m$ and 
	\item any $\mathcal{G}$-local sequence over $V$ induces an $F_X$-local sequence as (\ref{loc_seq_F}) satisfying (\ref{cond_loc_sec_F}).\\
	\\
\end{enumerate}

\end{Def}

\begin{Ex}\label{ex:HS}\textbf{Hilbert-Samuel}
The results of Hironaka (\cite{Hir}, \cite{Hir1}) show that it is possible to resolve the singularities of a variety (over a perfect field) if we know how to lower the maximum value of the Hilbert-Samuel function of the variety through a finite sequence of blow ups. Then, to construct a resolution of the singularities of a given variety $X$, one just needs to iterate the process a finite number of times.\\
\\
On the other hand, the Hilbert-Samuel function is upper semicontinuous, and it is representable for any variety $X$ via local embeddings (see \cite{Hir1} and Example \ref{ex:HS_repr_local_pres}). Thus, for each point $\xi \in X$ we can find, in an \'etale neighbourhood of $\xi $, an immersion of $X$ into a smooth scheme $V$ and an $\mathcal{O}_{V,\xi }$-Rees algebra $\mathcal{G}_X$ such that $\mathrm{Sing}(\mathcal{G}_{X})=\mathrm{\underline{Max}\, HS}(X)$ and this identity is preserved by $\mathcal{G}$-local sequences over $V$ as long as the maximum value of the Hilbert-Samuel function of $X$ does not decrease. From this, it will follow that finding a sequence of blow ups
$$X=X_0\leftarrow X_1\leftarrow \ldots \leftarrow X_r$$ 
such that $\mathrm{max\, HS}(X_0)=\ldots =\mathrm{max\, HS}(X_{r-1})>\mathrm{max\, HS}(X_r)$ is equivalent to finding a resolution of the Rees algebra $\mathcal{G}_X$.\\
\\
\end{Ex}

A similar statement holds for the multiplicity of a variety defined over a perfect field, see Example \ref{ex3} and \cite{V}:\\

\begin{Ex}\label{ex:mult}\textbf{Multiplicity} 
The multiplicity of an equidimensional variety $X$ at a point $\eta \in X$ is given by an upper semicontinuous function
\begin{align*}
\mathrm{mult} (X): X & \longrightarrow \mathbb{N} \\
\eta & \longmapsto \mathrm{mult} (X)(\eta )=\mathrm{mult}_{\eta }(X)=\mathrm{mult}(\mathcal{O}_{X,\eta })\mbox{}
\end{align*}
where $\mathrm{mult}(\mathcal{O}_{X,\eta })$ stands for the multiplicity of the local ring at the maximal ideal $\mathcal{M}_{\eta }$.
Let $m$ be the maximum multiplicity of $X$. The set
$$\mathrm{\underline{Max}\, mult}(X)=\left\{ \eta \in X:\mathrm{mult}_{\eta }(X)\geq m\right\} =\left\{ \eta \in X:\mathrm{mult}_{\eta }(X)=m\right\} $$
is closed, and the multiplicity is representable via local embeddings for $X$ (see \cite[Proposition 5.7 and Theorem 7.1]{V}).\\
\\
Therefore, just as for the Hilbert-Samuel function in Example \ref{ex:HS}, we can attach a Rees algebra $\mathcal{G}$ to $\mathrm{mult}(X)$ so that resolving $\mathcal{G}$ is equivalent to decreasing the maximum value of $\mathrm{mult}(X)$.
\end{Ex}

By Theorem \ref{thm:res_Rees_algebras}, the resolution for such an algebra can be constructed whenever $k$ is a field of characteristic zero. It is not known if this is true for fields of positive characteristic.

\subsection{Equivalence of Rees algebras}\label{subsec:eq_RA}

Given an upper semicontinuous function $F$ as in (\ref{eq:up_semic_funct}) which is representable via local embeddings, the choice of a Rees algebra satisfying the properties of Definition \ref{representable} is not unique. To begin with, for a given $X$, there are many possible choices for the immersion $X\hookrightarrow V$, but we will mention this problem later. On the other hand, once an immersion is fixed, we can attach a different Rees algebra to a neighbourhood of each point $\xi \in X$. This choice is not unique either. Therefore, given two possible choices of Rees algebras, $\mathcal{G}$ and $\mathcal{G}'$, attached to a fixed point $\xi \in \mathrm{\underline{Max}\, F}(X)$, it would be desirable to compare the algorithmic resolution of $\mathcal{G}$ to that of $\mathcal{G}'$, and vice versa. To deal with this problem, we need the notion of weak equivalence of Rees algebras.

\begin{Def}\cite[Definition 3.5]{Br_G-E_V}
We say that two $\mathcal{O}_{V}$-Rees algebras $\mathcal{G}$ and $\mathcal{H}$ are \textit{weakly equivalent} if:

\begin{enumerate}
	\item $\mathrm{Sing}(\mathcal{G})=\mathrm{Sing}(\mathcal{H})$,
	\item Any $\mathcal{G}$-local sequence over $V$
	$$\mathcal{G}=\mathcal{G}_0\longleftarrow \mathcal{G}_1\longleftarrow \ldots \longleftarrow \mathcal{G}_l$$
	induces an $\mathcal{H}$-local sequence over $V$
	$$\mathcal{H}=\mathcal{H}_0\longleftarrow \mathcal{H}_1\longleftarrow \ldots \longleftarrow \mathcal{H}_l$$
	and vice versa, and moreover the equality in (1.) is preserved, that is
	\item $\mathrm{Sing}(\mathcal{G}_j)=\mathrm{Sing}(\mathcal{H}_j)$ for $j=0,\ldots ,l$.\\
\end{enumerate}

\end{Def}

\begin{Ex}
Let $V$ be a smooth scheme over a field $k$ of characteristic zero. Let $X$ be a hypersurface in $V$. Denote now by $b$ the maximum multiplicity of $X$. Then, locally at each point, there exists a Rees algebra $\mathcal{G}$ representing $\mathrm{mult}(X)$ via local embeddings (see Example \ref{ex:HS_repr_local_pres}, Definition \ref{representable} and Example \ref{ex3}). This algebra $\mathcal{G}$ is unique up to weak equivalence.\\
\end{Ex}

The following definitions and results give a flavour of what this equivalence relation means:

\begin{Def}
A Rees algebra over $V$, $\mathcal{G}=\oplus _{n\geq 0}I_nW^n$ is \textit{integrally closed} if it is integrally closed as an $\mathcal{O}_V$-ring in $\mathrm{Quot}(\mathcal{O}_V)[W]$. We denote by $\overline{\mathcal{G}}$ the integral closure of $\mathcal{G}$.
\end{Def}

\begin{Def}
Two Rees algebras are \textit{integrally equivalent} if their integral closure in $\mathrm{Quot}(\mathcal{O}_V)[W]$ coincides.
\end{Def}

\begin{Def}
A Rees algebra $\mathcal{G}=\oplus _{n\geq 0}I_nW^n$ over $V$ is \textit{differentially closed} (or a \textit{Diff-algebra}) if there is an affine open covering of $V$, $\{ U_i\} $ such that for every $D\in \mathrm{Diff}^{(r)}(U_i)$ and $h\in I_n(U_i)$, we have $D(h)\in I_{n-r}(U_i)$ whenever $n\geq r$, where $\mathrm{Diff}^{(r)}(U_i)$ is the locally free sheaf of $k$-linear differential operators of order $r$ or less. In particular, $I_{n+1}\subset I_n$ for $n\geq 0$. We denote by $\mathrm{Diff}(\mathcal{G})$ the smallest differential Rees algebra containing $\mathcal{G}$ (its \textit{differential closure}). (See \cite[Theorem 3.4]{V07} for the construction.)
\end{Def}

\begin{Thm}\cite[Theorem 3.11]{Br_G-E_V}
Let $\mathcal{G}_1$ and $\mathcal{G}_2$ be two Rees algebras over $V$. Then $\mathcal{G}_1$ and $\mathcal{G}_2$ are weakly equivalent if and only if $\overline{\mathrm{Diff}(\mathcal{G}_1)}=\overline{\mathrm{Diff}(\mathcal{G}_2)}$.
\end{Thm}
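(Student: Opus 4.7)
The strategy is to reduce the equivalence to two independent claims: that the operations $\mathcal{G}\rightsquigarrow \mathrm{Diff}(\mathcal{G})$ and $\mathcal{G}\rightsquigarrow \overline{\mathcal{G}}$ both preserve the tree $\mathcal{F}_V(\mathcal{G})$ of Definition \ref{def:tree}, and conversely that weak equivalence pins down $\overline{\mathrm{Diff}(\mathcal{G})}$ uniquely. I would organise the proof as $(\Leftarrow)$, which is the direct implication built from transitivity of weak equivalence, and $(\Rightarrow)$, which is the substantive content.

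\textbf{Direction $(\Leftarrow)$.} I would prove two key lemmas: (i) $\mathcal{G}$ is weakly equivalent to $\mathrm{Diff}(\mathcal{G})$; (ii) $\mathcal{G}$ is weakly equivalent to $\overline{\mathcal{G}}$. For (i), the inclusion $\mathcal{G}\subset \mathrm{Diff}(\mathcal{G})$ immediately gives $\mathrm{Sing}(\mathrm{Diff}(\mathcal{G}))\subset \mathrm{Sing}(\mathcal{G})$, while the reverse inclusion follows from the estimate $\nu_\xi(D(f))\geq \nu_\xi(f)-r$ for every $k$-linear differential operator $D$ of order $\leq r$. To propagate this equality through a local sequence, I would check that both the Diff-closure and the transform rule (\ref{eq:transf_law}) commute: for smooth morphisms this is immediate by base change of the sheaf of differentials, and for a permissible blow up one uses the standard fact (going back to Giraud and revisited in \cite{V07}) that the transform of a differentially closed Rees algebra is again differentially closed. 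Since a permissible center for $\mathcal{G}$ is the same as a permissible center for $\mathrm{Diff}(\mathcal{G})$ (they share the singular locus), any $\mathcal{G}$-local sequence is a $\mathrm{Diff}(\mathcal{G})$-local sequence and vice versa, with singular loci agreeing at each step. For (ii), if $f\in \overline{\mathcal{G}}$ satisfies an integral equation $f^m + a_1 f^{m-1}+\cdots + a_m=0$ with $a_i$ of weight $in$ in $\mathcal{G}$, then $\nu_\xi(a_i)\geq in$ on $\mathrm{Sing}(\mathcal{G})$ forces $\nu_\xi(f)\geq n$, giving $\mathrm{Sing}(\mathcal{G})=\mathrm{Sing}(\overline{\mathcal{G}})$; and integral closure commutes with both pullbacks along smooth morphisms and with the transform rule, so the equality is preserved along local sequences. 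Combining (i) and (ii) we get $\mathcal{G}\sim \overline{\mathrm{Diff}(\mathcal{G})}$, and since weak equivalence is transitive, if the right-hand sides coincide for $\mathcal{G}_1$ and $\mathcal{G}_2$ then $\mathcal{G}_1\sim \mathcal{G}_2$.

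\textbf{Direction $(\Rightarrow)$.} This is where the real content lies. Assume $\mathcal{G}_1\sim \mathcal{G}_2$; by the previous paragraph we may replace each $\mathcal{G}_i$ with $\overline{\mathrm{Diff}(\mathcal{G}_i)}$, so it suffices to show: if $\mathcal{H}_1$ and $\mathcal{H}_2$ are both integrally closed Diff-algebras that are weakly equivalent, then $\mathcal{H}_1=\mathcal{H}_2$. The plan is to characterise membership in an integrally closed Diff-algebra purely in terms of the tree $\mathcal{F}_V(-)$. Concretely, I would prove: \emph{an element $fW^n$ lies in $\overline{\mathrm{Diff}(\mathcal{H})}$ if and only if, along every $\mathcal{H}$-local sequence $V=V_0\leftarrow \cdots \leftarrow V_\ell$ and every $\xi_\ell\in \mathrm{Sing}(\mathcal{H}_\ell)$, the weak transform of $f$ has order $\geq n$ at $\xi_\ell$.} The ``only if'' direction uses that differential operators and integral equations both control orders along blow ups and smooth pullbacks, exactly as in direction $(\Leftarrow)$. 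The ``if'' direction is the heart of the matter: one takes an $fW^n$ with the claimed order property and uses a combination of point blow ups and restrictions to sufficiently general arcs through points of $\mathrm{Sing}(\mathcal{H})$ to extract enough order information to force $f$ into $\overline{\mathrm{Diff}(\mathcal{H})}$. The main inputs are Hironaka's criterion for integral closure of Rees algebras in terms of orders along all valuations centred on $\mathrm{Sing}$, together with the fact that, in characteristic zero, Diff-closure in a regular ambient translates valuative order bounds into algebraic membership. Once this characterisation is in place, weak equivalence $\mathcal{H}_1\sim \mathcal{H}_2$ yields the same membership criterion for both, so $\mathcal{H}_1=\mathcal{H}_2$.

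\textbf{Expected obstacle.} The genuine difficulty is the ``if'' half of the characterisation in direction $(\Rightarrow)$: given order control along all local sequences, one must build an algebraic certificate (a finite combination of differential operators and an integral equation) witnessing that $fW^n\in \overline{\mathrm{Diff}(\mathcal{H})}$. This is the step that crucially uses characteristic zero and Hironaka's results on order functions, and is what makes the theorem nontrivial; it is also where I would expect to cite \cite{V07} and the analogous statements from \cite{Br_G-E_V} rather than reprove the valuative criterion from scratch.
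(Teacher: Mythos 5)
First, a point of comparison: the paper does not prove this statement at all --- it is imported verbatim from \cite[Theorem 3.11]{Br_G-E_V}, so the only benchmark is the proof in that source. Measured against it, your plan reproduces the actual architecture faithfully: the direction $(\Leftarrow)$ is obtained exactly as you say, by showing that $\mathcal{G}\rightsquigarrow\mathrm{Diff}(\mathcal{G})$ and $\mathcal{G}\rightsquigarrow\overline{\mathcal{G}}$ preserve the tree $\mathcal{F}_V(\mathcal{G})$ and invoking transitivity, and the direction $(\Rightarrow)$ is indeed settled there by a characterisation of $\overline{\mathrm{Diff}(\mathcal{G})}$ in terms of order data read along all local sequences (Hironaka's trick plus a valuative criterion for integral closure of Rees algebras). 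Your integral-closure argument for $\mathrm{Sing}(\mathcal{G})=\mathrm{Sing}(\overline{\mathcal{G}})$ via the homogeneous integral equation is correct as written.

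Two caveats. The first is a genuine misstatement in your $(\Leftarrow)$: the transform of a differentially closed Rees algebra is in general \emph{not} differentially closed, and Giraud's lemma does not say that. What Giraud gives (and what \cite{V07} uses) is the inclusion $(\mathrm{Diff}(\mathcal{G}))_1\subset\mathrm{Diff}(\mathcal{G}_1)$, i.e., the transform of the closure sits inside the closure of the transform; combined with the pointwise equality $\mathrm{Sing}(\mathrm{Diff}(\mathcal{H}))=\mathrm{Sing}(\mathcal{H})$ this yields $\mathrm{Sing}(\mathcal{G}_1)=\mathrm{Sing}((\mathrm{Diff}(\mathcal{G}))_1)$, which is all your lemma (i) needs --- so the step is repairable by a standard substitution, but as stated your premise is false. (The same remark applies, more mildly, to your claim that integral closure ``commutes'' with the transform rule (\ref{eq:transf_law}): one only has an inclusion up to integral closure, which again suffices.) The second caveat is one of completeness rather than correctness: the ``if'' half of your membership characterisation --- manufacturing an algebraic certificate that $fW^n\in\overline{\mathrm{Diff}(\mathcal{H})}$ from order bounds along all sequences --- is precisely the substantive content of the cited theorem, and you defer it wholesale to Hironaka's criterion and to \cite{Br_G-E_V} itself. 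You identify the obstacle honestly and point at the right inputs, so this is a correct roadmap of the known proof rather than a self-contained argument; since the paper under review likewise treats the statement as a black box, nothing in the paper's use of the theorem is put at risk by this.
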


\begin{Cor}\label{cor:ord_weak_eq}
Let $\mathcal{G}_1$ and $\mathcal{G}_2$ be two weakly equivalent Rees algebras over $V$. Then for all $\eta \in \mathrm{Sing}(\mathcal{G}_1)=\mathrm{Sing}(\mathcal{G}_2)$, we have $\mathrm{ord}_{\eta }\mathcal{G}_1=\mathrm{ord}_{\eta }\mathcal{G}_2$.
\end{Cor}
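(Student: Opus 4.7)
The plan is to reduce the corollary to two preservation statements: that the order is invariant under both integral closure and differential closure of Rees algebras. Indeed, by the theorem immediately preceding the corollary, weak equivalence of $\mathcal{G}_1$ and $\mathcal{G}_2$ is equivalent to $\overline{\mathrm{Diff}(\mathcal{G}_1)}=\overline{\mathrm{Diff}(\mathcal{G}_2)}$, so once I know that the order at $\eta$ does not change when one replaces a Rees algebra by its differential closure or by its integral closure, the chain
$$\mathrm{ord}_\eta\mathcal{G}_1=\mathrm{ord}_\eta\mathrm{Diff}(\mathcal{G}_1)=\mathrm{ord}_\eta\overline{\mathrm{Diff}(\mathcal{G}_1)}=\mathrm{ord}_\eta\overline{\mathrm{Diff}(\mathcal{G}_2)}=\mathrm{ord}_\eta\mathrm{Diff}(\mathcal{G}_2)=\mathrm{ord}_\eta\mathcal{G}_2$$
yields the conclusion.

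For the integral closure, the inclusion $\mathcal{G}\subset\overline{\mathcal{G}}$ gives one inequality automatically (the infimum over a larger set is at most the infimum over a smaller one). For the reverse, take $fW^n\in\overline{\mathcal{G}}$ and an integral equation $f^r+c_1f^{r-1}+\cdots+c_r=0$ with $c_iW^{in}\in\mathcal{G}$. Comparing $\nu_\eta$-orders of the terms and using $\nu_\eta(c_i)\geq in\cdot\mathrm{ord}_\eta(\mathcal{G})$, at least one index $i$ must satisfy $r\nu_\eta(f)\geq in\cdot\mathrm{ord}_\eta(\mathcal{G})+(r-i)\nu_\eta(f)$, giving $\nu_\eta(f)/n\geq\mathrm{ord}_\eta(\mathcal{G})$.

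For the differential closure, again one inequality is immediate from $\mathcal{G}\subset\mathrm{Diff}(\mathcal{G})$. For the other, elements of $\mathrm{Diff}(\mathcal{G})$ are generated by expressions $D(f)W^{n-r}$ with $D\in\mathrm{Diff}^{(r)}$ and $fW^n\in\mathcal{G}$; since a differential operator of order $r$ can lower the $\mathfrak{m}_\eta$-adic order by at most $r$, one has $\nu_\eta(D(f))\geq\nu_\eta(f)-r\geq n\lambda-r$, where $\lambda:=\mathrm{ord}_\eta(\mathcal{G})$. The crucial point I need to exploit is that $\eta\in\mathrm{Sing}(\mathcal{G})$ forces $\lambda\geq 1$, so $n\lambda-r\geq(n-r)\lambda$, and thus $\mathrm{ord}_\eta(D(f)W^{n-r})\geq\lambda$ for every generator of $\mathrm{Diff}(\mathcal{G})$. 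By the theorem computing the order on generators (Proposition 6.4.1 of \cite{E_V}, stated above), this gives $\mathrm{ord}_\eta\mathrm{Diff}(\mathcal{G})\geq\mathrm{ord}_\eta\mathcal{G}$.

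The one step that requires genuine care, rather than being a bookkeeping manipulation, is the differential-closure invariance: the inequality $\nu_\eta(D(f))\geq\nu_\eta(f)-r$ only beats the required lower bound because we are in the singular locus, where $\lambda\geq 1$. Outside $\mathrm{Sing}(\mathcal{G})$ the statement would fail, so the hypothesis $\eta\in\mathrm{Sing}(\mathcal{G}_1)=\mathrm{Sing}(\mathcal{G}_2)$ is used in an essential way precisely at this point. Everything else is a direct concatenation of the equalities above with the cited characterization of weak equivalence.
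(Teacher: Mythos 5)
Your proposal is correct and takes essentially the route the paper intends: the corollary is stated there without proof, as an immediate consequence of the preceding theorem (weak equivalence $\Leftrightarrow$ $\overline{\mathrm{Diff}(\mathcal{G}_1)}=\overline{\mathrm{Diff}(\mathcal{G}_2)}$) combined with the standard facts that $\mathrm{ord}_{\eta }$ is unchanged by passing to the integral or differential closure at points of the singular locus, which are exactly the two lemmas you supply. Both of your invariance arguments are sound --- including the key observation that $\mathrm{ord}_{\eta }\mathcal{G}\geq 1$ on $\mathrm{Sing}(\mathcal{G})$ is what makes the bound $\nu _{\eta }(D(f))\geq n\lambda -r\geq (n-r)\lambda $ work, and the (implicit, standard) fact that a homogeneous element of $\overline{\mathcal{G}}$ satisfies an equation of integral dependence with homogeneous coefficients $c_iW^{in}\in \mathcal{G}$.
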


\begin{Cor}
Let $\mathcal{G}_1$ and $\mathcal{G}_2$ be two weakly equivalent Rees algebras. Then a constructive resolution of $\mathcal{G}_1$ induces a constructive resolution of $\mathcal{G}_2$ and vice versa (see \cite[Remark 11.8]{Br_V2}).
\end{Cor}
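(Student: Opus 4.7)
The plan is to show that the entire constructive resolution algorithm depends on $\mathcal{G}$ only through invariants that are preserved under weak equivalence, so two weakly equivalent algebras produce identical sequences of centers. The key fact in the background is that the algorithm of \cite{E_V97, Br_V2} chooses centers via an upper semicontinuous resolution function whose primary coordinate is Hironaka's order $\mathrm{ord}_{\xi}(\mathcal{G})$ and whose remaining coordinates are built recursively from orders of auxiliary algebras obtained by elimination, restriction to open sets and pullback.

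First I would record that weak equivalence is stable under $\mathcal{G}$-local sequences: by the very definition, a $\mathcal{G}_1$-local sequence is also a $\mathcal{G}_2$-local sequence with the same underlying morphisms of $V$, and at every stage $i$ one has $\mathrm{Sing}(\mathcal{G}_{1,i}) = \mathrm{Sing}(\mathcal{G}_{2,i})$. Moreover, the transforms $\mathcal{G}_{1,i}$ and $\mathcal{G}_{2,i}$ are themselves weakly equivalent, because any $\mathcal{G}_{1,i}$-local sequence can be prepended by the initial sequence to produce a $\mathcal{G}_1$-local sequence, and by weak equivalence of the original pair this is a $\mathcal{G}_2$-local sequence, whose tail is then a $\mathcal{G}_{2,i}$-local sequence with matching singular loci. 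Hence weak equivalence propagates through the resolution process.

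Next I would invoke Corollary \ref{cor:ord_weak_eq} at every stage to conclude that $\mathrm{ord}_{\eta}\mathcal{G}_{1,i} = \mathrm{ord}_{\eta}\mathcal{G}_{2,i}$ for all $\eta \in \mathrm{Sing}(\mathcal{G}_{1,i}) = \mathrm{Sing}(\mathcal{G}_{2,i})$. For the secondary invariants of the algorithm, one applies the same reasoning in lower dimensions: if $\mathcal{G}_1$ and $\mathcal{G}_2$ are weakly equivalent on $V$, then $\overline{\mathrm{Diff}(\mathcal{G}_1)} = \overline{\mathrm{Diff}(\mathcal{G}_2)}$ by \cite[Theorem 3.11]{Br_G-E_V}, so their elimination algebras on any suitably transverse smooth subscheme coincide up to weak equivalence as well, and therefore their orders (the secondary coordinates of the resolution function) also agree. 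By induction on the lexicographic resolution function, the maximum stratum, and hence the smooth center selected at step $i$, is the same for $\mathcal{G}_1$ and $\mathcal{G}_2$.

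Thus a constructive resolution of $\mathcal{G}_1$ prescribes, at each step, exactly the same permissible center as the constructive resolution of $\mathcal{G}_2$, and by stability of weak equivalence (step one above) $\mathrm{Sing}(\mathcal{G}_{1,r}) = \emptyset$ if and only if $\mathrm{Sing}(\mathcal{G}_{2,r}) = \emptyset$; the normal-crossing condition on the exceptional divisor is automatic since the ambient sequence of blow-ups is identical. The main obstacle is the inductive step verifying that the secondary invariants used to break ties in the resolution function also depend only on the weak equivalence class; this reduces to checking that the elimination construction respects weak equivalence, which is essentially the content of $\overline{\mathrm{Diff}(\mathcal{G}_1)} = \overline{\mathrm{Diff}(\mathcal{G}_2)}$ together with standard properties of elimination algebras from \cite{Br_V2}.
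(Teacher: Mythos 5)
Your argument is correct and takes essentially the same route as the paper, which gives no proof beyond the citation to \cite[Remark 11.8]{Br_V2} but later justifies the statement (in the Remark of Section \ref{subsec:alg_res}) exactly as you do: every coordinate of the resolution function derives from Hironaka's order function, which is a weak-equivalence invariant by Corollary \ref{cor:ord_weak_eq}. Your explicit steps---that weak equivalence propagates through transforms via the concatenation of local sequences, and that elimination respects weak equivalence since $\overline{\mathrm{Diff}(\mathcal{G}_1)}=\overline{\mathrm{Diff}(\mathcal{G}_2)}$---are a sound filling-in of the details the paper delegates to the references.
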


\begin{Rem}
Let $X$ be a variety, and fix an immersion $X\hookrightarrow V$. Any two local presentations of $X$ attached to the multiplicity or to the Hilbert-Samuel function are weakly equivalent by definition, and therefore Corollary \ref{cor:ord_weak_eq} applies: fixed an immersion for $X$, the order of a Rees algebra attached to a local presentation at any point of its singular locus does not depend on the local presentation, and neither does the resolution. The previous results give an answer to the problem of compatibility of Rees algebras over $V$.
\end{Rem}

\subsection{Elimination algebras}\label{subsec:elim}

In the following examples, one can observe that, in some cases, the relevant information regarding the simplification of the multiplicity of a variety $X^{(d)} \hookrightarrow V^{(n)}$ can be reflected in a lower dimensional version of $V^{(n)}$. In order to generalize this idea, we have the concept of elimination, which we introduce next.

\subsubsection*{Example I: Hypersurface case}
\addcontentsline{toc}{subsubsection}{Example I: Hypersurface case}

\begin{Ex}\label{ex1} 
Let $S$ be a regular $d$-dimensional $k$-algebra of finite type, with $d>0$. Let $V^{(n)}=\mathrm{Spec}(S[x])$, where $n=d+1$. There is an injective morphism
$$S\stackrel{\beta ^*}{\longrightarrow } S[x]\mbox{,}$$
and an induced smooth projection
\begin{equation}\label{ex:proj}
V^{(n)}\stackrel{\beta}{\longrightarrow }V^{(d)}=\mathrm{Spec}(S)\mbox{.}
\end{equation}
Let $X$ be a hypersurface in $V^{(n)}$, $X=\mathrm{Spec}(S[x]/f(x))$, where $f$ is a polynomial in $x$ of degree $b>1$ with coefficients in $S$. Let $\xi ^{(n)}$ be a point in the closed set of multiplicity $b$ of $X$. We can suppose that the maximal ideal $\mathcal{M}_{\xi ^{(n)}}$ of $\xi ^{(n)}$ in $S[x]$ is given by $<x,z_1,\ldots ,z_d>$ for a regular system of parameters $\left\{ z_1,\ldots ,z_d\right\} $ in $S$. The image $\xi ^{(d)}$ of $\xi ^{(n)}$ by the projection (\ref{ex:proj}) is defined by the maximal ideal $\mathcal{M}_{\xi ^{(d)}}=<z_1,\ldots ,z_d>$. Then, the Rees algebra $\mathcal{G}_X^{(n)}$ over $S[x]$ 
$$\mathcal{G}_X^{(n)}=\mathrm{Diff}(S[x][fW^{b}])\subset S[x][W]\mbox{}$$
represents the multiplicity function on $X$ locally at $\xi ^{(n)}$. \\
\\
\end{Ex}
Let us suppose that, in addition, $f$ has the form of Tschirnhausen:
\begin{equation}\label{ec:f_Tsch}
f(x)=x^b+B_{b-2}x^{b-2}+\ldots + B_ix^i+\ldots +B_0\in S[x]\mbox{,}
\end{equation}
where $B_i\in S$ for $i=0,\ldots ,b-2$ and\footnote{For simplicity, we will sometimes write $\xi $ when we refer to the image of $\xi ^{(n)}$ by most of the maps we use in  this article. In particular, we will often write $\mathrm{ord}_{\xi }$ meaning $\mathrm{ord}_{\xi ^{(n)}}$ or $\mathrm{ord}_{\xi ^{(d)}}$.} $\mathrm{ord}_{\xi }(B_i)\geq b-i$. \\
\\
The following lemma shows that for $X$ as in Example \ref{ex1}, the meaningful part of $f\in S[x]$ (regarding the multiplicity) is given by the coefficients $B_i$, which are already in $S$.

\begin{Lemma}\label{lema:generadores_algebra}
	Let $X$ be given by $f$ as in (\ref{ec:f_Tsch}). Then
	$$\mathcal{G}_X^{(n)}=S[x][xW]\odot \mathrm{Diff}(S[x][B_{b-2}W^2,\ldots ,B_iW^{b-i},\ldots ,B_0W^b])\mbox{.}$$
\end{Lemma}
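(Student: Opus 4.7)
The plan is to prove the equality by a two-way inclusion, exploiting the Tschirnhausen form of $f$ so that differential operators in $x$ cleanly separate the ``$x$-part'' from the ``coefficient part'' $B_i$.

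For the inclusion $\supseteq$, I would first extract $xW$ by applying $\tfrac{1}{(b-1)!}\partial^{b-1}/\partial x^{b-1}$, a differential operator of order $b-1$, to $fW^b$: since the $x^{b-1}$ coefficient of $f$ vanishes (Tschirnhausen), the result is $b\cdot x$, so $xW \in \mathrm{Diff}(S[x][fW^b])$. Next I would show by descending induction on $i$ (from $i=b-2$ down to $0$) that $B_iW^{b-i} \in \mathrm{Diff}(S[x][fW^b])$: applying $\tfrac{1}{i!}\partial^i/\partial x^i$ to $fW^b$ (an operator of order $i$) yields an element of the form $\binom{b}{i}x^{b-i}W^{b-i}+\sum_{j>i}\binom{j}{i}B_j x^{j-i}W^{b-i}+B_iW^{b-i}$; all preceding terms already belong to $\mathcal{G}_X^{(n)}$ because $(xW)^{b-i}$ and, by the inductive hypothesis, $(xW)^{j-i}\cdot B_jW^{b-j}$ do. Subtracting, $B_iW^{b-i}$ lands in $\mathcal{G}_X^{(n)}$. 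Since $\mathcal{G}_X^{(n)}$ is differentially closed, the whole $\mathrm{Diff}(S[x][B_{b-2}W^2,\ldots,B_0W^b])$ sits inside it, and together with $xW$ this gives the containment $\supseteq$.

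For the inclusion $\subseteq$, the key identity is the ``Taylor expansion'' of $f$. I would note first that
\[
fW^b \;=\; (xW)^b+\sum_{i=0}^{b-2}(xW)^i\cdot B_iW^{b-i},
\]
which already exhibits $fW^b$ as a product of generators on the right-hand side. Then, for an arbitrary differential operator $D=\partial_x^j\,\partial_z^{\underline{k}}$ of order $j+|k|\leq b$, an analogous Taylor-type expansion gives
\[
\tfrac{1}{j!}D(f)\,W^{b-j-|k|} \;=\; \sum_{i\geq j}\binom{i}{j}(xW)^{i-j}\cdot \partial_z^{\underline{k}}(B_i)\,W^{b-i-|k|},
\]
where each factor $\partial_z^{\underline{k}}(B_i)W^{b-i-|k|}$ belongs to $\mathrm{Diff}(S[x][B_{b-2}W^2,\ldots,B_0W^b])$ by the definition of differential closure applied in the ring $S$, and each $(xW)^{i-j}$ belongs to $S[x][xW]$. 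Hence every generator of $\mathrm{Diff}(S[x][fW^b])$ lies in $S[x][xW]\odot \mathrm{Diff}(S[x][B_{b-2}W^2,\ldots,B_0W^b])$.

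The main obstacle I expect is careful bookkeeping of weights in the Taylor expansion so that the decomposition respects the grading, and the verification that purely $S$-linear operators on $\mathcal{G}_X^{(n)}$ factor through operators on the coefficient subalgebra (i.e.\ that $\partial_z$ commutes cleanly with the extraction of the $B_i$'s from $f$, which is immediate because the $B_i$'s lie in $S$). Once these expansions are written out, both inclusions follow formally from the compatibility of $\mathrm{Diff}$ with the $\odot$-operation and from the definition of the differential closure.
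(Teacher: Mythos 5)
Your proposal is correct and follows essentially the same route as the paper: both extract $xW$ from the $(b-1)$-th $x$-derivative of $fW^b$ and then obtain each $B_iW^{b-i}$ by differentiating in $x$ and subtracting terms already known to lie in $\mathcal{G}_X^{(n)}$ (the paper organizes this as successive remainders $f_2W^b, f_3W^b,\ldots$ rather than your descending induction, a cosmetic difference). Your explicit Taylor-expansion check of the inclusion $\subseteq$ merely spells out what the paper compresses into the remark that the algebra generated by $xW$ and the $B_iW^{b-i}$ is already differentially closed with respect to $x$, so its differential closure recovers $\mathcal{G}_X^{(n)}$.
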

	
\begin{proof} In order to compute the differential closure of $S[x][fW^b]$, let us start by computing the $(b-1)$-th derivative of $fW^b$ with respect to $x$: one can see that $xW\in \mathcal{G}_X^{(n)}$. Therefore $f_2W^b=fW^b-(xW)^b\in \mathcal{G}_X^{(n)}$ and, if we consider $xW$ and $f_2W^b$ among the generators of $\mathcal{G}_X^{(n)}$, there is no need to include $fW^b$. To continue, we compute the $(b-2)$-th derivative of $f_2W^b$ with respect to $x$ obtaining, up to a nonzero constant, $B_{b-2}W^2\in \mathcal{G}_X^{(n)}$. Just like in the previous step, it is possible to verify that $f_3W^b=f_2W^b-(B_{b-2}W^2)(xW)^{b-2}\in \mathcal{G}_X^{(n)}$, and that $f_2W^b$ can be generated by $xW$, $B_{b-2}W^2$ and $f_3W^b$. By iterating this argument, one  concludes that the set consisting of $xW$ and $B_iW^{b-i}$ for $i=0,\ldots b-2$ is contained in $\mathcal{G}_X^{(n)}$ and, in addition, the differential closure of the $S[x]$-Rees algebra generated by this set\footnote{Note that it is already differentially closed with respect to $x$.} corresponds exactly to $\mathcal{G}_X^{(n)}$.\\
\end{proof}

\begin{Ex}\label{ex2}Instead of (\ref{ec:f_Tsch}), suppose now that $f$ is of the form
\begin{equation}\label{ec:f_Weierstr}
f(x)=x^b+D_{b-1}x^{b-1}+\ldots + D_ix^i+\ldots +D_0\in S[x]\mbox{,}
\end{equation}
where $D_i\in S$, $D_{b-1}\neq 0$ and $\mathrm{ord}_{\xi }(D_i)\geq b-i$ for $i=0,\ldots b-1$. After a suitable change, namely $\tilde{x}=x+\frac{D_{b-1}}{b}$, we obtain
$$f(x)=\tilde{f}(\tilde{x})=\tilde{x}^b+B_{b-2}\tilde{x}^{b-2}+\ldots +B_0\in S[\tilde{x}]\mbox{,\; }B_i\in S\mbox{,\; }\mathrm{ord}_{\xi }(B_i)\geq b-i\mbox{.}$$
\end{Ex}

\vspace{0.2cm}

\subsubsection*{Example II: Multiplicity of a variety}
\addcontentsline{toc}{subsubsection}{Example II: Multiplicity of a variety}

\begin{Ex}\label{ex3}(see \cite[7.1]{V}) 
Let $X$ be a variety of dimension $d$ over $k$ of maximum multiplicity $b$, and let $\xi \in X$ be a point in $\mathrm{\underline{Max}\, mult}(X)$. We have, after possibly replacing $X$ by an \'etale neighbourhood of $\xi $, a smooth $k$-algebra $S$ of dimension $d$ and a finite and \textit{transversal} projection
\begin{equation}\label{eq:beta_X}\beta _X :X\longrightarrow \mathrm{Spec}(S)=V^{(d)}\mbox{,}\end{equation}
that is, a finite projection of generic rank $b$. Note that $\beta _X$ induces a homeomorphism between $\mathrm{\underline{Max}\, mult}(X)$ and its image (\cite[Appendix A]{Br_V2}, \cite[4.8]{V}), and an injective finite morphism
$$S\longrightarrow B=S[\theta _1,\ldots ,\theta _{n-d}]\cong S[x_1,\ldots ,x_{n-d}]/I(X)\mbox{.}$$
As a consequence, we have a local immersion of $X$ in a smooth $n$-dimensional space 
$$V^{(n)}=\mathrm{Spec}(S[x_1,\ldots ,x_{n-d}])$$
in a neighbourhood of $\xi $, and it can be shown that there exist $f_1,\ldots ,f_{n-d}\in I(X)\subset S[x_1,\ldots ,x_{n-d}]$ such that for some positive integers $b_1,\ldots ,b_{n-d}$ the Rees algebra
\begin{equation}\label{loc_pres}
\mathcal{G}_{X}^{(n)}=\mathrm{Diff}(\mathcal{O}_{V^{(n)},\xi }[f_1W^{b_1},\ldots ,f_{n-d}W^{b_{n-d}}])
\end{equation}
represents $\mathrm{mult}(X):X\longrightarrow \mathbb{N}$ locally at $\xi $. In addition, for $i=1,\ldots ,n-d$, 
\begin{equation}\label{sep_var}
f_i\in S[x_i]
\end{equation}
and it is the minimal polynomial of $\theta _i$ over $S$ (see \cite[7.1]{V} for more details). Note that $S[x_1,\ldots ,x_{n-d}]\longrightarrow B\cong S[x_1,\ldots ,x_{n-d}]/I(X)$ is a surjective map and that for any $i=1,\ldots n-d$ the following diagram commutes:
\begin{equation}\label{diag:fact_hyp}
\xymatrix{
S[x_1,\ldots ,x_{n-d}] \ar[r] & S[x_1,\ldots ,x_{n-d}]/(f_1,\ldots ,f_{n-d}) \ar[r] & B \ar[r] & 0\\ 
S[x_i] \ar[u] \ar[r] & S[x_i]/(f_i) \ar[u] & & \\
S \ar[u] \ar[ur] & & & \\
}
\end{equation}
Due to (\ref{sep_var}), we can perform changes of variables for all of the $x_i$ as in \ref{ex2} in order to obtain an expression as in (\ref{ec:f_Tsch}) for each of the $f_i$. We will therefore assume that, when we consider a local presentation attached to the multiplicity for $X$ as (\ref{loc_pres}), the $f_i$ have the form of Tschirnhausen.

\begin{Rem}
In the particular case in which, locally at $\xi $, $B=S[\theta _1]$, necessarily $I(X)=(f_1)$, $B\cong k[x_1]/(f_1)$, and hence $X$ is a hypersurface in $V^{(n)}$.\\
\end{Rem}

\end{Ex}

Given an $n$-dimensional smooth scheme of finite type $V^{(n)}$, and a Rees algebra $\mathcal{G}^{(n)}$ over $V^{(n)}$, which we will consider as a pair from now on, it would be useful to find a new pair $(V^{(n-e)},\mathcal{G}^{(n-e)})$ of dimension $n-e<n$, so that a resolution of $\mathcal{G}^{(n-e)}$ induces a resolution of $\mathcal{G}^{(n)}$, since the first one could be easier to find.

\begin{Def}\label{def:elim_alg}
Let $\mathcal{G}^{(n)}$ be a differential Rees algebra over $V^{(n)}$, and let $\xi \in \mathrm{Sing}(\mathcal{G}^{(n)})$ be a closed point. For  a suitable\footnote{No larger than the invariant $\tau $ at $\xi $, see \cite{B} for more details.} $e\geq 1$  and a smooth transversal\footnote{This condition just means that the intersection of $\mathrm{Ker}(d\beta )$ and the tangent space of $\mathcal{G}^{(n)}$ at $\xi $ is $0$. This guarantees that $\beta $ induces a homeomorphism between $\mathrm{Sing}(\mathcal{G}^{(n)})$ and $\beta (\mathrm{Sing}(\mathcal{G}^{(n)}))$.} projection (also admissible\footnote{For this, it suffices to have $\mathcal{G}^{(n)}$ differentially closed with respect to $\beta $, that is, closed under the action of the sheaf of relative differential opperators $\mathrm{Diff}_{V^{(n)}/V^{(n-e)}}$.}),
$$\beta :V^{(n)}\longrightarrow V^{(n-e)}$$
in a neighbourhood of $\xi $, we define an \textit{elimination algebra} $\mathcal{G}^{(n-e)}$ of   $\mathcal{G}^{(n)}$ as $\mathcal{G}^{(n)}\cap \mathcal{O}_{V^{(n-e)}}$ up to integral closure.
\end{Def}

For a complete description of the properties asked to the projections, and of elimination algebras, we refer to \cite{Br_V}, \cite{Br_V1}, \cite[16 and Appendix A]{Br_V2}, \cite{V} and \cite[Theorem 4.11 and Theorem 4.13]{V07}.\\

\begin{Par}\label{Prop_elim}\textbf{Properties}
\begin{enumerate}
	\item The projection $\beta $ induces a homeomorphism between $\mathrm{Sing}(\mathcal{G}^{(n)})$ and $\beta (\mathrm{Sing}(\mathcal{G}^{(n)}))=\mathrm{Sing}(\mathcal{G}^{(n-e)})$.
	\item Any $\mathcal{G}^{(n)}$-local sequence over $V^{(n)}$ induces a $\mathcal{G}^{(n-e)}$-local sequence over $V^{(n-e)}$ and a commutative diagram
	\begin{equation}\label{diag:def-elim}
\xymatrix@R=0.7pc@C=3pt{
\mathcal{G}^{(n)}=\mathcal{G}_0^{(n)} & & & \mathcal{G}_1^{(n)} & & & \ldots & & & \mathcal{G}_r^{(n)}  \\
V^{(n)}=V_0^{(n)} \ar[dd]_{\beta } & & & V_1^{(n)} \ar[lll] \ar[dd]_{\beta _1} & & & \ldots \ar[lll] & & & V_r^{(n)} \ar[lll] \ar[dd]_{\beta _r}  \\
& & & & & & & & &\\
V^{(n-e)}=V_0^{(n-e)} & & & V_1^{(n-e)} \ar[lll] & & & \ldots \ar[lll] & & & V_r^{(n-e)} \ar[lll]  \\
\mathcal{G}^{(n-e)}=\mathcal{G}_0^{(n-e)} & & & \mathcal{G}_1^{(n-e)} & & & \ldots & & & \mathcal{G}_r^{(n-e)}  \\
}
\end{equation}
	where $\mathcal{G}_i^{(n-e)}$ is an elimination algebra of $\mathcal{G}_i^{(n)}$ for $i=0,\ldots ,r$, and the $\beta _i$ are smooth $\mathcal{G}^{(n)}$-admissible projections inducing therefore homeomorphisms between $\mathrm{Sing}(\mathcal{G}_i^{(n)})$ and $\mathrm{Sing}(\mathcal{G}_i^{(n-e)})$.
	\item Any $\mathcal{G}^{(n-e)}$-local sequence over $V^{(n-e)}$ induces a $\mathcal{G}^{(n)}$-local sequence over $V^{(n)}$ and a commutative diagram as above where $\mathcal{G}_i^{(n-e)}$ is an elimination algebra of $\mathcal{G}_i^{(n)}$ for $i=0,\ldots ,r$, and with $\beta _i$ smooth $\mathcal{G}^{(n)}$-admissible projections inducing homeomorphisms between $\mathrm{Sing}(\mathcal{G}_i^{(n)})$ and $\mathrm{Sing}(\mathcal{G}_i^{(n-e)})$.
	\item Properties 1-3 characterize the elimination algebra $\mathcal{G}^{(n-e)}$ up to weak equivalence.
	\item Any resolution of $\mathcal{G}^{(n)}$ induces a resolution of $\mathcal{G}^{(n-e)}$ and vice versa.
	\item For any two elimination algebras $\mathcal{G}^{(n-e)}$ and $\breve{\mathcal{G}}^{(n-e)}$ of $\mathcal{G}^{(n)}$, given by projections $V^{(n)}\stackrel{\beta }{\longrightarrow } V^{(n-e)}$ and $V^{(n)}\stackrel{\breve{\beta }}{\longrightarrow }\breve{V}^{(n-e)}$ respectively, we have the same order at the image of $\xi $ (see \cite[Theorem 10.1]{Br_V}). That is,
	$$\mathrm{ord}_{\xi }\mathcal{G}^{(n-e)}=\mathrm{ord}_{\xi }\breve{\mathcal{G}}^{(n-e)}\mbox{.}$$
	Let us define 
	$$\mathrm{ord}_{\xi }^{(n-e)}(\mathcal{G}^{(n)})$$
	as the order $\mathrm{ord}_{\xi }\mathcal{G}^{(n-e)}$ (the order at the image of $\xi $) for any elimination algebra $\mathcal{G}^{(n-e)}$ of $\mathcal{G}^{(n)}$ of dimension $n-e$. Hence $\mathrm{ord}_{\xi }^{(n-e)}(\mathcal{G}^{(n)})$ is an invariant for $\mathcal{G}^{(n)}$ at $\xi $.
\end{enumerate}
\end{Par}

In particular, given $X\subset V^{(n)}$ and a Rees algebra $\mathcal{G}^{(n)}$ representing the multiplicity of $X$, as in Example \ref{ex3}, we wish to find a Rees algebra in dimension $d=dim(X)$ which is an elimination algebra of $\mathcal{G}^{(n)}$. The reason for this will be explained in Section \ref{subsec:alg_res}. The following theorem guarantees that this is possible:

\begin{Thm}\label{thm:existe_elim_d}
Let $X\subset V^{(n)}$ be a $d$-dimensional variety over a field of characteristic zero, and $\mathcal{G}_X^{(n)}$ a Rees algebra over $V^{(n)}$ representing the multiplicity of $X$. Then it is possible to find a smooth projection $\beta :V^{(n)}\longrightarrow V^{(d)}$ inducing an elimination algebra $\mathcal{G}^{(d)}_X$ of $\mathcal{G}_X^{(n)}$. Moreover, the order $\mathrm{ord}_{\xi }^{(d)}(\mathcal{G}^{(n)}_X):= \mathrm{ord}_{\beta {\xi }}\mathcal{G}^{(d)}_X$ does not depend on the choice of the embedding or of the algebra $\mathcal{G}_X^{(n)}$.
\end{Thm}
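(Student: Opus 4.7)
The plan is to build the projection $\beta$ directly out of the local presentation data recorded in Example \ref{ex3}, verify it meets the three requirements of Definition \ref{def:elim_alg}, and then deduce the invariance of the order from the general properties of elimination algebras listed in \ref{Prop_elim} together with the weak equivalence machinery of Section \ref{subsec:eq_RA}. Concretely, after replacing $X$ by a suitable étale neighbourhood of $\xi$, Example \ref{ex3} provides a finite transversal projection $\beta_X:X\to V^{(d)}=\mathrm{Spec}(S)$ of generic rank $b$, which extends to the smooth projection
\[
\beta :V^{(n)}=\mathrm{Spec}(S[x_1,\ldots,x_{n-d}])\longrightarrow V^{(d)}=\mathrm{Spec}(S),
\]
together with equations $f_i\in S[x_i]$ in Tschirnhausen form whose differential closure generates $\mathcal{G}^{(n)}_X$.

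The first step is to check that $\beta$ is both transversal and admissible with $e=n-d$. Applying the computation of Lemma \ref{lema:generadores_algebra} to each variable $x_i$ separately, the differentially closed algebra $\mathcal{G}^{(n)}_X$ contains $x_i W$ for every $i=1,\ldots,n-d$. This shows that the tangent cone of $\mathcal{G}^{(n)}_X$ at $\xi$ is contained in $V(x_1,\ldots,x_{n-d})$, which meets $\mathrm{Ker}(d\beta)_\xi=\mathrm{span}\bigl(\partial/\partial x_1,\ldots,\partial/\partial x_{n-d}\bigr)$ only at the origin; in particular $\tau_\xi\ge n-d$, so $e=n-d$ is admissible. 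Admissibility in the differential sense is automatic because $\mathcal{G}^{(n)}_X$ is already differentially closed. With these hypotheses satisfied, Definition \ref{def:elim_alg} produces the elimination algebra $\mathcal{G}^{(d)}_X=\mathcal{G}^{(n)}_X\cap \mathcal{O}_{V^{(d)}}$ (up to integral closure).

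The invariance claim splits into three independent pieces, each of which can be handled by invoking an earlier result. First, once $X\hookrightarrow V^{(n)}$ and $\mathcal{G}^{(n)}_X$ are fixed, two different admissible transversal projections $V^{(n)}\to V^{(d)}$ and $V^{(n)}\to\breve V^{(d)}$ yield the same order at the image of $\xi$ by Property 6 of \ref{Prop_elim} (the reference \cite[Theorem 10.1]{Br_V}). Second, if we fix the embedding but replace $\mathcal{G}^{(n)}_X$ by another Rees algebra $\widetilde{\mathcal{G}}^{(n)}_X$ representing $\mathrm{mult}(X)$ locally at $\xi$, the two algebras are weakly equivalent by Definition \ref{representable}; by Properties 2 and 3 of \ref{Prop_elim}, weak equivalence is transferred to their elimination algebras, and Corollary \ref{cor:ord_weak_eq} then forces $\mathrm{ord}_{\beta\xi}\mathcal{G}^{(d)}_X=\mathrm{ord}_{\beta\xi}\widetilde{\mathcal{G}}^{(d)}_X$.

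The main obstacle, and the step I would treat last and most carefully, is independence of the embedding. Given two local embeddings $X\hookrightarrow V^{(n)}$ and $X\hookrightarrow V^{(n')}$ into smooth ambient spaces, the standard route is to compare both to a common enlargement obtained as (the graph of $X$ inside) $V^{(n)}\times_k V^{(n')}$, from which each original embedding is recovered by a further smooth projection. The effect on the Rees algebra of adjoining free variables is, up to weak equivalence, trivial: it corresponds to taking the pullback under a smooth morphism and leaves the order at $\xi$ of any subsequent elimination algebra of dimension $d$ unchanged. This reduction to a common ambient space, together with the invariance already established in the previous paragraph, yields that $\mathrm{ord}^{(d)}_\xi(\mathcal{G}^{(n)}_X)$ depends only on $X$ and $\xi$, as asserted; the technical details of the embedding comparison follow \cite[Theorem 4.11 and Theorem 4.13]{V07} together with \cite[Appendix A]{Br_V2}.
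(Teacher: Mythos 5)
Your proposal is correct, and it is considerably more explicit than the paper's own proof, which consists of a single citation to \cite[Section 21, Theorem 28.8, Theorem 28.10 and Example 28.2]{Br_V2}. Where the paper defers everything to the literature, you reconstruct the argument from the paper's own toolkit: existence of $\beta$ via the local presentation of Example \ref{ex3} and the computation underlying Example \ref{ex:elim_gral} (the elements $x_iW\in\mathcal{G}^{(n)}_X$ give transversality and $\tau _{\xi }\geq n-d$, while differential closedness gives admissibility, so Definition \ref{def:elim_alg} applies with $e=n-d$); independence of the projection via Property 6 of \ref{Prop_elim}; and independence of the representing algebra, for a fixed embedding, via weak equivalence of local presentations (Section \ref{subsec:eq_RA}), Properties 1--4 of \ref{Prop_elim}, and Corollary \ref{cor:ord_weak_eq}. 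This buys a largely self-contained reduction in which only the embedding comparison is outsourced, and your references for that step (\cite[Theorem 4.11 and Theorem 4.13]{V07}, \cite[Appendix A]{Br_V2}) point to the same circle of results the paper itself cites.

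One caveat on the embedding-independence sketch: the sentence asserting that adjoining free variables ``corresponds to taking the pullback under a smooth morphism'' is not right as literally stated. If $X\hookrightarrow V^{(n)}\hookrightarrow V^{(n)}\times \mathbb{A}^1$, the pullback of $\mathcal{G}^{(n)}_X$ under the projection $V^{(n)}\times \mathbb{A}^1\to V^{(n)}$ has singular locus the cylinder $\mathrm{Sing}(\mathcal{G}^{(n)}_X)\times \mathbb{A}^1$, so it does \emph{not} represent $\mathrm{mult}(X)$ for the enlarged embedding: condition (\ref{eq:condition_G_X:F}) fails. The correct representing algebra on the larger ambient space is the extension amalgamated with the degree-one ideal of the smaller ambient space, e.g. $\mathcal{G}^{(n)}_X\odot \mathcal{O}_{V^{(n+1)}}[x_{n+1}W]$, and one then checks that eliminating the extra variable recovers $\mathcal{G}^{(n)}_X$ up to weak equivalence, whence the $d$-dimensional elimination orders agree. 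Since this is precisely what the results you cite establish, the defect is one of phrasing rather than substance; but as written, that step would compute the wrong singular locus, so you should restate it before relying on it.
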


\begin{proof} This fact follows from \cite[Section 21, Theorem 28.8, Theorem 28.10 and Example 28.2]{Br_V2}.\\ 
\end{proof}

\begin{Ex}\ Let us suppose that $X$ is a hypersurface of dimension $d$, and consider the Rees algebra $\mathcal{G}_X^{(n)}$ representing the multiplicity of $X$, as in Example \ref{ex1}. There is a Rees algebra $\mathcal{G}_X^{(d)}$ over $S$, the elimination algebra of $\mathcal{G}_X^{(n)}$, given by
\begin{equation}\label{eq:elim_algebra}\mathcal{G}_X^{(d)}=\mathrm{Diff}(S[x][fW^b])\cap S[W]\end{equation}
describing the image by (\ref{ex:proj}) of $\mathrm{\underline{Max}\, mult}(X)$ (or equivalently, the set of points of maximum multiplicity of the image of $X$ by (\ref{ex:proj})). For a description of this elimination algebra see Lemma \ref{lema:generadores_algebra2} below.\\
\end{Ex}

\begin{Ex} Let us go back to Example \ref{ex2}. It is worth noting that $\mathcal{G}_X^{(d)}$ is invariant under translations of the variable $x$, see \cite{V00}, and hence the $S[x]$-Rees algebra generated by $fW^b\in S[x][W]$ and the $S[\tilde{x}]$-Rees algebra generated by $\tilde{f}W^b\in S[\tilde{x}][W]$ give equivalent elimination algebras $\mathrm{Diff}(S[x][fW^b])\cap S[W]$ and $\mathrm{Diff}(S[\tilde{x}][\tilde{f}W^b])\cap S[W]$ respectively
 (and now we are in the situation of Example \ref{ex1}). \\
\end{Ex}

\begin{Lemma}\label{lema:generadores_algebra2}
	Let $X$ be given by $f$ as in Example \ref{ex1}. Then the elimination algebra of $\mathcal{G}^{(n)}_X$ relative to (\ref{ex:proj}) is (up to integral closure)
	\begin{equation}\label{eq:elim_hyp}
	\mathcal{G}_X^{(d)}=\mathrm{Diff}(S[B_{b-2}W^2,\ldots ,B_iW^{b-i},\ldots ,B_0W^b])\mbox{.}
	\end{equation}
\end{Lemma}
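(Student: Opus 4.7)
The plan is to compute the intersection $\mathcal{G}_X^{(n)} \cap S[W]$ directly from the presentation given by Lemma \ref{lema:generadores_algebra}; by Definition \ref{def:elim_alg}, this intersection equals the elimination algebra of $\mathcal{G}_X^{(n)}$ relative to (\ref{ex:proj}) up to integral closure, so an equality of algebras on the nose will suffice.

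By Lemma \ref{lema:generadores_algebra},
$$\mathcal{G}_X^{(n)} = S[x][xW] \odot \mathrm{Diff}\bigl(S[x][B_{b-2}W^2, \ldots, B_0 W^b]\bigr).$$
I would first reduce the differential closure on the right to one over $S$. Every operator in $\mathrm{Diff}(S[x])$ may be written as a finite sum $\sum_j D_j\circ \partial_x^{\, j}$ with $D_j\in \mathrm{Diff}(S)$, and since $B_i\in S$ satisfies $\partial_x(B_i)=0$ only the $j=0$ terms contribute new elements. Hence
$$\mathrm{Diff}\bigl(S[x][B_{b-2}W^2, \ldots, B_0 W^b]\bigr) = S[x]\cdot \mathrm{Diff}\bigl(S[B_{b-2}W^2, \ldots, B_0 W^b]\bigr).$$
Write $\mathcal{H}:=\mathrm{Diff}\bigl(S[B_{b-2}W^2,\ldots,B_0W^b]\bigr)\subseteq S[W]$ and $\mathcal{H}_m\subseteq S$ for its degree-$m$ ideal. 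Combining with the contribution of $xW$, the degree-$n$ ideal $I_n\subset S[x]$ of $\mathcal{G}_X^{(n)}$ takes the form
$$I_n = \sum_{n_0=0}^{n} x^{n_0}\cdot S[x]\cdot \mathcal{H}_{n-n_0}.$$

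The inclusion $\mathcal{H}\subseteq \mathcal{G}_X^{(n)}\cap S[W]$ is immediate, since $\mathcal{H}\subseteq \mathrm{Diff}(S[x][B_iW^{b-i}])\subseteq \mathcal{G}_X^{(n)}$ and $\mathcal{H}\subseteq S[W]$ by construction. For the reverse inclusion, take $gW^n\in \mathcal{G}_X^{(n)}\cap S[W]$, so $g\in I_n\cap S$. Expanding $g=\sum_{n_0,k}x^{n_0}h_{n_0,k}(x)\,a_{n_0,k}$ with $h_{n_0,k}\in S[x]$ and $a_{n_0,k}\in \mathcal{H}_{n-n_0}$, and reading off the coefficient of $x^{0}$, only the $n_0=0$ terms survive, giving $g=\sum_k h_{0,k}(0)\,a_{0,k}\in S\cdot \mathcal{H}_n=\mathcal{H}_n$. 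Hence $gW^n\in \mathcal{H}$, so $\mathcal{G}_X^{(n)}\cap S[W]=\mathcal{H}$.

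The main obstacle is the grading bookkeeping in the last step: one must rule out that, among the terms with $n_0\geq 1$, non-trivial cancellations of positive $x$-powers secretly enlarge the $x$-constant part beyond $\mathcal{H}_n$. The explicit expansion resolves this, because the coefficient of $x^{0}$ in the polynomial $\sum x^{n_0}h_{n_0,k}(x)a_{n_0,k}$ is $S$-linear in the $a_{0,k}$ only, and each $a_{0,k}\in \mathcal{H}_n$, so the resulting constant lies in the ideal $\mathcal{H}_n$. In particular, no integral closure operation is required; the equality of algebras is exact, which is strictly stronger than the assertion of the lemma.
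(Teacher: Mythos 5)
Your proof is correct and takes essentially the same route as the paper, which likewise deduces the lemma from the presentation in Lemma \ref{lema:generadores_algebra} together with the facts that $B_i\in S$ and that the elimination algebra is $\mathcal{G}^{(n)}\cap \mathcal{O}_{V^{(n-e)}}$ up to integral closure. The only difference is that the paper asserts the intersection computation in one line, whereas you carry out the bookkeeping explicitly (decomposing $\mathrm{Diff}(S[x])$ into $\mathrm{Diff}(S)$-operators composed with powers of $\partial _x$, and extracting the $x^0$-coefficient), which in fact yields the equality $\mathcal{G}_X^{(n)}\cap S[W]=\mathrm{Diff}(S[B_{b-2}W^2,\ldots ,B_0W^b])$ on the nose.
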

	
\begin{proof} Considering the expression given by Lemma \ref{lema:generadores_algebra}, (\ref{eq:elim_hyp}) follows from the facts that $B_i\in S$ for $i=0, \ldots ,b-2$, and that $\mathcal{G}^{(n-e)}=\mathcal{G}^{(n)}\cap \mathcal{O}_{V^{(n-e)}}$.
\end{proof}

\begin{Rem}\label{rem:ex1}
One can see $\mathcal{G}_X^{(n)}$ as the smallest $S[x]$-Rees algebra containing $xW$ and $\mathcal{G}_X^{(d)}$. By abuse of notation, we will simply write
$$\mathcal{G}_X^{(n)}=S[x][xW]\odot \mathcal{G}^{(d)}_X\mbox{,}$$
meaning that we extend both algebras to Rees algebras over the same ring and apply $\odot $ afterwards (see Definition \ref{def:amalgama_op}).
\end{Rem}
	
	\begin{Lemma}\label{lema:ord_algebra_eliminacion}
	Let $X$ be a hypersurface, given by $f$ as in Example \ref{ex1}. Let $\mathcal{G}_X^{(d)}$ be the elimination algebra of $\mathcal{G}_X^{(n)}$ as in (\ref{eq:elim_algebra}). Then for $\xi \in \mathrm{Sing}(\mathcal{G}_X^{(n)})$,
	\begin{equation}\label{eq:orden_algebra_eliminacion}
	\mathrm{ord}_{\xi }(\mathcal{G}_X^{(d)})=\min _{i=0,\ldots ,b-2} \left\{ \frac{\mathrm{ord}_{\xi }(B_i)}{(b-i)}\right\} \mbox{.}
	\end{equation}
	\end{Lemma}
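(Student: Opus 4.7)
The plan is to reduce the computation to the explicit generators of the algebra before taking the differential closure, and then invoke two invariance principles: invariance of the order under differential closure and under integral closure.

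First, I would invoke Lemma \ref{lema:generadores_algebra2}, which identifies the elimination algebra up to integral closure as
$$\mathcal{G}_X^{(d)} = \mathrm{Diff}\bigl(S[B_{b-2}W^{2},\ldots,B_iW^{b-i},\ldots,B_0W^{b}]\bigr).$$
Call $\mathcal{A} = S[B_{b-2}W^{2},\ldots,B_0W^{b}]$ the inner (non‑closed) algebra, viewed as a Rees algebra over $V^{(d)}$. A direct application of the cited theorem (Proposition 6.4.1 of \cite{E_V}) to the explicit list of generators $\{B_iW^{b-i}\}_{i=0}^{b-2}$ gives
$$\mathrm{ord}_{\xi}(\mathcal{A}) \;=\; \min_{i=0,\ldots,b-2}\bigl\{\mathrm{ord}_{\xi}(B_iW^{b-i})\bigr\} \;=\; \min_{i=0,\ldots,b-2}\left\{\frac{\nu_{\xi}(B_i)}{b-i}\right\},$$
which is precisely the right-hand side of the formula we wish to prove.

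It then remains to check that passing from $\mathcal{A}$ to $\mathrm{Diff}(\mathcal{A})$, and finally to its integral closure, does not alter the order at $\xi$. For this I would use the structural result cited earlier in the paper: two Rees algebras are weakly equivalent if and only if they have the same integral closure of their differential closure. In particular $\mathcal{A}$, $\mathrm{Diff}(\mathcal{A})$ and $\overline{\mathrm{Diff}(\mathcal{A})}$ are pairwise weakly equivalent, so they share the same singular locus (so $\xi$ lies in all of them, since $\beta(\xi)$ lies in $\mathrm{Sing}(\mathcal{G}_X^{(d)})$ by property (1) of \ref{Prop_elim} applied to $\mathcal{G}_X^{(n)}$), and by Corollary \ref{cor:ord_weak_eq} they have the same order at $\xi$. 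Chaining the equalities
$$\mathrm{ord}_{\xi}(\mathcal{G}_X^{(d)}) \;=\; \mathrm{ord}_{\xi}\bigl(\overline{\mathrm{Diff}(\mathcal{A})}\bigr) \;=\; \mathrm{ord}_{\xi}(\mathcal{A})$$
closes the argument.

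The step that deserves the most care is the second one: one must ensure $\xi$ (really $\beta(\xi)$, under the abuse of notation) genuinely belongs to $\mathrm{Sing}(\mathcal{A})$ so that Corollary \ref{cor:ord_weak_eq} applies, and that the weak equivalence between a Rees algebra and its differential/integral closure is legitimately available in this setting. Both follow from the general machinery recalled in Section \ref{subsec:eq_RA}, so no further work is needed beyond citing it. After that, the proof is essentially just a bookkeeping of the explicit weights $b-i$ of the coefficients $B_i$ appearing in the Tschirnhausen form.
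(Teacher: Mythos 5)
Your proof is correct, but it takes a genuinely different route from the paper's. Both arguments start from Lemma \ref{lema:generadores_algebra2} and from the fact that the order of a Rees algebra at a point of its singular locus is the minimum of the orders of a generating set; the divergence is in how the differential (and integral) closure is disposed of. The paper does this by hand: for $\xi \in \mathrm{Sing}(\mathcal{G}_X^{(n)})$ one has $\nu _{\xi }(B_i)\geq b-i$, so the elementary inequality $\frac{A}{A'}\leq \frac{A-k}{A'-k}$ (valid for positive integers $A\geq A'$ and $k<A'$) shows that the $j$-th derivative of $B_iW^{b-i}$, whose order is at least $\frac{\nu _{\xi }(B_i)-j}{b-i-j}$, cannot have smaller order at $\xi $ than $B_iW^{b-i}$ itself, and \cite[Proposition 3.11]{Bl_E11} is cited for elements generated from the $B_i$ and their derivatives. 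You instead black-box this entire computation through the weak-equivalence machinery of Section \ref{subsec:eq_RA}: $\mathcal{A}$, $\mathrm{Diff}(\mathcal{A})$ and $\overline{\mathrm{Diff}(\mathcal{A})}$ share the same integral closure of their differential closures (using idempotence of $\mathrm{Diff}$ and, for the last algebra, the characteristic-zero compatibility of integral and differential closure from \cite{V3} --- a point you gesture at rather than verify, but which is available), so Theorem 3.11 of \cite{Br_G-E_V} makes them pairwise weakly equivalent and Corollary \ref{cor:ord_weak_eq} transfers the order at $\xi $. You also correctly isolate the one hypothesis that must be checked, namely that $\beta (\xi )$ lies in $\mathrm{Sing}(\mathcal{A})$, and obtain it from Property 1 of \ref{Prop_elim}; note it also follows directly from the Tschirnhausen form, since $\nu _{\xi }(f)\geq b$ forces $\nu _{\xi }(B_i)\geq b-i$, which is precisely the singular-locus condition for $\mathcal{A}$. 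As for what each approach buys: the paper's computation is elementary and self-contained, and it makes visible exactly where membership in the singular locus enters (it is what makes the fraction inequality point the right way); your argument is shorter and reuses general invariance results already recalled in the paper, at the cost of leaning on heavier machinery --- Corollary \ref{cor:ord_weak_eq} is itself established by arguments of exactly this computational kind --- so it is legitimate in context but less informative about why the formula holds.
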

	
	\begin{proof}
By the expression of $\mathcal{G}_X^{(d)}$ given in Lemma \ref{lema:generadores_algebra2}, it is clear that it is enough to prove that, for any $i$, the element $B_iW^{b-i}$ has lower order than any of its derivatives in $\xi $. The element $B_iW^{b-i}$ has order $\frac{\mathrm{ord}_{\xi }(B_i)}{b-i}$ in $\xi $, while the order of its $j$-th derivative (for $j<b-i$) is greater than or equal to $\frac{\mathrm{ord}_{\xi }(B_i)-j}{b-i-j}$, and for any pair of positive integers $A\geq A'$, $\frac{A}{A'}\leq \frac{A-k}{A'-k}$ for any $k<A'$. On the other hand, any element generated by the $B_i$ and their derivatives has greater order (see \cite[Proposition 3.11]{Bl_E11}).
	\end{proof}
	
\begin{Rem}
Let $X$ be a hypersurface given by $f$ as in Example \ref{ex2}. Then the result in Lemma \ref{lema:ord_algebra_eliminacion} can be applied after a variable change.\\
\end{Rem}

\begin{Ex}\label{ex:elim_gral} If $X$ is as in Example \ref{ex3}, for any $i\in \left\{ 1,\ldots ,n-d\right\} $, $f_i\in S[x_i]$ is the equation of a hypersurface $H_i$ in a scheme of dimension $\bar{n}=d+1$, $\mathrm{Spec}(S[x_i])$ and, by Remark \ref{rem:ex1}: 
$$\mathcal{G}_{H_i}^{(\bar{n})}=\mathrm{Diff}(S[x_i][f_iW^{b_i}])=S[x_i][x_iW]\odot \mathcal{G}_{H_i}^{(d)}\mbox{.}$$
By extending this algebra to $\mathcal{O}_{V^{(n)},\xi }$, we obtain
$$\mathcal{G}_{H_i}^{(n)}=\mathrm{Diff}(\mathcal{O}_{V^{(n)},\xi }[f_iW^{b_i}])=\mathcal{O}_{V^{(n)},\xi }[x_iW]\odot \mathcal{G}_{H_i}^{(d)}\mbox{.}$$
Hence, (\ref{loc_pres}) can be written as 
$$\mathcal{G}^{(n)}_{X}=\mathcal{G}_{H_1}^{(n)}\odot \ldots \odot \mathcal{G}_{H_{n-d}}^{(n)}=\mathcal{O}_{V^{(n)},\xi }[x_1W,\ldots ,x_{n-d}W]\odot \mathcal{G}_{H_1}^{(d)}\odot \ldots \odot \mathcal{G}^{(d)}_{H_{n-d}}\mbox{.}$$
This gives an easy expression for the elimination algebra of $\mathcal{G}_{X}^{(n)}$ relative to the projection 
$$\mathrm{Spec}(S[x_1,\ldots ,x_{n-d}])=V^{(n)} \longrightarrow V^{(d)}=\mathrm{Spec}(S)\mbox{,}$$
namely
$$\mathcal{G}_{X}^{(d)}=\mathcal{G}^{(d)}_{H_1}\odot \ldots \odot \mathcal{G}^{(d)}_{H_{n-d}}\mbox{.}$$
An explanation of this elimination can be found in \cite[Remark 16.10]{Br_V2}. The elimination algebra $\mathcal{G}_X^{(d)}$ will be differentially closed (see \cite[Proposition 5.1]{V3}). See Theorem \ref{thm:triv_orders} for the role of $\mathcal{G}^{(d)}_X$ in algorithmic resolution.\\
\end{Ex}

\subsection{Algorithmic resolution}\label{subsec:alg_res}

A variety $X$ of dimension $d$ over a field of characteristic zero can be desingularized by a sequence of blow ups at smooth centers \cite{Hir}. \textit{Algorithmic resolutions} provide a way to construct such sequences, attending to suitable invariants associated to the points of $X$ \cite{V1}, \cite{V2}, \cite{B-M}, \cite{E_V97}.

\subsubsection*{Resolution functions}
\addcontentsline{toc}{subsubsection}{Resolution functions}

For the construction of an algorithm of resolution \cite{E_V97}, consider a well ordered set $(\Lambda ,\geq )$ and an upper semicontinuous function defined on varieties $F(X)=F_X$, $F_X:X\longrightarrow (\Lambda, \geq )$ such that for any $X$, $\mathrm{\underline{Max}\, } F_X\subset X$ is a closed and smooth subset, and $F_X$ is constant on $X$ if and only if $X$ is smooth. Set $\mathrm{\underline{Max}\, } F_X$ as the center of the first blow up $X\stackrel{\pi _1}{\leftarrow} X_1$. The function $F_X$ must satisfy $F_X(\xi )>F_{X_1}(\xi ')$ whenever $\xi =\pi _1(\xi ')\in \mathrm{\underline{Max}\, } F_X$. Given a variety $X$, the algorithm will give us a sequence of blow ups by iterating the process, that is,
$$X=X_0\stackrel{\pi _1}{\longleftarrow}X_1\stackrel{\pi _2}{\longleftarrow}\ldots \stackrel{\pi _r}{\longleftarrow}X_r\mbox{,}$$
with  $\pi _i$ being  the blow up at $\mathrm{\underline{Max}\, }F_{X_{i-1}}$ for $i=1,\ldots ,r$.

\subsubsection*{Invariants}

\addcontentsline{toc}{subsubsection}{Invariants}

When it comes to the construction of the resolution function, we use invariants of the varieties in order to assign a value (in fact, a set of values) to each point reflecting the complexity of the singularities. Examples \ref{ex:HS} and \ref{ex:mult} give upper semicontinuous functions which are often useful for this construction.\\
\\
As a first coordinate of the resolution function $F_X$, we can consider the Hilbert-Samuel function or the multiplicity at each point. In particular, we will be interested in considering the multiplicity. We will compare the values of $F_X$ at different points using the lexicographical order, and this first coordinate will allow us to focus already on the stratum of maximum value of the multiplicity in $X$.\\
\\
For each $\xi \in \mathrm{\underline{Max}\, mult}(X)$, we know that we can attach a local presentation and an algebra $\mathcal{G}^{(n)}_X$ for the multiplicity. We have already said  that the order of $\mathcal{G}_X^{(n)}$ at $\xi $ is the most important resolution invariant at $\xi $. Therefore, let us take it as the second coordinate of $F_X$.\\
\\
If $X$ is a $d$-dimensional variety, then it can be shown that there are suitable admissible projections to smooth $(n-i)$- dimensional schemes $V^{(n-i)}$, and elimination algebras ${\mathcal G}^{(n-i)}$, $i=1,\ldots, n-d$.
For the following coordinates, we will use the orders $\mathrm{ord}_{\xi }\mathcal{G}_X^{(n-i)}$ of the eliminations as in \ref{Prop_elim} (6), for $i=1,\ldots ,n-d$ (see \ref{thm:existe_elim_d}):
\begin{equation}\label{eq:sat_fun_coords}
F_{X}(\xi )=\left( \mathrm{mult}_{\xi }(X),\; \mathrm{ord}_{\xi }\mathcal{G}_X^{(n)},\; \mathrm{ord}_{\xi }^{(n-1)}\mathcal{G}_X^{(n)},\ldots ,\; \mathrm{ord}_{\xi }^{(d+1)}\mathcal{G}_X^{(n)},\; \mathrm{ord}_{\xi }^{(d)}\mathcal{G}_X^{(n)},\ldots  \right) \mbox{.}
\end{equation}

These invariants behave well under weak equivalence of Rees algebras. More precisely:

\begin{Rem}
Two weakly equivalent Rees algebras $\mathcal{G}$ and $\mathcal{G}'$ share their resolution invariants and hence the constructive resolution of each of them induces the constructive resolution of the other one. This follows from the fact that all invariants that we consider for the construction or the resolution functions derive from Hironaka's order function (\cite[10.3]{Br_G-E_V},\cite[4.11, 4.15]{E_V97}) together with Corollary \ref{cor:ord_weak_eq}. In particular, this is the case for Rees algebras coming from different local presentations once we have fixed an immersion (see \ref{subsec:eq_RA}).
\end{Rem}

Among the orders in (\ref{eq:sat_fun_coords}), the next theorem will tell us that $\mathrm{ord}_{\xi }^{(d)}\mathcal{G}_X^{(n)}$ is the first interesting one, since all the previous are necessarily equal to $1$, and therefore this will be the coordinate we will focus on for our results.

\begin{Thm}\label{thm:triv_orders}\cite[16.7]{Br_V2}
Let $X$ be a $d$-dimensional variety, and let $(V^{(n)},\mathcal{G}^{(n)})$ be an $n$-dimensional pair attached to $X$ at a point $\xi \in \mathrm{\underline{Max}\; mult}(X)$. Then for any $e<n-d$ we have $\mathrm{ord}_{\xi }^{(n-e)}\mathcal{G}^{(n)}=1$.
\end{Thm}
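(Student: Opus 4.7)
The plan is to exploit the explicit local presentation for the multiplicity given in Example \ref{ex3} and further elaborated in Example \ref{ex:elim_gral}. After replacing $X$ by an \'etale neighbourhood of $\xi$ and choosing a finite transversal projection $\beta_X:X\to V^{(d)}$ as in (\ref{eq:beta_X}), we may assume that $\mathcal{G}_X^{(n)}$ is weakly equivalent to
$$\mathcal{G}_X^{(n)} \;=\; \mathcal{O}_{V^{(n)},\xi}[x_1W,\ldots,x_{n-d}W]\odot \mathcal{G}_{H_1}^{(d)}\odot\cdots\odot \mathcal{G}_{H_{n-d}}^{(d)},$$
where $\{x_1,\ldots,x_{n-d}\}$ are the variables transversal to $V^{(d)}$ and each hypersurface equation $f_i\in S[x_i]$ has been put in Tschirnhausen form (Lemma \ref{lema:generadores_algebra}). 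Since Hironaka's order is preserved under weak equivalence (Corollary \ref{cor:ord_weak_eq}), and since the order $\mathrm{ord}_\xi^{(n-e)}\mathcal{G}_X^{(n)}$ is independent of the admissible projection used to compute it by Property (6) of \ref{Prop_elim}, it suffices to exhibit a single convenient projection that witnesses the value $1$.

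The key observation is that for any $e$ with $0\le e<n-d$, the full projection $V^{(n)}\to V^{(d)}$ factors as $V^{(n)}\to V^{(n-e)}\to V^{(d)}$, where the first arrow eliminates, say, the $e$ variables $x_1,\ldots,x_e$. This intermediate projection is admissible with respect to $\mathcal{G}_X^{(n)}$: transversality holds because the $x_iW$ already appear in $\mathcal{G}_X^{(n)}$, which guarantees that the kernel of the differential meets the tangent cone only at $0$, and differential closure with respect to the relative operators $\partial/\partial x_1,\ldots,\partial/\partial x_e$ holds because $\mathcal{G}_X^{(n)}$ is a differential algebra. Consequently, the elimination algebra in $V^{(n-e)}$ is (up to integral closure)
$$\mathcal{G}^{(n-e)} \;=\; \mathcal{O}_{V^{(n-e)},\xi}[x_{e+1}W,\ldots,x_{n-d}W]\odot \mathcal{G}_{H_1}^{(d)}\odot\cdots\odot \mathcal{G}_{H_{n-d}}^{(d)}.$$

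Because $e<n-d$, at least one linear generator survives; for instance $x_{e+1}W\in \mathcal{G}^{(n-e)}$, and it has order $\nu_\xi(x_{e+1})/1=1$ at the image of $\xi$. On the other hand $\xi\in\mathrm{Sing}(\mathcal{G}^{(n-e)})$ forces $\mathrm{ord}_\xi \mathcal{G}^{(n-e)}\ge 1$, so we conclude $\mathrm{ord}_\xi \mathcal{G}^{(n-e)}=1$, and hence $\mathrm{ord}_\xi^{(n-e)}\mathcal{G}_X^{(n)}=1$ by Property (6) of \ref{Prop_elim}.

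The main obstacle is the verification that one can genuinely realise the partial elimination $\mathcal{G}^{(n-e)}$ through a projection that is both transversal and admissible in the sense of Definition \ref{def:elim_alg}; however, this reduces to the structural features of the local presentation from Example \ref{ex:elim_gral}, where the presence of the linear forms $x_iW$ in $\mathcal{G}_X^{(n)}$ provides exactly the transversality needed, and differential closure supplies the admissibility. Once this is granted, the argument is a direct computation of the order from the explicit generators of $\mathcal{G}^{(n-e)}$.
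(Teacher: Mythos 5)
Your argument is correct, but it is worth pointing out that the paper itself contains no proof of this statement: Theorem \ref{thm:triv_orders} is quoted directly from \cite[16.7]{Br_V2}, so your proposal is a genuinely independent, self-contained derivation from the material the paper does develop. What your route buys is economy: the local presentation of Examples \ref{ex3} and \ref{ex:elim_gral} exhibits the linear generators $x_1W,\ldots ,x_{n-d}W$ in $\mathcal{G}_X^{(n)}$, and these do all the work at once --- they show $\tau _{\xi }\geq n-d$, so that eliminating any $e<n-d$ variables is legitimate (the footnote to Definition \ref{def:elim_alg} requires $e$ no larger than $\tau $); they give the transversality of the partial projection, since the tangent space of $\mathcal{G}_X^{(n)}$ at $\xi $ lies in $\{ x_1=\ldots =x_{n-d}=0\} $ while $\mathrm{Ker}(d\beta )$ is spanned by the eliminated directions; and differential closure of $\mathcal{G}_X^{(n)}$ gives admissibility. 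In fact your argument can be streamlined further: you do not need the full identity you claim for $\mathcal{G}^{(n-e)}$ (which, as stated, would require an elimination computation), only the trivial containment $x_{e+1}W\in \mathcal{G}_X^{(n)}\cap \mathcal{O}_{V^{(n-e)}}[W]\subset \mathcal{G}^{(n-e)}$, which bounds $\mathrm{ord}_{\xi }\mathcal{G}^{(n-e)}\leq 1$, while $\xi \in \mathrm{Sing}(\mathcal{G}^{(n-e)})$ (Property 1 of \ref{Prop_elim}) gives $\mathrm{ord}_{\xi }\mathcal{G}^{(n-e)}\geq 1$. One citation should be repaired: Corollary \ref{cor:ord_weak_eq} only equates the \emph{ambient} orders of weakly equivalent algebras, so by itself it does not transfer the eliminated order $\mathrm{ord}_{\xi }^{(n-e)}$ from your chosen presentation to an arbitrary pair $(V^{(n)},\mathcal{G}^{(n)})$ attached to $X$; for that you need the stronger statement recorded in Section \ref{subsec:alg_res} (all invariants derived from Hironaka's order function, including the eliminated orders, are shared by weakly equivalent algebras, cf.\ Theorem \ref{thm:existe_elim_d} and Property 6 of \ref{Prop_elim}). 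Since that statement is part of the paper's toolkit, this is a misattribution rather than a gap, and the proof stands.
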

Thus, $F_X$ can actually be constructed as
\begin{equation}\label{eq:sat_fun_coords_abr}
F_{X}(\xi )=\left( \mathrm{mult}_{\xi }(X), \mathrm{ord}_{\xi }^{(d)}\mathcal{G}_X^{(n)},\ldots  \right) \mbox{.}
\end{equation}
It follows from \ref{Prop_elim} that $\mathrm{ord}_{\xi }^{(d)}\mathcal{G}_X^{(n)}$ does not depend on the choice of the elimination algebra. It neither depends on the immersion, by Theorem \ref{thm:existe_elim_d}. Our main result (Theorem \ref{thm:main_order-r}) will show that this invariant, $\mathrm{ord}_{\xi }^{(d)}\mathcal{G}_X^{(n)}$, can be obtained from the arcs in $X$ through $\xi $.

\vspace{0.5cm}
 
\section{Arc spaces and Nash multiplicity sequences}

\subsection{The  space of arcs of $X$}
Let $X$ be an algebraic variety over a field $k$ of characteristic zero. Let us suppose, for simplicity, that $X$ is affine. Otherwise, since we will work locally, it would be enough to consider open affine subsets of $X$. Thus, say $X=\mathrm{Spec}(R)$ for some $k$-algebra of finite type $R$.

\begin{Def}
The \textit{space of arcs of $X$}, $\mathcal{L}(X)$, is a $k$-scheme whose $K$-valued points are the morphisms 
\begin{equation}
\varphi :R \longrightarrow K[[t]]
\end{equation}
for any extension $K$ of $k$. We say that the prime  $\varphi^{-1}(\langle t \rangle)\subset R$ is the {\em center of the arc $\varphi$} in $X$. 
We denote by $\mathcal{L}_{\xi }(X)$ the \textit{space of arcs of $X$ through a (not necessarily closed) point $\xi \in X$}, i.e., those arcs in $\mathcal{L}(X)$ with center $\xi$.
\end{Def} 

\begin{Rem}
It should be noticed that, for a given $X$,
 $\mathcal{L}(X)$ is not necessarily of finite type, and therefore it is not an algebraic variety over $k$.
\end{Rem}

There is a long bibliography  where one can find the basics of arc spaces. For instance, we refer to \cite{Voj} for more details on the construction of $\mathcal{L}(X)$.

\begin{Def}\label{def:order_arc} We define the \textit{order of an arc} $\varphi \in \mathcal{L}(X)$ through $\xi \in X$, $\varphi : \mathcal{O}_{X,\xi }\longrightarrow K[[t]]$ as the largest positive integer $n$ such that $\varphi (\mathcal{M}_{\xi })\subset (t^n)$, where $\mathcal{M}_{\xi }$ is the maximal ideal of the local ring $\mathcal{O}_{X,\xi }$, and denote it by $\mathrm{ord}(\varphi )$ if $\xi $ is clear from the context.
\end{Def}

\subsection{Rees algebras and Nash multiplicity sequences}\label{subsec:RAandNash}

In \cite{L-J}, M. Lejeune-Jalabert introduced a sequence of positive integers attached to an arc in a germ of a hypersurface at a point, and she called it the \textit{Nash multiplicity sequence}. This sequence is non increasing:
$$m_0\geq m_1\geq m_2 \geq \ldots \geq m_k\geq 1$$
and stabilizes  for some $k\in \mathbb{N}$.\\
\\
Later, in \cite{Hickel05}, M. Hickel generalized this sequence for varieties of higher codimension. The way in which he constructs the sequence, involves a sequence of blow ups determined by the chosen arc. For this construction, Hickel works with arcs inside of a germ of a variety at a point (analytic context). We will work with arcs inside of a local neighbourhood of the variety at the point (local algebraic context). We will explain now this construction carefully, to show the computation of the Nash multiplicity sequence from this local algebraic point of view.

\begin{Par}\textbf{Nash multiplicity sequence}
Let $X^{(d)}$ be an irreducible algebraic variety of dimension $d$ over a perfect field $k$. Let $\xi $ be a point contained in $\mathrm{\underline{Max}\, mult}(X^{(d)})$, the closed set of points of maximum multiplicity of $X^{(d)}$.\footnote{Note that we can always assume this situation for any $\xi \in X$, since one can always consider a neighbourhood of $\xi $ where this is true.} For simplicity, in this paper we will assume that $\xi $ is a closed point. This will allow us to consider the blow up at $\xi $, since $\xi $ is a smooth center in this case. In case one wants to consider non  closed points, one needs just to localize $X$ at $\xi $ before performing the sequences that we will construct in this Section.\\
\end{Par}
Consider the product of $X^{(d)}$ with an affine line. Then, we have a surjective morphism
\begin{equation}\label{diag:prod_line}
X^{(d)}\stackrel{p}{\longleftarrow } X_{0}^{(d+1)}=X^{(d)}\times \mathbb{A}_{k}^{1}\mbox{,}
\end{equation}
given by the projection onto the first component. Let us write $\xi _0=(\xi ,0)$, which is a point in $X^{(d+1)}_0$.\\
\\
Consider the blow up of $X_0^{(d+1)}$ at $\xi _0$, which we will denote by $\pi _1$. We will write $X_1^{(d+1)}$ for the transform of $X_0^{(d+1)}$ under $\pi _1$.
After performing this blow up, we can choose a new point $\xi _1\in X_1^{(d+1)}$, and call $\pi _2$ the blow up of $X^{(d+1)}_1$ at $\xi _1$. \\
\\
Next, we will establish a criterion for the choice of each $\xi _i \in X_i^{(d+1)}$ using an arc, so that we can perform a sequence of permissible blow ups at points in this way.
 \begin{equation}\label{diag:perm_seq1}
\xymatrix{ 
(X_0^{(d+1)},\xi _0) & & (X_1^{(d+1)},\xi _1) \ar[ll]_{\pi _1} & & \ldots \ar[ll]_>>>>>>>>>>{\pi _2} & & (X_r^{(d+1)},\xi _r) \ar[ll]_<<<<<<<<<{\pi _r} \mbox{.}\\
}
\end{equation}
Let $\varphi \in \mathcal{L}(X^{(d)})$ be an arc in $X^{(d)}$ through $\xi $. That is, we have a local homomorphism of local rings
\begin{align*}
\varphi : \mathcal{O}_{X^{(d)},\xi } & \longrightarrow K[[t]]\\
\mathcal{M}_{\xi } & \longrightarrow <t>\mbox{,}
\end{align*}
or, equivalently, a morphism $\varphi ^*: \mathrm{Spec}(K[[t]]) \longrightarrow X^{(d)}$, mapping the closed point to $\xi $. This, together with the inclusion map $i: k[t]\rightarrow K[[t]]$ gives an arc $\Gamma _0 $ in $X^{(d+1)}_0$ through $\xi _0$ 
\begin{align*}
\Gamma _0: \mathcal{O}_{X_0^{(d+1)},\xi_0} & \stackrel{\varphi \otimes i}{\longrightarrow} K[[t]] \\
\mathcal{M}_{\xi _0} & \longmapsto <t>
\end{align*}
where $\Gamma_0^*$ is the morphism given by the universal property of the fiber product:
\begin{equation}
\xymatrix{
  \mathrm{Spec}(K[[t]]) \ar@/^2pc/[drr]^{i^*} \ar@/_1.5pc/[ddr]_{\varphi ^*} \ar@{|-->}[dr]^{\Gamma _0^*} &  & \\
 & (X^{(d)},\xi )\times _{k}\mathrm{Spec}(K[t])=(X_0^{(d+1)},\xi _0) \ar[d] \ar[r] & \mathrm{Spec}(K[t]) \ar[d]\\
 & (X^{(d)},\xi ) \ar[r] &  \mathrm{Spec}(k)
}
\end{equation}
Note that $\Gamma _0$ is in fact the graph of $\varphi $.\\
\\
Consider the blow up $\pi _1$ of $X_0^{(d+1)}$ at $\xi _0$. The initial \textit{Nash multiplicity of $X$ at $\xi $} is defined as
$$m=m_0=\mathrm{mult}_{\xi _0}(X_0^{(d+1)})=\mathrm{mult}_{\xi }(X^{(d)})\mbox{,}$$
where the last identity follows from the faithful flatness of (\ref{diag:prod_line}).\\
\\
After blowing up $X_0^{(d+1)}$ at $\xi _0$ (as in \ref{diag:perm_seq1}), the valuative criterion of properness ensures that we can lift $\Gamma _0^*$ to a unique arc in $X_1^{(d+1)}$, which we will denote by $\Gamma _1^*$. Now $\Gamma _1^*$ maps the closed point of $\mathrm{Spec}(K[[t]])$ to some closed point $\xi _1\in X_1^{(d+1)}$. Furthermore, $\xi _1\in E_1=\pi _1^{-1} (\xi _0)$ and $\xi _1\in \mathrm{Im}(\Gamma _1^*)$. This point $\xi _1$ will be the center of the blow up $\pi _2$. We iterate this process: for $i=1,\ldots ,r$, let $\Gamma _i$ be the lifting of the arc $\Gamma _{i-1}\in \mathcal{L}(X_{i-1}^{(d+1)})$ through $\xi _{i-1}$ by the blow up $\pi _i$ of $X^{(d+1)}_{i-1}$ with center $\xi _{i-1}$. Then $\Gamma _i$ is an arc in $\mathcal{L}(X^{(d+1)}_i)$ through a point $\xi _i$ in the exceptional divisor $E_i=\pi _i^{-1}(\xi _{i-1})$. We will say that the sequence of transformations at points chosen in this way is the sequence \textit{directed by $\varphi $} (or that the blow ups themselves are directed by $\varphi $), meaning that $\xi _0=(\varphi (0),0)=(\xi ,0)$ and $\xi _i=\mathrm{Im}(\Gamma _i^*)\cap E_i$ for $i=1,\ldots ,r$:
 \begin{equation}\label{diag:perm_seq1b}
\xymatrix{ 
(X_0^{(d+1)},\xi _0) & & (X_1^{(d+1)},\xi _1) \ar[ll]_{\pi _1} & & \ldots \ar[ll]_>>>>>>>>>>{\pi _2} & & (X_r^{(d+1)},\xi _r) \ar[ll]_<<<<<<<<<{\pi _r} \\
(\mathrm{Spec}(K[[t]]),0) \ar@{^{(}->}[u]^{\Gamma _0^*} & & (\mathrm{Spec}(K[[t]]),0) \ar[ll]_{id} \ar@{^{(}->}[u]^{\Gamma _1^*} & & \ldots \ar[ll]_>>>>>>>>>>>>{id} & & (\mathrm{Spec}(K[[t]]),0)\mbox{.} \ar[ll]_<<<<<<<<<<<{id} \ar@{^{(}->}[u]^{\Gamma _r^*} \\
}
\end{equation}
For this  sequence, the multiplicity of $X_i^{(d+1)}$ at $\xi _i$, will be the \textit{$i$-th Nash multiplicity}, $m_i$. The sequence $m_0, m_1,\ldots ,m_r$ is non-increasing  (see \cite[Theorem 4.1]{Hickel05} or  \cite{Dade}: the blow up at regular equimultiple centers does not increase the multiplicity) \  and eventually decreasing whenever   the generic point of the   initial arc $\varphi $ is not contained in $\mathrm{\underline{Max}\, mult}(X)$. Indeed, if $\varphi $ is contained in the stratum of $X$ of multiplicity $m'$ but not totally contained in any stratum of multiplicity greater than $m'$, then the sequence stabilizes at the value $m'$.\footnote{Therefore, for our purpose, we need to choose arcs in a way such that they are not contained in the set of points of highest multiplicity of $X$ (that is, $\varphi ^*(<0>)\nsubseteq \mathrm{\underline{Max}\; mult}(X)$).} Thus, we can find some $r$ so that for the diagram above the  sequence of Nash multiplicities   is such that $m_0=\ldots =m_{r-1}>m_r$. Our interest is in finding this $r$, namely the minimum number of blow ups at points directed by the arc $\varphi $ as above which  is necessary to perform in order to lower the  Nash multiplicity of $X$ at $\xi $.\\
\\
Since this can be done for any arc $\varphi \in \mathcal{L}(X)$ through $\xi $, let us define:
\begin{Def}\label{def:rho}
Let $\varphi $ be an arc in $X$ through $\xi $. We denote by $\rho _{X,\varphi }$ the minimum number of blow ups directed by $\varphi $ which are needed to lower the  Nash multiplicity of  $X$ at $\xi $. That is, $\rho _{X,\varphi }$ is such that $m=m_0=\ldots =m_{\rho _{X,\varphi }-1}>m_{\rho _{X,\varphi }}$. We will call $\rho _{X,\varphi }$ the \textit{persistance of $\varphi $ in $X$}. We denote by $\rho _{X}(\xi )$ the infimum of the number of blow ups directed by some arc in $X$ through $\xi $ needed to lower the Nash multiplicity   at $\xi $:
\begin{align*}
\rho _X: \mathrm{\underline{Max}\; mult}(X) & \longrightarrow \mathbb{N}\\
\xi & \longmapsto \rho _X(\xi )=\inf _{\varphi \in \mathcal{L}_{\xi }(X)}\left\{ \rho _{X,\varphi }\right\} \mbox{.}
\end{align*}
\end{Def}

To keep the notation as simple as possible, $\rho _{X,\varphi }$ does not contain a reference to the point $\xi $, even though it is clear that it is local. However, the point is determined by $\varphi $, and hence it is implicit, although not explicit in the notation. Similarly, we may refer to $\rho _X(\xi )$ as $\rho _X$ once the point is fixed.\\
\\
Let us define normalized versions of $\rho _{X,\varphi }$ and $\rho _X$ in order to avoid the influence of the order of the arc in the number of blow ups needed to lower the  Nash multiplicity. 

\begin{Def}\label{def:rho_bar}
For a given arc $\varphi $ in $X$, we will write
$$\bar{\rho }_{X,\varphi }=\frac{\rho _{X,\varphi }}{\mathrm{ord}(\varphi )}\mbox{,}$$
and similarly, we will denote
$$\bar{\rho }_{X}(\xi )=\inf _{\varphi \in \mathcal{L}_{\xi }(X)}\left\{ \bar{\rho }_{X,\varphi }\right\} \mbox{.}$$
\end{Def}
\vspace{1cm}

Let us state our main theorem now, and develop afterwards the tools needed for the proof of this and some related results. Recall that $\mathrm{ord}_{\xi }\mathcal{G}_X^{(d)}$ is the first interesting coordinate of our resolution function (see section \ref{subsec:alg_res}). Theorem \ref{thm:main_order-rho} at the end of Section \ref{sec:proof} gives a relation between this invariant and the Nash multiplicity sequence.\\
\\
In the following section, we will show that for $X$, $\xi \in X$ and $\varphi \in \mathcal{L}_{\xi }(X)$, we can attach a Rees algebra to the sequence of blow ups directed by $\varphi $. From this algebra, we will define a new quantity, $r_{X,\varphi }$ (see Definition \ref{def:r}) which is a refinement of $\rho _{X,\varphi }$. In particular, $\rho _{X,\varphi }$ is obtained by taking the integral part of $r_{X,\varphi }$ (see \ref{thm:r_elim_amalgama}). With this notation, the following result holds:
\vspace{0.2cm}
\begin{Thm}\label{thm:main_order-r} \textbf{(Main Theorem)}
Let $X$ be a $d$-dimensional variety defined over a field of characteristic zero $k$. Let $\xi $ be a point in $\mathrm{\underline{Max}\, mult}(X)$. Then, 
$$\mathrm{ord}_{\xi }\mathcal{G}_X^{(d)}=\min _{\varphi \in \mathcal{L}_{\xi }(X)}\left\{ \frac{r_{X,\varphi }}{\mathrm{ord}(\varphi )}\right\} \mbox{.}$$
\end{Thm}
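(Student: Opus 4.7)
\bigskip

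\textbf{Proof proposal.}

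The plan is to reduce the general statement to the hypersurface case and then exploit the decomposition of $\mathcal{G}_X^{(n)}$ provided by the local presentation of Example \ref{ex3}. First, I would fix a local presentation $\mathcal{G}_X^{(n)}=\mathrm{Diff}\bigl(\mathcal{O}_{V^{(n)},\xi}[f_1 W^{b_1},\ldots,f_{n-d}W^{b_{n-d}}]\bigr)$ with each $f_i\in S[x_i]$ in Tschirnhausen form, so that by Example \ref{ex:elim_gral} one has
\[
\mathcal{G}_X^{(n)}=\mathcal{O}_{V^{(n)},\xi}[x_1W,\ldots,x_{n-d}W]\odot\mathcal{G}_{H_1}^{(d)}\odot\cdots\odot\mathcal{G}_{H_{n-d}}^{(d)},\qquad \mathcal{G}_X^{(d)}=\mathcal{G}_{H_1}^{(d)}\odot\cdots\odot\mathcal{G}_{H_{n-d}}^{(d)}.
\]
In particular $\mathrm{ord}_\xi \mathcal{G}_X^{(d)}=\min_i\mathrm{ord}_\xi \mathcal{G}_{H_i}^{(d)}$, and since each hypersurface $H_i$ involves its own variable $x_i$, the invariant $r_{X,\varphi}$ should decompose as $r_{X,\varphi}=\min_i r_{H_i,\varphi_i}$, where $\varphi_i$ is the arc induced by $\varphi$ on $H_i$ through the diagram (\ref{diag:fact_hyp}). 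This reduces the theorem to the hypersurface case.

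Next I would prove the hypersurface case. Assume $X=V(f)\subset V^{(d+1)}$ with $f=x^b+B_{b-2}x^{b-2}+\cdots+B_0$ in Tschirnhausen form. By Lemma \ref{lema:ord_algebra_eliminacion},
\[
\mathrm{ord}_\xi\mathcal{G}_X^{(d)}=\min_{0\le i\le b-2}\frac{\nu_\xi(B_i)}{b-i}.
\]
Now an arc $\varphi\in\mathcal{L}_\xi(X)$ induces, via its graph $\Gamma_0$, a pullback of every element $gW^n\in\mathcal{G}_X^{(n+1)}$ to $K[[t]]$, and the natural definition of $r_{X,\varphi}$ (which I will take as the infimum over elements of $\mathcal{G}_X^{(n+1)}$ of $\mathrm{ord}_t\Gamma_0^\ast(g)/n$, since this is precisely the obstruction keeping the multiplicity from dropping under the sequence (\ref{diag:perm_seq1b})) yields
\[
r_{X,\varphi}=\min\left\{\mathrm{ord}_t\Gamma_0^\ast(x),\;\min_{0\le i\le b-2}\frac{\mathrm{ord}_t\tilde{\varphi}(B_i)}{b-i}\right\},
\]
where $\tilde{\varphi}$ denotes the composition of $\varphi$ with the projection $X\to V^{(d)}$. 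Dividing by $\mathrm{ord}(\varphi)$ and using $\mathrm{ord}_t\tilde\varphi(B_i)\ge \mathrm{ord}(\varphi)\cdot\nu_\xi(B_i)$ (a general property of arcs and the order function) gives $r_{X,\varphi}/\mathrm{ord}(\varphi)\ge\mathrm{ord}_\xi\mathcal{G}_X^{(d)}$. Equality is achieved by taking an arc whose initial form in $V^{(d)}$ is sufficiently generic with respect to the tangent cone, ensuring that each inequality $\mathrm{ord}_t\tilde\varphi(B_i)\ge\mathrm{ord}(\varphi)\nu_\xi(B_i)$ becomes an equality simultaneously, and that the $x$-coordinate does not impose a smaller value (which follows from $\mathrm{ord}_\xi\mathcal{G}_X^{(d)}\le 1$ together with the Tschirnhausen choice).

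Finally, to assemble the general case, the local presentation reduces the infimum
\[
\min_\varphi\frac{r_{X,\varphi}}{\mathrm{ord}(\varphi)}=\min_\varphi\min_i\frac{r_{H_i,\varphi_i}}{\mathrm{ord}(\varphi)}
\]
to $\min_i\mathrm{ord}_\xi\mathcal{G}_{H_i}^{(d)}=\mathrm{ord}_\xi\mathcal{G}_X^{(d)}$, provided one can choose a single arc $\varphi$ on $X$ that is simultaneously generic for all the hypersurfaces $H_i$; this is possible because each genericity condition is an open dense condition on the arc space, and a finite intersection of such conditions remains nonempty by a Baire-type argument on $\mathcal{L}_\xi(X)$. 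Theorem \ref{thm:existe_elim_d} together with weak equivalence (\S\ref{subsec:eq_RA}) ensures that the answer is independent of the choice of local presentation.

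\textbf{The main obstacle} will be the precise identification of $r_{X,\varphi}$ with an order of pullback of the Rees algebra by the arc, together with the verification that the transformation law (\ref{eq:transf_law}) of $\mathcal{G}_X^{(n+1)}$ under the sequence of point blow-ups directed by $\varphi$ corresponds step-by-step to the drop of the Nash multiplicity sequence, so that $[r_{X,\varphi}]=\rho_{X,\varphi}$. Once this bridge between the combinatorics of the Nash sequence and the resolution transforms of the Rees algebra is in place, the rest of the argument is a direct computation.
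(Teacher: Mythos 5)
Your overall architecture coincides with the paper's (Tschirnhausen form plus Lemma \ref{lema:ord_algebra_eliminacion} for the hypersurface case, diagonal-generic arcs for the equality, then the decomposition of $\mathcal{G}_X^{(n)}$ into the $\mathcal{G}_{H_i}^{(d)}$ via the local presentation, exactly as in Lemma \ref{lemma:rel_orders}), but your hypersurface inequality has a genuine gap at the $x$-coordinate term. In your formula $r_{X,\varphi}=\min\bigl\{\mathrm{ord}_t\Gamma_0^{*}(x),\,\min_i \mathrm{ord}_t\tilde\varphi(B_i)/(b-i)\bigr\}$, the bound $\mathrm{ord}_t\tilde\varphi(B_i)\geq \mathrm{ord}(\varphi)\cdot\nu_\xi(B_i)$ controls only the second entry; nothing you say prevents $\alpha_0:=\mathrm{ord}_t\Gamma_0^{*}(x)$ from realizing the minimum and being smaller than $\mathrm{ord}(\varphi)\cdot\mathrm{ord}_\xi\mathcal{G}_X^{(d)}$. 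This is precisely the content of the paper's Lemma \ref{lemma:ord_amalgama_final}: because $\varphi$ lies on $X$, one has $0=\varphi(f)=\varphi(x)^b+\sum_i\varphi(B_i)\varphi(x)^i$, and if $(b-i)\alpha_0<\mathrm{ord}_t\varphi(B_i)$ for every $i$, the right-hand side would have order exactly $b\alpha_0\neq\infty$, a contradiction; hence $\alpha_0\geq\min_i\mathrm{ord}_t\varphi(B_i)/(b-i)$ and the $x$-term never matters. Your substitute justification, ``$\mathrm{ord}_\xi\mathcal{G}_X^{(d)}\le 1$,'' is false: the order of the elimination algebra in dimension $d$ is $\geq 1$ and can be arbitrarily large (for $f=x^2+z^{100}$ it is $50$); it is the orders in dimensions strictly above $d$ that equal $1$ (Theorem \ref{thm:triv_orders}). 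Relatedly, in your equality step the lifted arc's $x$-coordinate $g(t)$ may have $\mathrm{ord}(\varphi)=\mathrm{ord}_t(g)$ strictly smaller than the order $N$ of the diagonal projection, and the paper needs a separate argument (Lemma \ref{lemma:tau_mayor}, again exploiting $\varphi(f)=0$) showing that in this case $\mathrm{ord}_\xi\mathcal{G}_X^{(d)}=\bar r_{X,\varphi}=1$; you do not address this case.

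The second gap is the existence of an optimal arc \emph{on} $X$. Your Baire-type argument on $\mathcal{L}_\xi(X)$ is not justified: the genericity conditions are conditions on the projected arc $\varphi^{(d)}$, and density of such conditions in $\mathcal{L}_\xi(V^{(d)})$ says nothing about whether any arc actually contained in $X$ projects to a diagonal-generic one. The paper's route is concrete and avoids topology on arc spaces entirely: simultaneous genericity for all the $H_i$ is cheap, since one only needs units $u_1,\ldots,u_d\in k$ with $\mathrm{in}_\xi(p_i)(u_1,\ldots,u_d)\neq 0$ for finitely many elements $p_iW^{l_i}$ realizing the orders $\mathrm{ord}_\xi\mathcal{G}_{H_i}^{(d)}$, possible because $k$ is infinite (Remark \ref{rem:lifting_arcs_gral}); the real work is Lemma \ref{lemma:lifting_arcs}, which lifts the resulting diagonal arc to $X$ using finiteness of the transversal projection $\beta_X$ — the kernel prime $\mathcal{P}=\mathrm{Ker}(\bar\varphi^{(d)})$ admits a prime $\mathcal{Q}\subset\mathcal{O}_{X,\xi}$ lying over it, finiteness forces the quotient to have dimension one, and the resulting curve on $X$ carries an arc whose projection is again diagonal-generic by Remark \ref{rem:comp_diag_arcs}. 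Until these two points — the lower bound on $\alpha_0$ via $\varphi(f)=0$, and the lifting of generic arcs to $X$ — are supplied, both directions of your hypersurface argument and hence your final assembly are incomplete; by contrast, your decomposition $r_{X,\varphi}=\min_i r_{H_i,\varphi_i}$ and the comparison through $\mathrm{ord}(\varphi)=\min_i\mathrm{ord}(\varphi_i)$ are exactly the paper's Lemma \ref{lemma:rel_orders} and (\ref{orders_arcs_i}) and are sound.
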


This result will be reformulated in \ref{thm:main} and the proof will be addressed in section \ref{sec:proof}.

\vspace{0.5cm}

\section{Rees algebras attached to Nash multiplicity sequences}\label{par:def_Nash_m}

In this section, the situation we consider for all constructions and results is always the same, specified in \ref{subsec:setting}.
\subsection{Setting: notation and hypothesis}\label{subsec:setting}
Let $X$ be a $d$-dimensional variety over $k$. Locally in an \'etale neighbourhood $\mathcal{U}_{\eta }$ of each point $\eta \in X$, we can find an immersion  of $\eta $, $\mathcal{U}_{\eta }\hookrightarrow V^{(n)}$, and a Rees algebra $\mathcal{G}_X^{(n)}$ over $\mathcal{O}_{V^{(n)},\xi }$ such that
\begin{equation}\label{eq:condition_G_X}
\mathrm{Sing}(\mathcal{G}^{(n)}_X)=\mathrm{\underline{Max}}\, \mathrm{mult}(X)\mbox{,}
\end{equation}
and the equality is preserved by $\mathcal{G}_X^{(n)}$-local sequences over $V^{(n)}$ as long as the maximum multiplicity does not decrease (see \cite{V}). In other words, the multiplicity is represented by $\mathcal{G}_X^{(n)}$ (see Definition \ref{representable}). Let us recall that $\mathcal{G}^{(n)}_X$ can be chosen to be differentially closed (see \ref{loc_pres}). For simplicity of the notation, we will also write $X$ for this neighbourhood $\mathcal{U}_{\eta }$ from now on. \\

Let us choose a point $\xi \in \mathrm{\underline{Max}\, mult}(X)$. If we go back to (\ref{diag:prod_line}), after the product $X^{(d)}\times \mathbb{A}_k^1$, we also have an immersion, and thus a commutative diagram
\begin{equation}\label{diag:p}
\xymatrix{
V^{(n)} & & V_0^{(n+1)}=V^{(n)}\times \mathbb{A}_k^1 \ar[ll]_{p} \\
X^{(d)} \ar@{^{(}->}[u] & & X_0^{(d+1)}=X^{(d)}\times \mathbb{A}_k^1 \ar[ll]_{\left. p\right| _{X_0^{(d+1)}}} \mbox{.} \ar@{^{(}->}[u] \\
}
\end{equation}
In particular, $p$ is a local sequence on $V^{(n)}$ and preserves (\ref{eq:condition_G_X}), and thus the smallest $\mathcal{O}_{V_0^{(n+1)},\xi _0}$-Rees algebra contaning $\mathcal{G}^{(n)}_X$ (the extended algebra) represents the function $\mathrm{mult}(X_0^{(d+1)})$. We will refer to this algebra as the $\mathcal{O}_{V_0^{(n+1)},\xi _0}$-Rees algebra $\mathcal{G}_{X_0}^{(n+1)}$.\\
\\
Fix an arc $\varphi \in \mathcal{L}(X)$ through $\xi $ not contained in $\mathrm{\underline{Max}\, mult}(X)$. The sequence of blow ups at points directed by $\varphi $ defined in (\ref{diag:perm_seq1b}) induces a sequence\footnote{For simplicity of the notation, we will often identify the points $\xi _i$ in $X_i^{(d+1)}$ with their images in $V_i^{(n+1)}$.} of blow ups for $V_0^{(n+1)}$:
 \begin{equation}\label{diag:perm_seq1c}
\xymatrix{ 
(V_0^{(n+1)},\xi _0) & & (V_1^{(n+1)},\xi _1) \ar[ll]_{\pi _1} & & \ldots \ar[ll]_>>>>>>>>>>{\pi _2} & & (V_r^{(n+1)},\xi _r) \ar[ll]_<<<<<<<<<{\pi _r} \\
(X_0^{(d+1)},\xi _0) \ar@{^{(}->}[u] & & (X_1^{(d+1)},\xi _1) \ar[ll]_{\left. \pi _1\right| _{X_1^{(d+1)}}} \ar@{^{(}->}[u] & & \ldots \ar[ll]_>>>>>>>>>>{\left. \pi _2\right| _{X_2^{(d+1)}}} & & (X_r^{(d+1)},\xi _r) \ar[ll]_<<<<<<<<<{\left. \pi _r\right| _{X_r^{(d+1)}}} \ar@{^{(}->}[u] \\
(\mathrm{Spec}(K[[t]]),0) \ar@{^{(}->}[u]^{\Gamma _0^*} & & (\mathrm{Spec}(K[[t]]),0) \ar[ll]_{id} \ar@{^{(}->}[u]^{\Gamma _1^*} & & \ldots \ar[ll]_>>>>>>>>>>>>{id} & & (\mathrm{Spec}(K[[t]]),0)\mbox{.} \ar[ll]_<<<<<<<<<<<{id} \ar@{^{(}->}[u]^{\Gamma _r^*} \\
}
\end{equation}
Consider now the ring $\mathcal{O}_{V^{(n)},\xi }\otimes _{k} K[[t]]$, and the localization at $\xi _0=(\xi ,0)$:\footnote{We use the same notation for the image of $\xi $ by $p*$ and by $\bar{\delta }$.}
\begin{equation}\label{diag:completion}
\delta:\mathcal{O}_{V^{(n)},\xi } \longrightarrow (\mathcal{O}_{V^{(n)},\xi }\otimes _{k} K[[t]])_{\xi _0}\mbox{,}
\end{equation}
and let us denote $\tilde{V}_0^{(n+1)}=\mathrm{Spec}(\mathcal{O}_{V^{(n)},\xi }\otimes _{k} K[[t]])_{\xi _0}$ and $\tilde{X}_0^{(d+1)}=\mathrm{Spec}(\mathcal{O}_{X^{(d)},\xi }\otimes _{k} K[[t]])_{\xi _0}$. Let us choose a regular system of parameters $y_1,\ldots y_n\in \mathcal{O}_{V^{(n)},\xi }$, so that  $\{ y_1,\ldots y_n,t\} $ is a regular system of parameters in $(\mathcal{O}_{V^{(n)},\xi }\otimes _{k} K[[t]])_{\xi _0}$.\\

Note that if $\beta_X:X\to \mathrm{Spec}(S)=V^{(d)}$ is a finite morphism as in (\ref{eq:beta_X}) then after the natural base extension, $\tilde{X}_0^{(d+1)}\to \tilde{V}_0^{(d+1)}$ is also a finite morphism. We
will need this fact in the proof of Proposition \ref{prop:eq_amalgama_ordenes} below.

Now $\Gamma _0$ can be described by the images of $t$ and the classes $\overline{y_i}$ of the $y_i$ in $\mathcal{O}_{X_0^{(d+1)},\xi _0}$, for $i=1, \ldots ,n$:
\begin{align*}
\Gamma _0: \mathcal{O}_{X_0^{(d+1)},\xi _0} & \longrightarrow K[[t]]\\
\overline{y_i} & \longmapsto \varphi_{y_i}=\varphi (\overline{y_i}) \mbox{\; \; i=1,\ldots ,n}\\
t & \longmapsto t\mbox{.}
\end{align*}
Since both $\varphi $ and $\delta $ are continuous, there is a $k$-morphism $\tilde{\Gamma }_0:(\mathcal{O}_{V^{(n)},\xi }\otimes _{k} K[[t]])_{\xi _0}\longrightarrow K[[t]]$ which is completely determined by the images of the $\overline{y_i}$ and $t$. The following commutative diagram provides an overview of the situation:

\begin{equation}\label{diag:triangle}
 \xymatrix@C=0.5pc@R=1pc{\mathcal{O}_{V_0^{(n+1)},\xi _0} \ar[d] & & & \mathcal{O}_{V^{(n)},\xi }  \ar[lll]_{p^*} \ar[rrrrrrr]^{\delta } \ar[d] & & & & & & & (\mathcal{O}_{V^{(n)},\xi }\otimes _{k} K[[t]])_{\xi _0} \ar[d]\\
\mathcal{O}_{X_0^{(d+1)},\xi _0} \ar[ddddrrr]_{\Gamma _0} & & & \mathcal{O}_{X^{(d)},\xi } \ar[lll] \ar[rrrrrrr] \ar[dddd]^{\varphi } & & & & & & & (\mathcal{O}_{X^{(d)},\xi }\otimes _{k} K[[t]])_{\xi _0} \ar@/^3pc/[ddddlllllll]^{\tilde{\Gamma }_0}\\
\overline{y_i},t \ar@{|->}[dddrr] & & & & \overline{y_i} \ar@{|->}[dd] \ar@{|->}[rrrr] & & & & y_i \ar@/^1pc/@{|->}[ddllll] & & \\
& & & & & & & & & & \\
& & & & \varphi _{y_i} & & & & & & \\
& & \varphi _{y_i}=\varphi (\overline{y_i}),t & K[[t]] & & & & & & & \\
}
\end{equation}

\vspace{1cm}
Note that $\tilde{\Gamma }_0$ is an arc in $\tilde{X}_0^{(d+1)}$ defining a curve $C_0$ which is smooth, since it is given in $\tilde{V}_0^{(n+1)}$ by the equations\footnote{$C_0$ is a smooth curve in a local ring, and hence a complete intersection.} $y_i-\varphi _{y_i}=0$ for $i=1,\ldots ,n$ where $\varphi _{y_i}\in K[[t]]$ for $i=1,\ldots n$. This curve is the closure of the image of $\tilde{\Gamma }_0^*:\mathrm{Spec}(K[[t]]) \rightarrow \tilde{V}_0^{(n+1)}$, induced by $\tilde{\Gamma}_0$. We get an analogous diagram to that in (\ref{diag:perm_seq1c}):
 \begin{equation}\label{diag:perm_seq2}
\xymatrix{ 
(\tilde{V}_0^{(n+1)},\xi _0) & & (\tilde{V}_1^{(n+1)},\xi _1) \ar[ll]_{\tilde{\pi }_1} & & \ldots \ar[ll]_>>>>>>>>>>{\tilde{\pi }_2} & & (\tilde{V}_r^{(n+1)},\xi _r) \ar[ll]_<<<<<<<<<{\tilde{\pi }_r} \\
(\tilde{X}_0^{(d+1)},\xi _0) \ar@{^{(}->}[u] & & (\tilde{X}_1^{(d+1)},\xi _1) \ar[ll]_{\left. \tilde{\pi }_1\right| _{\tilde{X}_1^{(d+1)}}} \ar@{^{(}->}[u] & & \ldots \ar[ll]_>>>>>>>>>>{\left. \tilde{\pi }_2\right| _{\tilde{X}_2^{(d+1)}}} & & (\tilde{X}_r^{(d+1)},\xi _r) \ar[ll]_<<<<<<<<<{\left. \tilde{\pi }_r\right| _{\tilde{X}_r^{(d+1)}}} \ar@{^{(}->}[u] \\
(C_0,\xi _0) \ar@{^{(}->}[u] & & (C_1,\xi _1) \ar[ll]_{\left. \tilde{\pi }_1\right| _{C_1}} \ar@{^{(}->}[u] & & \ldots \ar[ll]_>>>>>>>>>>>>{\left. \tilde{\pi }_2\right| _{C_2}} & & (C_r,\xi _r) \ar[ll]_<<<<<<<<<<<{\left. \tilde{\pi }_r\right| _{C_r}} \ar@{^{(}->}[u] \\
}
\end{equation}
where we can see that the preimage  $\tilde{E}_i$ of $\xi _{i-1}$ by $\tilde{\pi }_i$ always intersects $C_i$ at a single point. 
This point is $\xi _i$, the center of the blow up $\tilde{\pi }_{i+1}$.\\

\subsection{Contact algebras}
With the notation in Section \ref{subsec:setting}, let us look now at the closed set $C_0\subset \tilde{V}_0^{(n+1)}$ defined by the arc $\varphi $. We can find an $(\mathcal{O}_{V^{(n)},\xi }\otimes _{k} K[[t]])_{\xi _0}$-Rees algebra $\mathcal{G}_{\varphi }^{(n+1)}$ representing $C_0$ in the sense of Definition \ref{representable}. That is, $\mathcal{G}_{\varphi }^{(n+1)}$ will satisfy $\mathrm{Sing}(\mathcal{G}_{\varphi }^{(n+1)})=C_0$, and for any local sequence as in (\ref{diag:seq_transf}), $\mathrm{Sing}(\mathcal{G}_{\varphi ,i}^{(n+1)})=C_i$, where $C_i$ is the strict transform of $C_{i-1}$ by $\phi _i$ if it is a blow up at a smooth center, or the pullback of $C_{i-1}$ if $\phi _i$ is a smooth morphism. It can be shown that
\begin{equation}\label{ec:RA_arc}\mathcal{G}_{\varphi }^{(n+1)}=\mathcal{O}_{\tilde{V}_0^{(n+1)},\xi _0}[h_1W,\ldots ,h_nW]\mbox{,}\end{equation}
where $h_i=(y_i-\varphi _{y_i})$ for $i=1,\ldots ,n$.
\vspace{0.5cm}
Consider now the closed set
\begin{equation}\label{form:intersection}
Z_0=C_0 \cap \left\{ \eta \in \tilde{X}_0^{(d+1)}: \mathrm{mult}_{\eta }(\tilde{X}_0^{(d+1)})=m\right\} \subset \tilde{V}_0^{(n+1)}\mbox{.}
\end{equation}
For any local sequence
\begin{equation}\label{eq:seq_Z}\tilde{V}_0^{(n+1)}\stackrel{\pi _1}{\longleftarrow}\tilde{V}_1^{(n+1)}\stackrel{\pi _2}{\longleftarrow}\ldots \stackrel{\pi _r}{\longleftarrow}\tilde{V}_r^{(n+1)}\end{equation}
we define $Z_i$, for $i=1, \ldots ,r$, as the closed set 
\begin{equation}
Z_i=C_i \cap \left\{ \eta \in \tilde{X}_i^{(d+1)}: \mathrm{mult}_{\eta }(\tilde{X}_i^{(d+1)})=m\right\} \mbox{,}
\end{equation}
where $C_i$ is the transform of $C_{i-1}$ by $\pi _i$ (that is, the pullback if $\pi _{i-1}$ is a smooth morphism, and the strict transform if it is a blow up at a smooth center contained in $Z_{i-1}$) and $\tilde{X}_i^{(d+1)}$ is the transform of $\tilde{X}_{i-1}^{(d+1)}$.\\
\\

\begin{Def}
Let us suppose now that one can find an $(\mathcal{O}_{V^{(n)},\xi }\otimes _{k} K[[t]])_{\xi _0}$-Rees algebra $\mathcal{H}$ whose singular locus is $Z_0$, and such that this is preserved by local sequences as in (\ref{eq:seq_Z}) (and in particular for sequences of blow ups of $\tilde{X}_0^{(d+1)}$ directed by $\varphi $). We will say that such an algebra, if it exists, is an \textit{algebra of contact of $\varphi $ with $\mathrm{\underline{Max}\; mult}(X)$}.
\end{Def}

\begin{Rem}
Lowering the Nash multiplicity of  $X$ at $\xi $, $m$, is therefore equivalent to resolving this $\mathcal{H}$, and consequently $\rho _{X,\varphi }$ as in Definition \ref{def:rho} is the number of induced transformations of this Rees algebra $\mathcal{H}$ which are necessary to resolve it (see Definition \ref{def:res_RA}).
\end{Rem}

\begin{Rem}
Note that, by the way in which it has been defined, the algebra of contact of $\varphi $ with $\mathrm{\underline{Max}\; mult}(X)$, if it exists, is unique up to weak equivalence.
\end{Rem}

Denote
\begin{equation}\label{eq:thm_amalgama}
\mathcal{G}^{(n+1)}_{X_0,\varphi }:=\mathcal{G}^{(n+1)}_{\tilde{X}_0}\odot \mathcal{G}_{\varphi }^{(n+1)}\mbox{,}
\end{equation}
where $\mathcal{G}_{\tilde{X}_0}^{(n+1)}$ is the extension of $\mathcal{G}_{X}^{(n)}$ to $(\mathcal{O}_{V^{(n)},\xi }\otimes _{k} K[[t]])_{\xi _0}$ (see (\ref{diag:p}) and (\ref{diag:completion})) and $\mathcal{G}_{\varphi }^{(n+1)}$ is as in (\ref{ec:RA_arc}).\footnote{Note that $\mathcal{G}_{\varphi }$ and $\mathcal{G}_{\tilde{X}_0}^{(n+1)}$ are differentially closed by definition.}

\begin{Prop}\label{prop:eq_amalgama_ordenes}
Let $X$ be a variety, let $\xi $ be a point in $\mathrm{\underline{Max}\; mult}(X)$, and let $\varphi $ be an arc in $X$ through $\xi $ with the hypothesis and notation from Section \ref{subsec:setting}. Then the Rees algebra $\mathcal{G}^{(n+1)}_{X_0,\varphi }$ from (\ref{eq:thm_amalgama}) is an algebra of contact of $\varphi $ with $\mathrm{\underline{Max}\; mult}(X)$. Moreover, the restriction $\mathcal{G}_{X_0,\varphi }^{(1)}$ of the same Rees algebra to the curve $C_0$ defined by $\varphi $ is also an algebra of contact of $\varphi $ with $\mathrm{\underline{Max}\; mult}(X)$. In particular, resolving $\mathcal{G}_{X_0,\varphi }^{(n+1)}$ is equivalent to resolving $\mathcal{G}_{X_0,\varphi }^{(1)}$.
\end{Prop}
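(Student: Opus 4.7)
The plan is to verify the two defining properties of a contact algebra---singular locus equal to $Z_0$, and preservation of this identity under local sequences---for the amalgama $\mathcal{G}^{(n+1)}_{X_0,\varphi}$, and then to deduce the analogous statements for its restriction $\mathcal{G}^{(1)}_{X_0,\varphi}$ to the curve $C_0$.

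First I would compute $\mathrm{Sing}(\mathcal{G}^{(n+1)}_{X_0,\varphi}) = \mathrm{Sing}(\mathcal{G}^{(n+1)}_{\tilde{X}_0}) \cap \mathrm{Sing}(\mathcal{G}^{(n+1)}_\varphi)$, by the general identity for the operation $\odot$. The first factor equals $\mathrm{\underline{Max}\; mult}(\tilde{X}_0^{(d+1)})$, because $\mathcal{G}^{(n+1)}_{\tilde{X}_0}$ is the pullback of $\mathcal{G}^{(n)}_X$ under the smooth morphisms in (\ref{diag:p}) and (\ref{diag:completion}), and hence still represents the multiplicity by Definition \ref{representable} and the setup in \ref{subsec:setting}. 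The second equals $C_0$ directly from (\ref{ec:RA_arc}), since the $h_i = y_i - \varphi_{y_i}$ generate the ideal of the smooth curve $C_0$. Their intersection is precisely $Z_0$, as in (\ref{form:intersection}). To see that this identity is preserved under any $\mathcal{G}^{(n+1)}_{X_0,\varphi}$-local sequence, I would use that transformations commute with $\odot$, so it suffices to verify each factor separately. For a permissible blow up along a center $Y_{i-1}\subset Z_{i-1} \subset \mathrm{\underline{Max}\; mult}(\tilde{X}_{i-1}^{(d+1)}) \cap C_{i-1}$, the transform of $\mathcal{G}^{(n+1)}_{\tilde{X}_{i-1}}$ continues to represent the multiplicity of $\tilde{X}_i^{(d+1)}$ by representability of $\mathcal{G}^{(n)}_X$, while the transform of $\mathcal{G}^{(n+1)}_\varphi$ is generated by local equations of the smooth strict transform $C_i$. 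Intersecting the two singular loci yields $Z_i$; the case of smooth morphisms is immediate from pullback.

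For the restriction claim, note that $C_0 \cong \mathrm{Spec}(K[[t]])$ is a smooth $1$-dimensional subscheme and that the generators of $\mathcal{G}^{(n+1)}_\varphi$ vanish identically on $C_0$. Consequently $\mathcal{G}^{(1)}_{X_0,\varphi}$ reduces to the restriction of $\mathcal{G}^{(n+1)}_{\tilde{X}_0}$ to $C_0$, concretely the $K[[t]]$-Rees algebra generated by $\varphi(f_i) W^{b_i}$ for each generator $f_i W^{b_i}$ of $\mathcal{G}^{(n)}_X$. Its singular locus at $\xi_0$ agrees with $Z_0$, since $\xi_0 \in \mathrm{\underline{Max}\; mult}(X)$ forces $\nu_t(\varphi(f_i)) \geq b_i$ while the assumption $\varphi \not\subset \mathrm{\underline{Max}\; mult}(X)$ excludes the generic point of $C_0$; the same argument propagates iteratively through the sequence (\ref{diag:perm_seq2}). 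The equivalence of resolutions then follows from the fact that $\mathcal{G}^{(n+1)}_\varphi$ has $\tau$-invariant equal to $n$ at each point of $C_0$, so by the properties collected in \ref{Prop_elim} the amalgama $\mathcal{G}^{(n+1)}_{X_0,\varphi}$ is equivalent under resolution to its elimination onto $C_0$, which coincides up to weak equivalence with $\mathcal{G}^{(1)}_{X_0,\varphi}$.

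The main obstacle I anticipate is keeping the compatibility of elimination (and hence of restriction) with the successive blow ups: the transversality of $C_i$ to the new exceptional divisor and to the tangent space of the transformed $\mathcal{G}^{(n+1)}_{\varphi,i}$ at $\xi_i$ must persist at every step. This is precisely what allows one to lift and descend resolutions between $\mathcal{G}^{(n+1)}_{X_0,\varphi}$ and $\mathcal{G}^{(1)}_{X_0,\varphi}$ inductively, and is guaranteed by the fact that the strict transform of a smooth curve under the blow up at a point on it remains smooth, with tangent direction transversal to the new exceptional divisor.
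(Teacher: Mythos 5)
Your overall architecture (singular locus of the amalgama as intersection of the two factors, factorwise stability under local sequences, then restriction to $C_0$) matches the paper's proof, but there is a genuine gap at the step where you claim that $\mathcal{G}^{(n+1)}_{\tilde{X}_0}$ represents $\mathrm{\underline{Max}\; mult}(\tilde{X}_0^{(d+1)})$ ``because it is the pullback of $\mathcal{G}^{(n)}_X$ under the smooth morphisms in (\ref{diag:p}) and (\ref{diag:completion})''. The product with $\mathbb{A}^1_k$ in (\ref{diag:p}) is indeed one of the smooth morphisms allowed in a local sequence (Definition \ref{def:loc_seq_V}), so that part is covered by representability. But the map $\delta$ of (\ref{diag:completion}) is not: $(\mathcal{O}_{V^{(n)},\xi }\otimes _k K[[t]])_{\xi _0}$ is not of finite type over $k$, and $\delta $ is neither a restriction to an open subset, nor an \'etale extension, nor a product with an affine space, so Definition \ref{representable} gives you nothing for this base change. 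This is precisely the point the paper flags in advance in Section \ref{subsec:setting} (``we will need this fact in the proof of Proposition \ref{prop:eq_amalgama_ordenes}'') and handles by hand: since $\beta _X$ is finite, $S\otimes _kK[[t]]\subset B\otimes _kK[[t]]$ is again a finite extension satisfying the hypotheses of \cite[4.5]{V}, so the argument of \cite[Proposition 5.7]{V} applies verbatim, the $f_i$ remain the minimal polynomials of the $\theta _i$ over $S\otimes _kK[[t]]$, and the local presentation --- hence the equality $\mathrm{Sing}(\mathcal{G}^{(n+1)}_{\tilde{X}_0})=\mathrm{\underline{Max}\; mult}(\tilde{X}_0^{(d+1)})$ and its stability --- persists after the base change. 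Without some such argument your computation of the first factor is unjustified.

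A second, lesser issue concerns the restriction step. Your identification of $\mathcal{G}^{(1)}_{X_0,\varphi }$ with $K[[t]][\varphi (f_1)W^{b_1},\ldots ,\varphi (f_{n-d})W^{b_{n-d}}]$ is correct, and the pointwise check at $\xi _0$ is fine, but an algebra of contact must have its singular locus identity preserved under \emph{all} local sequences as in (\ref{eq:seq_Z}), not only the sequence directed by $\varphi $; ``propagates iteratively through (\ref{diag:perm_seq2})'' addresses only the latter. The paper gets the full statement in one stroke from \cite[Proposition 6.6]{Br_G-E_V}: because $\mathcal{G}^{(n+1)}_{\tilde{X}_0}$ is differentially closed and $C_0$ is smooth, the tree of closed sets of the restriction equals the intersection of the trees, which is the tree of the amalgama. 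Your alternative ending via the $\tau $-invariant and elimination onto $C_0$ can be made to work in characteristic zero (the elements $h_iW$ do force $\tau \geq n$ at $\xi _0$, and in the presence of degree-one elements elimination agrees with restriction up to weak equivalence), but it silently requires the very hypotheses you never invoke --- the algebra must be differentially closed, or replaced by its differential closure, and the projection must be admissible in the sense of Definition \ref{def:elim_alg} --- so as written it does not close this second gap either.
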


\begin{proof}
By definition of $\mathcal{G}_{X_0,\varphi }^{(n+1)}$, 
$$\mathcal{F}_{\tilde{V}_0}\left( \mathcal{G}_{X_0,\varphi }^{(n+1)}\right) =\mathcal{F}_{\tilde{V}_0}\left( \mathcal{G}_{\tilde{X}_0}^{(n+1)}\right) \cap \mathcal{F}_{\tilde{V}_0}\left( \mathcal{G}_{\varphi }^{(n+1)}\right)$$
(see Definition \ref{def:tree}). Then, $\mathcal{G}_{X_0,\varphi }^{(n+1)}$ is an algebra of contact of $\varphi $ with $\mathrm{\underline{Max}\; mult}(X)$ as long as $\mathcal{G}_{\tilde{X}_0}^{(n+1)}$ represents $\mathrm{\underline{Max}\; mult}(\tilde{X_0})$ and $\mathcal{G}^{(n+1)}_{\varphi }$ represents $C_0$ in the sense of Definition \ref{representable}. The latter was already shown at the begining of this section. For the first assertion, we may assume that locally we are in the situation of Example \ref{ex3}, and with the notation there, we have now that $S\otimes _kK[[t]] \subset B\otimes _kK[[t]]=S[\theta _1,\ldots ,\theta _{n-d}]\otimes _kK[[t]]$ is a finite extension of rings satisfying the properties in \cite[4.5]{V}, and therefore the argument in \cite[Proposition 5.7]{V} is also valid for them: $\xi \in \mathrm{\underline{Max}\; mult}(\tilde{X_0})$ if and only if $\mathrm{ord}_{\xi }f_i\geq n_i$ for $i=1,\ldots ,n-d$, so the $f_i$ are also the minimal polynomials of the $\theta _i$ over $S\otimes K[[t]]$.\\
\\
On the other hand, by \cite[Proposition 6.6]{Br_G-E_V}
$$\mathcal{F}_{C_0}\left( \left. \mathcal{G}_{\tilde{X}_0}^{(n+1)}\right| _{C_0}\right) =\mathcal{F}_{\tilde{V}_0}\left( \mathcal{G}_{\tilde{X}_0}^{(n+1)}\right) \cap \mathcal{F}_{\tilde{V}_0}\left( \mathcal{G}_{\varphi }^{(n+1)}\right) \mbox{,}$$
since $\mathcal{G}_{\tilde{X}_0}^{(n+1)}$ is differentially closed, and $C_0$ is smooth. Hence, it is clear that the Rees algebra $\left. \mathcal{G}_{\tilde{X}_0}^{(n+1)}\right| _{C_0}$ defines the same tree of closed sets as $\mathcal{G}_{X_0,\varphi }^{(n+1)}$. In addition, the restriction of $\mathcal{G}_{X_0,\varphi }^{(n+1)}$ to $C_0$ defines the very same tree, since
$$\mathcal{F}_{\tilde{V}_0}\left( \mathcal{G}_{X_0,\varphi }^{(1)}\right) :=\mathcal{F}_{\tilde{V}_0}\left( \left. \mathcal{G}_{\tilde{X}_0}^{(n+1)}\right| _{C_0}\odot \left. \mathcal{G}_{\varphi }^{(n+1)}\right| _{C_0}\right) =\mathcal{F}_{\tilde{V}_0}\left( \left. \mathcal{G}_{\tilde{X}_0}^{(n+1)}\right| _{C_0}\right) \cap \mathcal{F}_{\tilde{V}_0}\left( \left. \mathcal{G}_{\varphi }^{(n+1)}\right| _{C_0}\right) =\mathcal{F}_{C_0}\left( \left. \mathcal{G}_{\tilde{X}_0}^{(n+1)}\right| _{C_0}\right) \mbox{,}$$
and the proposition is proved.\\
\end{proof}

The following definition will give us a tool to compute the algebra $\mathcal{G}^{(1)}_{X_0,\varphi }$ that appears in the last Proposition. This will become quite useful in Section \ref{sec:proof}:

\begin{Def}\label{def:image_G_by_arc}
With the notation in Section \ref{subsec:setting}, let $\mathcal{G}$ be a Rees algebra over $V^{(n)}$ given as
$$\mathcal{G}=\mathcal{O}_{V^{(n)},\xi }[g_1W^{c_1},\ldots ,g_{s}W^{c_s}]$$
locally at $\xi $. Then, for any arc $\varphi \in \mathcal{L}_{\xi }(V^{(n)})$, we define
$$\varphi (\mathcal{G})=K[[t]][\varphi (g_1)W^{c_1},\ldots ,\varphi (g_{s})W^{c_s}]\mbox{.}$$
\end{Def}

\begin{Rem}
With the notation in Section \ref{subsec:setting}, we may define the image by $\tilde{\Gamma }_0$ of the Rees algebra $\mathcal{G}^{(n+1)}_{X_0,\varphi }$ from (\ref{eq:thm_amalgama}). This algebra $\tilde{\Gamma }_0(\mathcal{G}^{(n+1)}_{X_0,\varphi })$ happens to be the restriction of the algebra $\mathcal{G}^{(n+1)}_{X_0,\varphi }$ to the curve $C_0$ defined by $\varphi $, and the proof of Proposition \ref{prop:eq_amalgama_ordenes} shows that if $\mathcal{G}^{(n+1)}_{X_0}={\mathcal O}_{V_0^{(n+1)}}[g_1W^{c_1},\ldots ,g_{s}W^{c_s}]$, then 
$$\tilde{\Gamma }_0(\mathcal{G}^{(n+1)}_{X_0,\varphi })=K[[t]][\varphi (g_1)W^{c_1},\ldots ,\varphi (g_s)W^{c_{s}}]\mbox{,}$$
since $\tilde{\Gamma }_0(h_i)=0$ for $i=1,\ldots ,n$.\\
\end{Rem}

Our goal now is to define an invariant for $X$, $\xi $ and $\varphi $ using the algebra of contact of $\varphi $ with $\mathrm{\underline{Max}\; mult}(X)$. However, Proposition \ref{prop:eq_amalgama_ordenes} shows that it would also make sense to define it from the restriction $\mathcal{G}_{X_0,\varphi }^{(1)}$ to $C_0$. In addition, from the way in which $\mathcal{G}_{X_0,\varphi }^{(n+1)}$ is constructed, we know that it has elements of order $1$ in weight $1$, and hence has order $1$ itself\footnote{Note that $\mathcal{G}_{\varphi }^{(n+1)}$ has order one (see (\ref{ec:RA_arc})).} at all points of its singular locus. On contrary, the order of $\mathcal{G}_{X_0,\varphi }^{(1)}$ will be much more interesting, as we will see in Proposition \ref{thm:r_elim_amalgama}.

\begin{Def}\label{def:r}
Let $X$ be a variety, and let $\varphi $ be an arc in $X$ through $\xi \in \mathrm{\underline{Max}\; mult}(X)$. We define the \textit{order of contact} of $\varphi $ with $\mathrm{\underline{Max}\; mult}(X)$ as the order\footnote{As we have done already, we will write $\xi $ for the image of $\xi $ under most of the morphisms we use, as long as the identification between both points is clear.} at $\xi $ of the restriction $\mathcal{G}_{X_0,\varphi }^{(1)}$ to $C_0$ of the algebra of contact of $\varphi $ with $\mathrm{\underline{Max}\; mult}(X)$, and we write it by
\begin{equation*}
r_{X,\varphi }=\mathrm{ord}_{\xi }(\mathcal{G}_{X_0,\varphi }^{(1)})\in \mathbb{Q}\mbox{.}
\end{equation*}
We denote by $r_X$ the infimum of the orders of contact of $\mathrm{\underline{Max}\; mult}(X)$ with all arcs in $X$ through $\xi $:
\begin{equation*}
r_X=\inf _{\varphi \in \mathcal{L}_{\xi }(X)}\left\{ \mathrm{ord}_{\xi } (\mathcal{G}_{X_0,\varphi }^{(1)})\right\} \in \mathbb{R}\mbox{.}
\end{equation*}
\end{Def}

\begin{Rem}
We have defined an invariant $r_{X,\varphi }$ for the pair $(X,\varphi )$ and another invariant $r_X$ for $X$: by Hironaka's trick (see \cite[Section 7]{E_V97}), it can be shown that $r_{X,\varphi }$ depends only on $X$, $\xi $ and $\varphi $, not on the choice of the algebra of contact (which is not unique). For the same reason $r_X$ depends only on $X$ and on the point $\xi $ we are looking at.
\end{Rem}

\begin{Def}
Normalizing $r_{X,\varphi }$ and $r_X$ by the order of the respective arcs (see Definition \ref{def:order_arc}) we define new invariants. We denote
\begin{equation*}
\bar{r}_{X,\varphi }=\frac{\mathrm{ord}_{\xi }(\mathcal{G}_{X_0,\varphi }^{(1)})}{\mathrm{ord}(\varphi )}\in \mathbb{Q}\mbox{,}
\end{equation*}
and
\begin{equation*}
\bar{r}_X=\inf _{\varphi \in \mathcal{L}_{\xi }(X)}\left\{ \frac{\mathrm{ord}_{\xi } (\mathcal{G}_{X_0,\varphi }^{(1)})}{\mathrm{ord}(\varphi )}\right\} \in \mathbb{R}\mbox{.}
\end{equation*}
\end{Def}

We give now a more complete version of Theorem \ref{thm:main_order-r}, which we will prove in Section \ref{sec:proof}:

\begin{Thm}\label{thm:main}
Let $X$ be an algebraic variety of dimension $d$ and $\xi $ a point in $\mathrm{\underline{Max}\; mult}(X)$. Then
\begin{equation*}
\bar{r}_X=\mathrm{ord}_{\xi }\mathcal{G}^{(d)}_X\in \mathbb{Q}\mbox{.}
\end{equation*}

Moreover, the infimum $\bar{r}_X$ is indeed a minimum.\\
\end{Thm}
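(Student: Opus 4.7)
The plan is to follow the roadmap the paper itself suggests: establish Theorem \ref{thm:main} first for hypersurfaces, then bootstrap to arbitrary $X$ by exploiting the decomposition of $\mathcal{G}^{(n)}_{X}$ coming from a local presentation (Example \ref{ex:elim_gral}). In both cases the argument has two halves, a lower bound $\bar{r}_{X,\varphi}\geq \mathrm{ord}_{\xi}\mathcal{G}^{(d)}_X$ valid for every arc, and the construction of a specific arc that realises the bound, which is what turns the infimum into a minimum.

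Assume first that $X=V(f)$ is a hypersurface and, by Example \ref{ex2}, that $f$ is in Tschirnhausen form $f=x^{b}+B_{b-2}x^{b-2}+\dots+B_{0}$ with $B_i\in S$ and $\mathrm{ord}_{\xi}B_i\geq b-i$. Combining Lemma \ref{lema:generadores_algebra} with the Remark following Definition \ref{def:image_G_by_arc}, the restriction of $\mathcal{G}^{(n+1)}_{X_0,\varphi}$ to the curve $C_0$ is
\[
\mathcal{G}^{(1)}_{X_0,\varphi}=K[[t]]\bigl[\varphi_{x}W,\;\varphi(B_{b-2})W^{2},\dots,\varphi(B_{0})W^{b}\bigr].
\]
Because $\varphi\in \mathcal{L}_\xi(X)$, the relation $\varphi(f)=0$ gives
$\varphi_{x}^{b}=-\sum_{i<b}\varphi(B_{i})\varphi_{x}^{i}$,
so a Newton-polygon style inequality forces $\mathrm{ord}_t\varphi_x\geq \min_i \mathrm{ord}_t\varphi(B_i)/(b-i)$. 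Thus
$r_{X,\varphi}=\min_i \mathrm{ord}_t\varphi(B_i)/(b-i)$.
Using the elementary bound $\mathrm{ord}_t\varphi(B_i)\geq \mathrm{ord}_\xi(B_i)\cdot\mathrm{ord}(\varphi)$ (which holds because $B_i\in S$ and the $\varphi_{z_j}$ have $t$-order at least $\mathrm{ord}(\varphi)$) together with Lemma \ref{lema:ord_algebra_eliminacion} yields $\bar{r}_{X,\varphi}\geq \mathrm{ord}_{\xi}\mathcal{G}^{(d)}_{X}$. To prove equality is attained, pick generic units $\alpha_{1},\dots,\alpha_{d}\in\bar k$ and a positive integer $N$ divisible by the denominator of the rational number $\mathrm{ord}_\xi\mathcal{G}^{(d)}_{X}$; set $\varphi_{z_j}=\alpha_j t^{N}$. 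The genericity of the $\alpha_j$ ensures $\mathrm{ord}_t\varphi(B_i)=N\,\mathrm{ord}_\xi(B_i)$ for every $i$, and Newton--Puiseux produces $\varphi_{x}\in K[[t]]$ (after a finite extension $K/k$) of order exactly $N\,\mathrm{ord}_\xi\mathcal{G}^{(d)}_{X}$ with $f(\varphi_x,\varphi_{z})=0$. Then $\mathrm{ord}(\varphi)=N$ and $\bar{r}_{X,\varphi}=\mathrm{ord}_\xi\mathcal{G}^{(d)}_{X}$.

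For the general case, use the local presentation from Example \ref{ex3}, choosing each $f_i\in S[x_i]$ already in Tschirnhausen form, say $f_i=x_i^{b_i}+\sum_j B^{(i)}_j x_i^{j}$. Example \ref{ex:elim_gral} gives
\[
\mathcal{G}^{(n)}_X=\mathcal{O}_{V^{(n)}}\bigl[x_1W,\dots,x_{n-d}W\bigr]\odot\mathcal{G}^{(d)}_{H_1}\odot\cdots\odot\mathcal{G}^{(d)}_{H_{n-d}},\qquad \mathcal{G}^{(d)}_X=\mathcal{G}^{(d)}_{H_1}\odot\cdots\odot\mathcal{G}^{(d)}_{H_{n-d}},
\]
hence $\mathrm{ord}_\xi\mathcal{G}^{(d)}_X=\min_i\mathrm{ord}_\xi\mathcal{G}^{(d)}_{H_i}$. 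Restricting the resulting amalgamated algebra $\mathcal{G}^{(n+1)}_{X_0,\varphi}$ to $C_0$ gives
\[
\mathcal{G}^{(1)}_{X_0,\varphi}=K[[t]]\bigl[\varphi_{x_i}W,\;\varphi(B^{(i)}_{j})W^{b_i-j}\bigr]_{i,j},
\]
and the relations $\varphi(f_i)=0$ allow the same Newton-polygon argument index by index to conclude $r_{X,\varphi}=\min_{i,j}\mathrm{ord}_t\varphi(B^{(i)}_j)/(b_i-j)$, giving $\bar{r}_{X,\varphi}\geq \mathrm{ord}_\xi\mathcal{G}^{(d)}_X$ exactly as before.

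The main obstacle is the final step: producing a single arc in $X$ (not merely in the ambient scheme or in any one $H_i$) that simultaneously attains the minimum. The construction must respect the finite transversal projection $\beta_X\colon X\to V^{(d)}$ from Example \ref{ex3}, so that the image arc $\beta_X\circ\varphi$ is the generic arc $(\alpha_{1}t^{N},\dots,\alpha_{d}t^{N})$ on $V^{(d)}$ while the lifts $\varphi_{x_i}$ are chosen coherently as Newton--Puiseux roots of the $f_i$ over $K((t))$. One must check that such a coherent choice defines an honest arc in $\mathcal{L}_\xi(X)$, which amounts to verifying that the lift extends compatibly across the local presentation; this relies on the separate-variables property $f_i\in S[x_i]$ together with the finiteness and generic rank of $\beta_X$. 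Once this arc is in hand, the computation in the second paragraph, applied to the index $i_0$ realising $\mathrm{ord}_\xi\mathcal{G}^{(d)}_{X}=\mathrm{ord}_\xi\mathcal{G}^{(d)}_{H_{i_0}}$, yields $\bar{r}_{X,\varphi}=\mathrm{ord}_\xi\mathcal{G}^{(d)}_X$, proving both the equality and the fact that $\bar{r}_X$ is a minimum.
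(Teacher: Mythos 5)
Your architecture coincides with the paper's (hypersurface case split into a lower bound and a realizing arc, then the general case via the $\odot$-decomposition of the local presentation, exactly as in Theorems \ref{thm:desigualdad}, \ref{thm:igualdad}, \ref{thm:desigualdad_gral} and \ref{thm:igualdad_gral}), and your Newton-polygon derivation of $\mathrm{ord}_t\varphi_x\geq \min_i \mathrm{ord}_t\varphi(B_i)/(b-i)$ from $\varphi(f)=0$ is precisely the paper's Lemma \ref{lemma:ord_amalgama_final}. But your claimed identity $r_{X,\varphi}=\min_i \mathrm{ord}_t\varphi(B_i)/(b-i)$ is an overstatement: $\mathcal{G}^{(d)}_X$ is the \emph{differential closure} $\mathrm{Diff}(S[B_{b-2}W^2,\ldots,B_0W^b])$, so the restriction to $C_0$ also contains the images of the derivatives of the $B_i$, and along a non-diagonal arc these can have strictly smaller weighted $t$-order than the $B_i$ themselves (take $B=z_1z_2$ in weight $2$ and $\varphi(z_1)=t^5$, $\varphi(z_2)=t$: then $\mathrm{ord}_t\varphi(B)/2=3$ while the first derivative contributes order $1$). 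Only the inequality $r_{X,\varphi}\leq\min_i \mathrm{ord}_t\varphi(B_i)/(b-i)$ is automatic. Your lower bound survives, because the elementary estimate $\mathrm{ord}_t\varphi^{(d)}(g)\geq \mathrm{ord}(\varphi)\cdot\mathrm{ord}_{\xi}(g)$ holds for \emph{every} $g\in S$, hence for every element $gW^l\in\mathcal{G}^{(d)}_X$, not just the $B_i$; this is exactly how the paper's Lemma \ref{lema:arcos_diagonales} is run, and your argument should be phrased over all elements of the algebra rather than routed through the false equality.

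The genuine gap is the one you flag yourself and then leave open: in the general case you never prove that coherently chosen Newton--Puiseux roots of the $f_i$ over the diagonal arc assemble into an arc in $\mathcal{L}_{\xi}(X)$. This is not a formality: $(f_1,\ldots,f_{n-d})\subset I(X)$ is in general a strict inclusion, so $X$ is strictly contained in $\{f_1=\cdots=f_{n-d}=0\}$, and an independent choice of roots produces an arc on the complete intersection that may miss $X$ entirely. The paper closes exactly this point with Lemma \ref{lemma:lifting_arcs}, by a different mechanism than yours: setting $\mathcal{P}=\mathrm{Ker}(\bar{\varphi}^{(d)})$, it uses lying-over for the finite morphism $\beta_X^*\colon\mathcal{O}_{V^{(d)},\xi^{(d)}}\to\mathcal{O}_{X,\xi}$ to find a prime $\mathcal{Q}$ with $\mathcal{Q}\cap\mathcal{O}_{V^{(d)},\xi^{(d)}}=\mathcal{P}$, checks by finiteness that $\mathcal{Q}$ cuts out a curve $C\subset X$ through $\xi$, and takes an arc on $C$; its projection has the same kernel as $\bar{\varphi}^{(d)}$, hence is diagonal and generic by Remarks \ref{rem:comp_diag_arcs} and \ref{rem:lifting_arcs}, and the coordinates $\varphi_i$ are automatically coherent because they come from one honest arc on $X$ (with simultaneous genericity for all $\mathcal{G}^{(d)}_{H_i}$ secured in Remark \ref{rem:lifting_arcs_gral}); the order bookkeeping is then finished by Lemma \ref{lemma:tau_mayor} and Remark \ref{rem:order_arc_id} rather than by selecting minimal-order roots. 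A secondary flaw: even in the hypersurface case, where a Puiseux root does give an arc on $X=V(f)$, the root need not lie in $K[[t]]$ --- divisibility of $N$ by the denominator of $\mathrm{ord}_{\xi}\mathcal{G}^{(d)}_X$ controls only the leading exponent, not the tail --- so you need the (harmless, since it preserves $\bar{r}_{X,\varphi}$) reparametrization $t\mapsto t^m$, which you do not mention. Your minimal-order-root trick, which forces $\mathrm{ord}_t\varphi_x\geq N$ and hence $\mathrm{ord}(\varphi)=N$, is a legitimate way to bypass the paper's case analysis in the hypersurface setting; but as written the general case rests on an unproved lifting step, and that step is precisely the content the paper supplies.
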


Equivalently, for every arc $\varphi \in \mathcal{L}(X)$ through $\xi $, 
$$\bar{r}_{X,\varphi }\geq \mathrm{ord}_{\xi }\mathcal{G}^{(d)}_X\mbox{,}$$
and in addition, one can find an arc $\varphi _0\in \mathcal{L}(X)$ through $\xi $ such that
$$\bar{r}_{X,\varphi _0}=\mathrm{ord}_{\xi }\mathcal{G}^{(d)}_X\mbox{.}$$

We already mentioned at the end of Section \ref{subsec:RAandNash} that $r_{X,\varphi }$ is a refinement of $\rho _{X,\varphi }$. The following proposition shows that in fact $\rho _{X,\varphi }$ may be obtained from $r_{X,\varphi }$.

\begin{Prop}\label{thm:r_elim_amalgama} 
Let $X$ be a variety, let $\xi $ be a point in $\mathrm{\underline{Max}\; mult}(X)$ and let $\varphi $ be an arc in $X$ through $\xi $. Then
\begin{equation}
\rho _{X,\varphi }=\left[ r_{X,\varphi }\right] \mbox{.}
\end{equation}
That is, the persistance of $\varphi $ in $X$ (Definition \ref{def:rho}) equals the integral part of the order of contact of $\varphi $ with $\mathrm{\underline{Max}\; mult}(X)$.
\end{Prop}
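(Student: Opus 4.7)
The plan is to reduce the question to a computation on the smooth curve $C_0$, where the Rees algebra structure becomes completely transparent, and then to compare the decrease of the order of $\mathcal{G}_{X_0,\varphi}^{(1)}$ under the blow ups directed by $\varphi$ against the condition for its singular locus to become empty. By Proposition \ref{prop:eq_amalgama_ordenes}, $\mathcal{G}_{X_0,\varphi}^{(1)}$ is itself an algebra of contact of $\varphi$ with $\mathrm{\underline{Max}\, mult}(X)$, so the Nash multiplicity remains equal to $m$ after the first $j$ blow ups directed by $\varphi$ if and only if $\xi_j$ lies in the singular locus of the $j$-th transform of $\mathcal{G}_{X_0,\varphi}^{(1)}$. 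Thus $\rho_{X,\varphi}$ is exactly the number of induced transformations needed to resolve $\mathcal{G}_{X_0,\varphi}^{(1)}$ along the sequence (\ref{diag:perm_seq2}).

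By the remark following Definition \ref{def:image_G_by_arc}, and since the generators $h_iW$ of $\mathcal{G}_{\varphi}^{(n+1)}$ vanish on $C_0$, the restriction admits the description
\[
\mathcal{G}_{X_0,\varphi}^{(1)} = K[[t]]\bigl[\varphi(f_1)W^{b_1},\ldots ,\varphi(f_{n-d})W^{b_{n-d}}\bigr].
\]
Setting $a_i:=\mathrm{ord}_t\varphi(f_i)$, each $\varphi(f_i)$ equals $t^{a_i}$ times a unit of $K[[t]]$, and the formula of \cite[Proposition 6.4.1]{E_V} for the order of a Rees algebra gives
\[
r_{X,\varphi}=\mathrm{ord}_{\xi_0}(\mathcal{G}_{X_0,\varphi}^{(1)})=\min_{i=1,\ldots,n-d}\left\{\frac{a_i}{b_i}\right\}.
\]

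I would next analyse the effect of one blow up. The restriction $\tilde{\pi}_1|_{C_1}:C_1\to C_0$ is an isomorphism (because $C_0$ is smooth and $\xi_0$ is a point on it), giving an identification $C_1\cong \mathrm{Spec}(K[[t]])$; the exceptional divisor $\tilde{E}_1$ meets $C_1$ transversally at $\xi_1$, so its pull back to $C_1$ is the divisor $(t)$. Applying the transformation rule (\ref{eq:transf_law}) to each generator $t^{a_i}W^{b_i}$ produces $t^{a_i-b_i}W^{b_i}$, so the order of every generator—and hence their minimum—drops by exactly one at each step. Iterating, the order at $\xi_k$ of the $k$-th transform equals $r_{X,\varphi}-k$ for as long as all preceding centres lie in the successive singular loci.

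To finish, I would observe that on a one dimensional regular ambient space the singular locus of a Rees algebra is non empty at its unique point if and only if the order there is at least $1$. Consequently $\rho_{X,\varphi}$ is the smallest integer $k$ with $r_{X,\varphi}-k<1$, and a direct case distinction (according to whether $r_{X,\varphi}$ is an integer) shows this number equals $[r_{X,\varphi}]$. The delicate technical point is to verify that the strict transforms $C_i$ can be coherently identified with $\mathrm{Spec}(K[[t]])$ so that each successive exceptional divisor pulls back to the maximal ideal; once this normal form is justified, the uniform drop of the order by one per blow up, and hence the proposition, follow immediately.
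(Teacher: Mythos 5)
Your proposal is correct and takes essentially the same approach as the paper: both reduce via Proposition \ref{prop:eq_amalgama_ordenes} to the one-dimensional Rees algebra $\mathcal{G}_{X_0,\varphi }^{(1)}$ on the smooth curve, observe that it is monomial there and that each permissible transform drops its order by exactly one, and conclude that resolution occurs precisely when the order falls below $1$, giving $\rho _{X,\varphi }=\left[ r_{X,\varphi }\right] $. The only cosmetic difference is that the paper first passes to an integrally equivalent algebra with a single generator $T^{\alpha }W^{l}$ (via \cite[Lemma 1.7]{Br_G-E_V}), whereas you keep the explicit generators $\varphi (f_i)W^{b_i}$ and track the minimum of the $a_i/b_i$, which is an equally valid bookkeeping of the same computation.
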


\begin{proof}
Since $\mathcal{G}_{X_0,\varphi }^{(1)}$ is a Rees algebra over a smooth curve, it is defined over a regular local ring $ \mathcal{O}_{C_0,\xi}$ of dimension one. 
If the maximal ideal $\mathcal{M}_{\xi }$ of $\xi $ in $\mathcal{O}_{C_0,\xi }$ is $\mathcal{M}_{\xi }=<T>$ for some regular parameter $T$, then $\mathcal{G}_{X_0,\varphi }^{(1)}$ is necesarily generated by a finite set of elements of the form $T^{\alpha }W^{l_{\alpha}}$, where $\alpha ,l_{\alpha }$ are positive integers. Observe also that $\mathcal{G}_{X_0,\varphi }^{(1)}$ is integrally equivalent to a Rees algebra generated by $JW^l$ for some principal ideal $J\subset \mathcal{O}_{C_0,\varphi }$ and some positive integer $l$, at least in a neighbourhood of $\xi $ (see \cite[Lemma 1.7]{Br_G-E_V}). Therefore, we can suppose that $\mathcal{G}_{X_0,\varphi }^{(1)}=\mathcal{O}_{C_0,\xi }[T^{\alpha }W^l]$. In this case, the order of $\mathcal{G}_{X_0,\varphi }^{(1)}$ at $\xi $ will be given by 
$$\mathrm{ord}_{\xi }(\mathcal{G}_{X_0,\varphi }^{(1)})=\frac{\alpha }{l}\mbox{.}$$
By the transformation law (\ref{eq:transf_law}), the first transform of $\mathcal{G}_{X_0,\varphi }^{(1)}$ by blowing up at the closed point is
$$\mathcal{G}_{X_0,\varphi ,1}^{(1)}=\mathcal{O}_{C_0,\xi }[T^{\alpha -l}W^l]\mbox{.}$$
The order of the $k$-th transform will therefore be
$$\frac{\alpha -k\cdot l}{l}\mbox{,}$$
and the number $\rho _{X,\varphi }$ of blow ups needed to resolve $\mathcal{G}_{X_0,\varphi }^{(1)}$ must satisfy:
$$0\leq \alpha -\rho _{X,\varphi }\cdot l<l\mbox{.}$$
But this implies
$$0\leq \frac{\alpha }{l}-\rho _{X,\varphi }<1\mbox{,}$$
which means that $\rho _{X,\varphi }$ is the integral part of $\frac{\alpha }{l}=\mathrm{ord}_{\xi }(\mathcal{G}_{X_0,\varphi }^{(1)})$, which is precisely the order of contact of $\varphi $ with $\mathrm{\underline{Max}\; mult}(X)$.
\end{proof}

\begin{Cor}
For any variety $X$, 
$$\rho _X=[ r_{X}] \mbox{,}$$
$$[\bar{r}_X]\leq \bar{\rho }_X\leq \bar{r}_X\mbox{.}$$
\end{Cor}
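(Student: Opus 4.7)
The plan is to reduce everything to the single identity $\rho_{X,\varphi}=[r_{X,\varphi}]$ from Proposition \ref{thm:r_elim_amalgama}, and then pass to the infimum over arcs, handling the interaction between the floor function and the infimum carefully.

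First I would establish the auxiliary identity $r_X=\bar r_X$. The inequality $r_X\geq \bar r_X$ is immediate: for every arc $\varphi$ one has $r_{X,\varphi}=\mathrm{ord}(\varphi)\cdot \bar r_{X,\varphi}\geq \bar r_X$ since $\mathrm{ord}(\varphi)\geq 1$. For the converse, invoke Theorem \ref{thm:main}: the infimum defining $\bar r_X$ is actually a minimum, achieved by some arc $\varphi_0$. Using the remark after Theorem \ref{thm:main_order-r} (the minimum is achieved by any arc sufficiently general with respect to the tangent cone), we may assume $\mathrm{ord}(\varphi_0)=1$; then $r_{X,\varphi_0}=\bar r_X$, so $r_X\leq \bar r_X$. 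Hence $r_X=\bar r_X$ and this common value is attained.

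For the first identity $\rho_X=[r_X]$: by monotonicity of $[\cdot]$ and $r_{X,\varphi}\geq r_X$, we get $\rho_{X,\varphi}=[r_{X,\varphi}]\geq [r_X]$, so $\rho_X\geq [r_X]$. Conversely, the arc $\varphi_0$ above satisfies $\rho_{X,\varphi_0}=[r_{X,\varphi_0}]=[r_X]$, giving $\rho_X\leq [r_X]$.

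For the second chain, the upper bound is pointwise trivial:
\begin{equation*}
\bar\rho_{X,\varphi}=\frac{[r_{X,\varphi}]}{\mathrm{ord}(\varphi)}\leq \frac{r_{X,\varphi}}{\mathrm{ord}(\varphi)}=\bar r_{X,\varphi},
\end{equation*}
so taking the infimum gives $\bar\rho_X\leq \bar r_X$. For the lower bound, use the elementary fact that $[Na]\geq N[a]$ for every positive integer $N$ and every real $a\geq 0$ (since $Na=N[a]+N\{a\}$ and $N[a]\in\mathbb{Z}$). Applied with $N=\mathrm{ord}(\varphi)$ and $a=\bar r_X$, together with $r_{X,\varphi}\geq \mathrm{ord}(\varphi)\,\bar r_X$, one obtains
\begin{equation*}
[r_{X,\varphi}]\geq [\mathrm{ord}(\varphi)\,\bar r_X]\geq \mathrm{ord}(\varphi)\,[\bar r_X],
\end{equation*}
hence $\bar\rho_{X,\varphi}\geq [\bar r_X]$ for every arc $\varphi$, and therefore $\bar\rho_X\geq [\bar r_X]$.

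The only delicate point is the existence of an order-one arc realising the minimum in $\bar r_X$; everything else is bookkeeping about floor functions and infima. Since this existence is already guaranteed by Theorem \ref{thm:main} and the genericity remark, the whole corollary reduces to a short formal argument.
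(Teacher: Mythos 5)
Your handling of the second chain is correct and is exactly the kind of argument the paper's one-line proof alludes to: $\bar\rho_{X,\varphi}=[r_{X,\varphi}]/\mathrm{ord}(\varphi)\leq \bar r_{X,\varphi}$ pointwise gives $\bar\rho_X\leq\bar r_X$, and $[r_{X,\varphi}]\geq[\mathrm{ord}(\varphi)\,\bar r_X]\geq \mathrm{ord}(\varphi)\,[\bar r_X]$ gives $\bar\rho_X\geq[\bar r_X]$; likewise the direction $\rho_X\geq[r_X]$ is fine. The genuine gap is your auxiliary identity $r_X=\bar r_X$, on which your proof of $\rho_X\leq[r_X]$ entirely rests: it is false, and so is the step ``we may assume $\mathrm{ord}(\varphi_0)=1$''. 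The remark after Theorem \ref{thm:main_order-r} concerns genericity with respect to the tangent cone and says nothing about the order of the minimizing arc; in the paper that arc is produced by Lemma \ref{lemma:lifting_arcs} as a lift of a diagonal arc through the finite projection $\beta_X$, and its order $N=\mathrm{ord}_t(g'(t))$ (Remark \ref{rem:order_arc_id}) is imposed by the geometry of $X$, not chosen. Concretely, take the cusp $X=\{x^2-z^3=0\}$ at the origin $\xi$, so $b=2$, $B_0=-z^3$ and $\mathrm{ord}_{\xi}\mathcal{G}_X^{(1)}=3/2$ by Lemma \ref{lema:ord_algebra_eliminacion}. Every nonconstant arc through $\xi$ factors through the normalization, so $\varphi(z)=h(t)^2$, $\varphi(x)=h(t)^3$ with $m=\mathrm{ord}_t(h)\geq 1$; then $\mathrm{ord}(\varphi)=2m$ and, by Lemma \ref{lemma:ord_amalgama_final}, $r_{X,\varphi}=\mathrm{ord}_t(\varphi(z)^3)/2=3m$. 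Hence $r_X=3$ while $\bar r_X=3/2$, every such arc is diagonal-generic, and no arc through $\xi$ has order $1$.

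The conclusion $\rho_X=[r_X]$ is nevertheless true, and the correct route needs no minimizing arc at all: for any family of reals bounded below one has $\inf_i[a_i]=[\inf_i a_i]$ (monotonicity gives $\geq$; conversely, writing $c=\inf_j a_j$ and choosing $i$ with $a_i<[c]+1$, the inequalities $[c]\leq c\leq a_i<[c]+1$ force $[a_i]=[c]$). Applying this with $a_\varphi=r_{X,\varphi}$ and Proposition \ref{thm:r_elim_amalgama} yields $\rho_X=\inf_{\varphi}[r_{X,\varphi}]=[\inf_{\varphi}r_{X,\varphi}]=[r_X]$ directly, with no attainment hypothesis — this purely formal exchange of the integral part with the infimum, together with the two estimates you did prove correctly, is what the paper means by ``algebraic manipulations of their integral parts''.
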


The proof follows solely from the definitions of $r_X$, $\bar{r}_X$, $\rho _X$ and $\bar{\rho }_X$ together with Proposition \ref{thm:r_elim_amalgama}, by means of algebraic manipulations of their integral parts.

In what follows, we will give the proof of Theorem \ref{thm:main} by focusing first on the hypersurface case and generalizing then to arbitrary codimension.\\

\section{Proof of the main result}\label{sec:proof}

For the proof of Theorem \ref{thm:main}, we assume first that $X$ is a hypersurface in Theorems \ref{thm:desigualdad} and \ref{thm:igualdad}. Later on, we will see that we can deduce the proof of the general case from the hypersurface one in Theorems \ref{thm:desigualdad_gral} and \ref{thm:igualdad_gral}.

\subsection{Rees algebras and orders for a hypersurface}\label{subsec:Rees_algebras_orders_hypers}

For any variety $X$ which is locally a hypersurface, we can always find a nice expression for $X$ in an \'etale neighbourhood of each point. Using this expression, we will prove Theorem \ref{thm:main} for the hypersurface case by dividing it into two theorems: Theorem \ref{thm:desigualdad} states that $\mathrm{ord}_{\xi }\mathcal{G}_X^{(d)}$ from Section \ref{subsec:alg_res} is a lower bound of $\bar{r}_{X,\varphi }$ for any arc $\varphi \in \mathcal{L}_{\xi }(X)$, and Theorem \ref{thm:igualdad} shows that in fact we can find an arc giving the equality, so that $\bar{r}_X$ is actually a minimum. For the proof of these two, we will define diagonal arcs, which will help us analyzing the orders of contact and the order $\mathrm{ord}_{\xi }\mathcal{G}_X^{(d)}$ (see \ref{Prop_elim} 1 to 5, and Theorems \ref{thm:existe_elim_d} and \ref{thm:triv_orders}), and giving some conclusions and lemmas about them.\\

\begin{Par}\textbf{Notation and hypothesis\\}\label{subsubsec:setting_hyp}
Let $X=X^{(d)}$ be a $d$-dimensional variety over $k$ of maximum multiplicity $b$, and let $\xi \in \mathrm{\underline{Max}\, mult}(X)$. Let us suppose that $X$ at $\xi $ is locally a hypersurface, given by $\mathcal{O}_{X,\xi }\cong S[x]/(f)$ for a regular local $k$-algebra $S$ and a variable $x$, as in Example \ref{ex1}. As we did in (\ref{ec:f_Tsch}), we can suppose that $f$ has an expression of the form
\begin{equation}\label{ec:f_Tsch2}
f(x)=x^b+B_{b-2}x^{b-2}+\ldots + B_ix^i+\ldots +B_0
\end{equation}
in some \'etale neighbourhood of $\xi \in X$, with $B_0,\ldots B_{b-2}\in S$, and where we write $n=d+1$ for the dimension of the ambient space $V^{(n)}=\mathrm{Spec}(S[x])$. 
Consider $\mathcal{G}_{X}^{(d)}$, the elimination algebra of $\mathcal{O}_{V^{(n)},\xi }[fW^b]$ in $\mathcal{O}_{V^{(d)},\xi ^{(d)}}$ induced by the projection $\beta :V^{(n)}\longrightarrow V^{(d)}=\mathrm{Spec}(S)$ (see Theorem \ref{thm:existe_elim_d}), as the diagram shows:
\begin{equation}\label{diag:comm_compl-elim}
\xymatrix@R=0.7pc@C=3pt{
\mathcal{G}_{X_0}^{(n+1)}  &  & & & & & \ar[lllll] & \mathcal{G}_X^{(n)} & \ar[rrrrr] & & & & & & \mathcal{G}_{\tilde{X}_0}^{(n+1)} \\
\mathcal{O}_{V^{(n+1)}_0,\xi _0} & & & & & & & \mathcal{O}_{V^{(n)},\xi } \ar[lllllll]_{p^*} \ar[rrrrrrr]^{\delta } & & & & & & & (\mathcal{O}_{V^{(n)},\xi }\otimes _{k} K[[t]])_{\xi _0}\\
& & & & & \\
\mathcal{O}_{V_0^{(d+1)},\xi _0^{(d+1)}} \ar@{^{(}->}[uu] & & & & & & & \mathcal{O}_{V^{(d)},\xi ^{(d)}}  \ar@{^{(}->}[uu]^{\beta ^*} \ar[lllllll] \ar[rrrrrrr] & & & & & & & (\mathcal{O}_{V^{(d)},\xi _0^{(d)}}\otimes _{k} K[[t]])_{\xi _0^{(d+1)}}   \ar@{^{(}->}[uu]\\
\mathcal{G}_{X_0}^{(d+1)} & & & & & & \ar[lllll]  & \mathcal{G}_X^{(d)} & \ar[rrrrr] & & & & & & \mathcal{G}_{\tilde{X}_0}^{(d+1)} 
}
\end{equation}
where $\mathcal{G}_{X_0}^{(d+1)}$ is an elimination of $\mathcal{G}_{X_0}^{(n+1)}$. We have the following expression:
\begin{equation}\label{eq:expr_elim_prev}
\mathcal{G}_{X}^{(n)}=\mathrm{Diff}(\mathcal{O}_{V^{(n)},\xi } [fW^b]) =\mathcal{O}_{V^{(n)},\xi } [xW]\odot \mathcal{G}_{X}^{(d)}
\end{equation} 

(see Lemma \ref{lema:generadores_algebra2} for $\mathcal{G}_X^{(d)}$). Let $\varphi $ be an arc in $X$ through $\xi $, not contained in $\mathrm{\underline{Max}\, mult}(X)$. Suppose that $\varphi $ is such that $\varphi _{x}=u_0t^{\alpha _0}$ and $\varphi _{z_i}=u_it^{\alpha _i}$ for a regular system of parameters $\left\{ z_1,\ldots ,z_d\right\} \in S$, as in (\ref{diag:triangle}), where $u_0,\ldots ,u_d$ are units in $K[[t]]$ and $\alpha _0, \ldots \alpha _d$ are positive integers. This gives the following expressions for the algebra of contact of $\varphi $ with $\mathrm{\underline{Max}\, mult}(X)$ (see Proposition \ref{prop:eq_amalgama_ordenes}):
\begin{align}\label{eq:expr_amalgama}
\mathcal{G}_{X_0,\varphi }^{(n+1)}=\mathrm{Diff}(\OV [fW^b])\odot \OV [(x-u_0t^{\alpha _0})W,(z_i-u_{i}t^{\alpha _{i}})W; i=1,\ldots ,d]= \notag \\
=\OV [xW]\odot \mathcal{G}_{X}^{(d)}\odot \OV [(x-u_0t^{\alpha _0})W, (z_i-u_{i}t^{\alpha _{i}})W; i=1,\ldots ,d]\mbox{.}
\end{align} 
This expression will allow us to know the order of contact of $\varphi $ with $\mathrm{\underline{Max}\, mult}(X)$ (see Definition \ref{def:r}), which is our real interest.\\
\\
Let us recall that Properties \ref{Prop_elim} 1-4 guarantee that $\mathcal{G}_{X}^{(d)}$ represents $\beta (\mathrm{\underline{Max}\, mult}(X))$. Note now that the corresponding projection of $\varphi $ by $\beta $ gives also an arc $\varphi ^{(d)}$ in $V^{(d)}$ according to the following diagram
\begin{equation}\label{diag:proj_arc}
\xymatrix{
\mathcal{O}_{V^{(n)},\xi } \ar[r]^{\varphi } & K[[t]] \\
\mathcal{O}_{V^{(d)},\xi ^{(d)}}  \ar@{^{(}->}[u]^{\beta ^*} \ar[ur]_{\varphi ^{(d)}}
}
\end{equation}
Consider then the elimination algebra $\mathcal{G}_{X_0}^{(d+1)}$ above.  We can construct an algebra of contact of $\varphi ^{(d)}$ with $\beta (\mathrm{\underline{Max}\, mult}(X))$ by an analogous construction to that in (\ref{eq:thm_amalgama}), using the fact that $\mathcal{G}_{X}^{(d)}$ represents $\beta (\mathrm{\underline{Max}\, mult}(X))$. Then we obtain the $\mathcal{O}_{\tilde{V}^{(d+1)},\xi ^{(d+1)}}$-Rees algebra
\begin{equation}\label{eq:amalgama_sinx}
\mathcal{G}^{(d+1)}_{X_0,\varphi ^{(d)}}=\mathcal{G}_{X}^{(d)}\odot \mathcal{G}_{\varphi ^{(d)}}^{(d+1)}\mbox{.}
\end{equation}
Also $\mathcal{G}_{X_0,\varphi ^{(d)}}^{(1)}$ will be the restriction of $\mathcal{G}^{(d+1)}_{X_0,\varphi ^{(d)}}$ to the image of $C_0$ in $\tilde{V}_0^{(d+1)}$ (which we will denote by $C_0^{(d)}$). Note that $\mathcal{G}_{X_0,\varphi ^{(d)}}^{(1)}=\tilde{\Gamma }_0^{(d)}(\mathcal{G}^{(d+1)}_{X_0,\varphi ^{(d)}})$, where $\tilde{\Gamma }_0^{(d)}: (\mathcal{O}_{V^{(d)},\xi ^{(d)}} \otimes _{k}K[[t]])_{\xi _0^{(d)}}\rightarrow K[[t]]$ is given by $\varphi ^{(d)}:\mathcal{O}_{V^{(d)},\xi ^{(d)}}\rightarrow K[[t]]$ as in (\ref{diag:triangle}). With this notation we can write,
$$\mathcal{G}_{X_0,\varphi }^{(n+1)}=\OV [xW,t^{\alpha _0}W] \odot \mathcal{G}_{X}^{(d)}\odot \mathcal{G}_{\varphi ^{(d)}}^{(d+1)}=\OV [xW]\odot K[[t]][t^{\alpha _0}W]\odot \mathcal{G}_{X_0,\varphi ^{(d)}}^{(d+1)}$$
by (\ref{eq:expr_amalgama}) and (\ref{eq:amalgama_sinx}), and hence

$$\mathcal{G}_{X_0,\varphi }^{(1)}=K[[t]][t^{\alpha _0}W]\odot \mathcal{G}_{X_0,\varphi ^{(d)}}^{(1)}\mbox{,}$$
and
\begin{equation}\label{orden_amalgama}
r_{X,\varphi }=\mathrm{ord}_{\xi }(\mathcal{G}_{X_0,\varphi }^{(1)})=\mathrm{min}\left\{ \alpha _0,\mathrm{ord}_{\xi }(\mathcal{G}_{X_0,\varphi ^{(d)}}^{(1)})\right\} \mbox{.}
\end{equation}
\end{Par}
\vspace{0.5cm}

\subsubsection*{Auxiliary results}
The following Lemma shows that, in fact, $\alpha _0$ is not important for $r_{X,\varphi }$.

\begin{Lemma}\label{lemma:ord_amalgama_final}
Let $X$ be as in Section \ref{subsubsec:setting_hyp}. Let $\xi \in \mathrm{\underline{Max}\; mult }(X)$. Then for any arc $\varphi \in \mathcal{L}(X)$ through $\xi $ as in \ref{subsubsec:setting_hyp}:
$$\mathrm{ord}_{\xi }(\mathcal{G}_{X_0,\varphi }^{(1)})=\mathrm{ord}_{\xi }(\mathcal{G}_{X_0,\varphi ^{(d)}}^{(1)}) \mbox{.}$$
\end{Lemma}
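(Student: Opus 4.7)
By equation (\ref{orden_amalgama}) we have
$r_{X,\varphi}=\mathrm{ord}_{\xi}(\mathcal{G}_{X_0,\varphi}^{(1)})=\min\bigl\{\alpha_0,\ \mathrm{ord}_{\xi}(\mathcal{G}_{X_0,\varphi^{(d)}}^{(1)})\bigr\}$, so the entire content of the lemma is the inequality $\alpha_0\geq \mathrm{ord}_{\xi}(\mathcal{G}_{X_0,\varphi^{(d)}}^{(1)})$. The plan is to compute both quantities explicitly and then exploit the single fact that we have not yet used: namely, that $\varphi$ is an arc in $X$, so $\varphi(f)=0$ in $K[[t]]$.

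First I would identify $\mathcal{G}_{X_0,\varphi^{(d)}}^{(1)}$ concretely. By the remark after Definition \ref{def:image_G_by_arc}, $\mathcal{G}_{X_0,\varphi^{(d)}}^{(1)}=\tilde{\Gamma}_0^{(d)}\bigl(\mathcal{G}_{X_0,\varphi^{(d)}}^{(d+1)}\bigr)$. Since $\tilde{\Gamma}_0^{(d)}$ sends each $z_i-u_i t^{\alpha_i}$ to $0$, the part coming from $\mathcal{G}_{\varphi^{(d)}}^{(d+1)}$ is trivial on $C_0^{(d)}$, and the restriction reduces to the image of $\mathcal{G}_X^{(d)}=\mathrm{Diff}\bigl(S[B_{b-2}W^2,\ldots,B_0W^b]\bigr)$ (Lemma \ref{lema:generadores_algebra2}) under $\varphi$. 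By the argument used in the proof of Lemma \ref{lema:ord_algebra_eliminacion} (the generators $B_iW^{b-i}$ realise the minimum order, and taking derivatives in $t$ cannot lower it), the order of the resulting $K[[t]]$-Rees algebra at the closed point equals
$$\mathrm{ord}_{\xi}(\mathcal{G}_{X_0,\varphi^{(d)}}^{(1)})=\min_{0\leq i\leq b-2}\left\{\frac{\mathrm{ord}_t\bigl(\varphi(B_i)\bigr)}{b-i}\right\}.$$

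Next I would exploit $\varphi(f)=0$. Applying $\varphi$ to the Tschirnhausen expression (\ref{ec:f_Tsch2}) and using $\varphi(x)=u_0 t^{\alpha_0}$ with $u_0$ a unit gives
$$(u_0 t^{\alpha_0})^{b}+\sum_{i=0}^{b-2}\varphi(B_i)(u_0 t^{\alpha_0})^{i}=0 \quad \text{in } K[[t]].$$
The first summand has $t$-order exactly $b\alpha_0$. If every other summand had strictly larger $t$-order, the left-hand side would have $t$-order $b\alpha_0<\infty$, contradicting that it is zero. Hence there exists some $i\in\{0,\ldots,b-2\}$ with $b\alpha_0\geq \mathrm{ord}_t(\varphi(B_i))+i\alpha_0$, i.e.\ $\alpha_0\geq \mathrm{ord}_t(\varphi(B_i))/(b-i)$. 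Taking the minimum over $i$ and combining with the formula above yields $\alpha_0\geq \mathrm{ord}_{\xi}(\mathcal{G}_{X_0,\varphi^{(d)}}^{(1)})$, which is exactly what was needed.

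The only subtle step is the second one: justifying that the differential closure operation implicit in $\mathcal{G}_X^{(d)}$ does not produce, after pushing forward by $\varphi$, elements of strictly lower order than the $\varphi(B_i)W^{b-i}$. This is the main obstacle; I would handle it either by noting that Lemma \ref{lema:ord_algebra_eliminacion} already governs the order on the $V^{(d)}$ side and invoking that $\mathrm{ord}_t(\varphi(g))\geq \mathrm{ord}(\varphi)\cdot \mathrm{ord}_\xi(g)$ to transfer the inequality to the arc, or by observing that $\mathcal{G}_X^{(d)}$ and its image are integrally equivalent on $C_0^{(d)}$ to the algebra generated by the $B_iW^{b-i}$, which suffices for the order computation by \cite[Proposition 3.11]{Bl_E11}.
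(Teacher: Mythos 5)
Your central computation is exactly the one in the paper's proof: apply $\varphi$ to the Tschirnhausen form, observe that $\varphi(x^b)=u_0^bt^{b\alpha_0}$ has $t$-order exactly $b\alpha_0$, and conclude from $\varphi(f)=0$ that some coefficient must satisfy $\mathrm{ord}_t(\varphi(B_i))+i\alpha_0\leq b\alpha_0$, so that $\alpha_0\geq\min_i\left\{ \mathrm{ord}_t(\varphi^{(d)}(B_i))/(b-i)\right\} $; the paper runs this as a contradiction argument (its inequality (\ref{paso_intermedio:ord_amalgama_final})), and yours is the direct version of the same step. The flaw is in how you connect this minimum to $\mathrm{ord}_{\xi}(\mathcal{G}_{X_0,\varphi^{(d)}}^{(1)})$. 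The equality you assert, $\mathrm{ord}_{\xi}(\mathcal{G}_{X_0,\varphi^{(d)}}^{(1)})=\min_i\left\{ \mathrm{ord}_t(\varphi(B_i))/(b-i)\right\} $, is false in general: the order comparison of Lemma \ref{lema:ord_algebra_eliminacion} does not survive composition with a non-generic arc. Take $f=x^2-z_1z_2$ and $\varphi=(t^2,t^3,t)$; then $\varphi(f)=0$ and your formula gives $\mathrm{ord}_t(\varphi(B_0))/2=2$, but $z_2W\in\mathcal{G}_X^{(d)}=\mathrm{Diff}(S[z_1z_2W^2])$ restricts to $tW$, so the true order is $1$. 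The same example defeats both of your proposed repairs: the bound $\mathrm{ord}_t(\varphi(g))\geq\mathrm{ord}(\varphi)\cdot\mathrm{ord}_{\xi}(g)$ only yields the lower bound $\mathrm{ord}(\varphi)\cdot\mathrm{ord}_{\xi}(\mathcal{G}_X^{(d)})$ (here $1$, not $2$), which differs from your minimum precisely when the arc weights the variables unevenly; and the integral-equivalence claim fails because the order is invariant under integral closure while the two orders disagree.

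Fortunately the error is harmless for the lemma, because your argument only uses the direction $\mathrm{ord}_{\xi}(\mathcal{G}_{X_0,\varphi^{(d)}}^{(1)})\leq\min_i\left\{ \mathrm{ord}_t(\varphi^{(d)}(B_i))/(b-i)\right\} $, and that direction is trivial: each $B_iW^{b-i}$ lies in $\mathcal{G}_{\tilde{X}_0}^{(d+1)}$ and hence restricts to $\varphi^{(d)}(B_i)W^{b-i}\in\mathcal{G}_{X_0,\varphi^{(d)}}^{(1)}$, and a larger algebra has smaller or equal order. This is exactly how the paper proceeds: it introduces the subalgebra $\mathcal{H}=\mathcal{O}_{\tilde{V}_0^{(d+1)},\xi_0^{(d+1)}}[B_iW^{b-i}:i=0,\ldots,b-2]\subset\mathcal{G}_{\tilde{X}_0}^{(d+1)}$, records $\mathrm{ord}_{\xi}(\varphi^{(d)}(\mathcal{H}))\geq\mathrm{ord}_{\xi}(\mathcal{G}_{X_0,\varphi^{(d)}}^{(1)})$, and proves only $\alpha_0\geq\mathrm{ord}_{\xi}(\varphi^{(d)}(\mathcal{H}))$, never the exact value of $\mathrm{ord}_{\xi}(\mathcal{G}_{X_0,\varphi^{(d)}}^{(1)})$. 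So once you replace the false equality (and the paragraph defending it) by this one-line containment, your proposal collapses to a correct proof that is identical in substance to the paper's.
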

\begin{proof}
Assume that $X$ is given by $f$ as in (\ref{ec:f_Tsch2}). Let us suppose that $\varphi $ is given by $(\varphi _{x},\varphi _{z_1},\ldots ,\varphi _{z_d})=(u_0t^{\alpha _0},u_1t^{\alpha _1},\ldots ,u_dt^{\alpha _d})$, with $u_0,\ldots ,u_d$ units in $K[[t]]$ and $\alpha _0,\ldots ,\alpha _d$ positive integers, and recall that, since $\varphi \in \mathcal{L}(X)$,
\begin{equation}\label{f_se_anula:ord_amalgama_final}
\varphi (f)=\varphi \left( x^b+\sum _{i=0}^{b-2}{B_{i}x^{i}}\right)=0\mbox{.}
\end{equation}
By (\ref{orden_amalgama}), it suffices to prove that 
\begin{equation}\label{desig1:proof_lemma:ord_amalgma_final}
\alpha _0\geq \mathrm{ord}_{\xi }(\mathcal{G}_{X_0,\varphi ^{(d)}}^{(1)})\mbox{.}
\end{equation} 
On the other hand, from Lemma \ref{lema:generadores_algebra2} and diagram (\ref{diag:comm_compl-elim}) we know that 
$$\mathcal{G}_{\tilde{X}_0}^{(d+1)}=\mathrm{Diff}(\mathcal{O}_{\tilde{V}_0^{(d+1)},\xi _0^{(d+1)}}[B_iW^{b-i}: i=0,\ldots b-2])\mbox{.}$$
Denote 
$$\mathcal{H}=\mathcal{O}_{\tilde{V}_0^{(d+1)},\xi _0^{(d+1)}}[B_iW^{b-i}: i=0,\ldots b-2]\subset \mathcal{G}_{\tilde{X}_0}^{(d+1)}\mbox{.}$$
The inclusion holds after restricting both algebras to $C_0^{(d)}$, and hence
$$\mathrm{ord}_{\xi }(\varphi ^{(d)}(\mathcal{H}))\geq \mathrm{ord}_{\xi }(\varphi ^{(d)}(\mathcal{G}_{\tilde{X}_0}^{(d+1)}))=\mathrm{ord}_{\xi }(\mathcal{G}_{X_0,\varphi ^{(d)}}^{(1)})\mbox{.}$$
We will show now that 
\begin{equation}\label{paso_intermedio:ord_amalgama_final}
\alpha _0\geq \mathrm{ord}_{\xi }(\varphi ^{(d)}(\mathcal{H}))\mbox{,}
\end{equation}
which implies (\ref{desig1:proof_lemma:ord_amalgma_final}). On the contrary, let us suppose that
$$\alpha _0< \mathrm{ord}_{\xi }(\varphi ^{(d)}(\mathcal{H}))=\min _{i=0,\ldots ,b-2} \left\{ \frac{\mathrm{ord}_{t}\varphi ^{(d)}(B_i))}{b-i}\right\} \mbox{.}$$ 
That is,
\begin{equation*}
\alpha _0< \left( \frac{\mathrm{ord}_{t}(\varphi ^{(d)}(B_i))}{b-i}\right) \mbox{,\; for \; }i=0,\ldots b-2\mbox{,}
\end{equation*}
or equivalently
\begin{equation}\label{desig:individual:ord_amalgama_final}
(b-i)\alpha _0 <\mathrm{ord}_{t}(\varphi ^{(d)}(B_i)))\mbox{,\; for \; }i=0,\ldots b-2\mbox{.}
\end{equation}
Now observe that this implies
\begin{equation*}
 \varphi (f-x^b)=\mathrm{ord}_{t}( \sum _{i=0}^{b-2}\varphi ^{(d)}(B_i)) u_0^{i}t^{i\alpha _0}) \geq \min _{i=0,\ldots ,b-2} \left\{ \mathrm{ord}_{t}(\varphi ^{(d)}(B_i)))+i\cdot \alpha _0\right\} > b\cdot \alpha _0\mbox{.}
\end{equation*}
But this contradicts (\ref{f_se_anula:ord_amalgama_final}), so necessarily (\ref{paso_intermedio:ord_amalgama_final}) holds, concluding the proof of the Lemma. 
\end{proof}
\vspace{0.5cm}

We know now that we can just focus on the projection of $X$ over $S$, for the computation of the order of contact. We need to know now how the induced projection of arcs (\ref{diag:proj_arc}) behaves.

\begin{Def}
We say that an arc $\varphi \in \mathcal{L}(V^{(d)})$ through $\xi ^{(d)}\in V^{(d)}$ is a \textit{diagonal arc} if there exists a regular system of parameters $\left\{ z_1,\ldots ,z_d\right\} \in \mathcal{O}_{V^{(d)},\xi ^{(d)}}$, units $u_1,\ldots ,u_d\in K[[t]]$ and $\alpha \in \mathbb{N}$ such that $\varphi (z_i)=u_it^{\alpha }$ for $i=1,\ldots ,d$.
\end{Def}

\begin{Rem}\label{rem:eq_def_diagonal_arc} The following definition is equivalent to the previous one:\\
We say that an arc $\varphi \in \mathcal{L}(V^{(d)})$ through $\xi ^{(d)}\in V^{(d)}$ is a diagonal arc if there exists a regular system of parameters $\left\{ z_1,\ldots ,z_d\right\} \in \mathcal{O}_{V^{(d)},\xi ^{(d)}}$ inducing a diagram

\begin{equation}
\xymatrix{
0 \ar[r] & \mathrm{Ker}(\Gamma _0) \ar[r] & \mathcal{O}_{V_0^{(d+1)},\xi _0^{(d+1)}} \ar[rd]^{\Gamma _0} & \\
0 \ar[r] & \mathrm{Ker}(\varphi ) \ar[r] & \mathcal{O}_{V^{(d)},\xi ^{(d)}} \ar[r]^{\varphi } \ar[u]^{p^*} \ar[d]^{\delta } & K[[t]] \\
0 \ar[r] & \mathrm{Ker}(\tilde{\Gamma }_0) \ar[r] & (\mathcal{O}_{V^{(n)},\xi }\otimes _{k} K[[t]])_{\xi _0} \ar[ru]_{\tilde{\Gamma }_0} &
}
\end{equation}

where the ideal $\mathrm{Ker}(\tilde{\Gamma }_0)\subset (\mathcal{O}_{V^{(n)},\xi }\otimes _{k} K[[t]])_{\xi _0}$ is generated by elements of the form $(u_jz_i-u_iz_j)$, where $u_l\in K[[t]]$ are units for $l=1,\ldots ,d$.\footnote{Note that $\mathrm{Ker}(\tilde{\Gamma }_0)=\mathrm{Ker}(\Gamma _0)(\mathcal{O}_{V^{(n)},\xi }\otimes _{k} K[[t]])_{\xi _0}$.}
\end{Rem}

\begin{Rem}\label{rem:comp_diag_arcs}
Let $\varphi $ and $\varphi '$ be two arcs in $\mathcal{L}(V^{(d)})$ through $\xi \in V^{(d)}$ whose respective graphs are $\Gamma _0$ and $\Gamma _0'$. If $\varphi $ is diagonal and $\mathrm{Ker}(\Gamma _0)=\mathrm{Ker}(\Gamma _0')$, then $\varphi '$ is also diagonal. Moreover, since $\varphi $ is given by $\varphi (z_i)=u_it^{\alpha }$ for some regular system of parameters $\{ z_1,\ldots ,z_d\} $, where $u_1, \ldots ,u_d$ are units in $K[[t]]$ and $\alpha $ is come positive integer, then $\varphi '$ is given as $\varphi '(z_i)=u_ig'(t)$ for some $g'(t)\in K[[t]]$.
\end{Rem}

\begin{Lemma}\label{lema:arcos_diagonales} Let $X$ be as in \ref{subsubsec:setting_hyp} and let $\varphi ^{(d)}$ be an arc in $V^{(d)}$ through $\xi ^{(d)}\in V^{(d)}$. Then
	\begin{equation}\label{ec:lema_arcos_diagonales}\mathrm{ord}_{\xi }(\mathcal{G}_{X_0,\varphi ^{(d)}}^{(1)})\geq \mathrm{ord}_{\xi }(\mathcal{G}_X^{(d)})\cdot \mathrm{ord}(\varphi ^{(d)}) \mbox{.}\end{equation}
	\end{Lemma}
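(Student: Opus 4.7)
The proof will proceed in three steps: identifying the algebra $\mathcal{G}_{X_0,\varphi^{(d)}}^{(1)}$ via the arc homomorphism $\tilde{\Gamma}_0^{(d)}$, bounding the order of the image of an individual element of $\mathcal{G}_X^{(d)}$, and finally passing to the minimum over a finite generating set. Throughout, I will use the theorem following Definition \ref{def:order}, which says that the order of a Rees algebra at a point equals the minimum of the orders of any finite generating set.

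First, by (\ref{eq:amalgama_sinx}), $\mathcal{G}^{(d+1)}_{X_0,\varphi^{(d)}}=\mathcal{G}_X^{(d)}\odot \mathcal{G}_{\varphi^{(d)}}^{(d+1)}$. The remark immediately preceding Definition \ref{def:r} identifies the restriction to the curve $C_0^{(d)}$ with the image under $\tilde{\Gamma}_0^{(d)}$. Since each generator of $\mathcal{G}_{\varphi^{(d)}}^{(d+1)}$ has the form $(z_i-\varphi^{(d)}_{z_i})W$ and thus vanishes under $\tilde{\Gamma}_0^{(d)}$, choosing generators $g_1 W^{m_1},\ldots,g_s W^{m_s}$ of $\mathcal{G}_X^{(d)}$ yields
$$\mathcal{G}_{X_0,\varphi^{(d)}}^{(1)} \;=\; K[[t]]\bigl[\varphi^{(d)}(g_1) W^{m_1},\ldots,\varphi^{(d)}(g_s) W^{m_s}\bigr].$$

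Second, the key pointwise inequality is $\mathrm{ord}_t \varphi^{(d)}(f) \geq \nu_\xi(f)\cdot \mathrm{ord}(\varphi^{(d)})$, valid for every $f\in \mathcal{O}_{V^{(d)},\xi^{(d)}}$. Indeed, set $m=\nu_\xi(f)$, so that $f\in \mathcal{M}_{\xi^{(d)}}^{m}$; by Definition \ref{def:order_arc}, $\varphi^{(d)}(\mathcal{M}_{\xi^{(d)}})\subset (t^{\mathrm{ord}(\varphi^{(d)})})$, and multiplicativity gives $\varphi^{(d)}(f)\in (t^{m\cdot \mathrm{ord}(\varphi^{(d)})})$. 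Dividing by the weight $n$,
$$\mathrm{ord}_\xi\!\bigl(\varphi^{(d)}(f) W^n\bigr) \;=\; \frac{\mathrm{ord}_t \varphi^{(d)}(f)}{n} \;\geq\; \frac{\nu_\xi(f)}{n}\cdot \mathrm{ord}(\varphi^{(d)}) \;=\; \mathrm{ord}_\xi(f W^n)\cdot \mathrm{ord}(\varphi^{(d)}).$$

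Finally, applying the generator-wise characterization of the order both to $\mathcal{G}_X^{(d)}$ and to $\mathcal{G}_{X_0,\varphi^{(d)}}^{(1)}$, and taking the minimum over $\ell=1,\ldots,s$ of the previous display, one obtains
$$\mathrm{ord}_\xi\bigl(\mathcal{G}_{X_0,\varphi^{(d)}}^{(1)}\bigr) \;=\; \min_{\ell}\mathrm{ord}_\xi\!\bigl(\varphi^{(d)}(g_\ell) W^{m_\ell}\bigr) \;\geq\; \Bigl(\min_{\ell}\mathrm{ord}_\xi(g_\ell W^{m_\ell})\Bigr)\cdot \mathrm{ord}(\varphi^{(d)}) \;=\; \mathrm{ord}_\xi(\mathcal{G}_X^{(d)})\cdot \mathrm{ord}(\varphi^{(d)}),$$
which is exactly (\ref{ec:lema_arcos_diagonales}). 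No real obstacle is expected; the only delicate point is the bookkeeping identification of the restriction $\mathcal{G}_{X_0,\varphi^{(d)}}^{(1)}$ with $\tilde{\Gamma}_0^{(d)}(\mathcal{G}_X^{(d)})$, which is covered by the remark following Definition \ref{def:image_G_by_arc}. Notice that diagonality of $\varphi^{(d)}$ is never used: the bound holds for every arc through $\xi^{(d)}$, consistently with the role of this lemma as a lower bound whose equality case (genericity of the arc with respect to the tangent cone) is treated separately.
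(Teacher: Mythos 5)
Your proof is correct and is essentially the paper's argument: the paper proves the same inequality by contradiction, extracting an element $qW^l\in\mathcal{G}_X^{(d)}$ of small order and contradicting the key bound $\mathrm{ord}_t(\varphi^{(d)}(q))\geq \mathrm{ord}(\varphi^{(d)})\cdot\mathrm{ord}_\xi(q)$, which is exactly your pointwise inequality run in contrapositive form. Your direct version via the generator-wise order formula, and your explicit identification of $\mathcal{G}_{X_0,\varphi^{(d)}}^{(1)}$ with $\tilde{\Gamma}_0^{(d)}(\mathcal{G}_X^{(d)})$, only make explicit steps the paper leaves implicit, and your closing remark that diagonality is unused matches the paper's own comment after the Lemma.
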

	
	\begin{proof}
Suppose, contrary to our claim, that $\mathrm{ord}_{\xi }(\mathcal{G}_{X_0,\varphi ^{(d)}}^{(1)})<\mathrm{ord}_{\xi }(\mathcal{G}_X^{(d)})\cdot \alpha $, where $\alpha =\mathrm{ord}(\varphi ^{(d)})$. Let $\varphi ^{(d)}$ be given by $\varphi ^{(d)}(z_i)=u_it^{\alpha _i}$ for some regular system of parameters $\{ z_1,\ldots ,z_d\} $ in $\mathcal{O}_{V^{(d)},\xi ^{(d)}}$, units $u_1,\ldots ,u_d\in K[[t]]$ and positive integers $\alpha _1,\ldots ,\alpha _d$. Then for some $qW^{l}\in \mathcal{G}_{X}^{(d)}$,
	\begin{equation}\label{ec:lema_diagonales1}\frac{\mathrm{ord}_{t}(\varphi ^{(d)}(q))}{l}<\mathrm{ord}_{\xi }(\mathcal{G}_X^{(d)})\cdot \alpha \mbox{.}\end{equation}
	But $\mathrm{ord}_{t}(\varphi ^{(d)}(q))\geq \alpha \cdot \mathrm{ord}_{\xi }(q)$, and hence
	$$\frac{\mathrm{ord}_{t}(\varphi ^{(d)}(q))}{l}\geq \frac{\alpha \cdot \mathrm{ord}_{\xi }(q)}{l}\geq \alpha \cdot \mathrm{ord}_{\xi }(\mathcal{G}_{X}^{(d)})\mbox{,}$$
	leading to a contradiction, and proving the Lemma.\\
	\end{proof}
	
	Note that in the Lemma $\varphi ^{(d)}$ is any arc in $\mathcal{L}_{\xi }(V^{(d)})$, not necessarily the projection of any arc $\varphi \in \mathcal{L}_{\xi }(X)$.\\

	\begin{Def}
	Let $\mathcal{G}^{(d)}$ be a Rees algebra over $V^{(d)}$. We say that an arc $\varphi ^{(d)}\in \mathcal{L}_{\xi }(V^{(d)})$ is \textit{generic for $\mathcal{G}^{(d)}$} if
	$$\mathrm{ord}_{\xi }(\left. (\mathcal{G}^{(d)}\odot \mathcal{G}_{\varphi ^{(d)}}^{(d+1)})\right| _{C_0^{(d)}})=\mathrm{ord}(\varphi ^{(d)})\cdot \mathrm{ord}_{\xi }(\mathcal{G}^{(d)})\mbox{.}$$
	If $\varphi ^{(d)}$ is also diagonal, we say that it is \textit{diagonal-generic}.
	\end{Def}
	\vspace{0.2cm}
	
	\begin{Rem}\label{obs:desig_lema_arcos_diagonales}
	In the situation of Lemma \ref{lema:arcos_diagonales}, an arc for which (\ref{ec:lema_arcos_diagonales}) is an equality is a generic arc for $\mathcal{G}^{(d)}_X$: $\mathcal{G}_X^{(d)}\odot \left. \mathcal{G}_{\varphi ^{(d)}}^{(d+1)}\right| _{C_0^{(d)}}=\left. \mathcal{G}_{X_0,\varphi ^{(d)}}^{(d+1)}\right| _{C_0^{(d)}}=\mathcal{G}_{X_0,\varphi ^{(d)}}^{(1)}$ shows it. Note that such an arc can always be found, by just considering a diagonal arc $\varphi ^{(d)}$ in $V^{(d)}$ through $\xi ^{(d)}\in V^{(d)}$ given, in some regular system of parameters $\{ z_1,\ldots ,z_d\} $, by $(u_1t^{\alpha },\ldots ,u_dt^{\alpha })$, for some positive integer $\alpha $ and units $u_1,\ldots ,u_d\in k$ such that there exists some element $qW^l\in \mathcal{G}_X^{(d)}$ with $\frac{\mathrm{ord}_{\xi }(q)}{l}=\mathrm{ord}_{\xi }(\mathcal{G}_X^{(d)})$, and for which\footnote{If $q\in R$ for a regular local ring $R$ with maximal ideal $\mathcal{M}$, then we denote by $\mathrm{in}_{\xi }(q)$ the \textit{initial part of $q$ at the closed point $\xi $}, meaning the equivalence class of $q$ in the quotient $\mathcal{M}^n/\mathcal{M}^{n+1}$, where $n$ is such that $q\in \mathcal{M}^n$ but $q\notin \mathcal{M}^{n+1}$. Therefore $\mathrm{in}_{\xi }(q)\in \mathrm{Gr}_{R_{\mathcal{M}}}\cong k'[z_1,\ldots ,z_d]$ is a homogeneous polynomial of degree $n$.} $(\mathrm{in}_{\xi }(q))(u_1,\ldots ,u_d)\neq 0$. For this arc, 
	$$\mathrm{ord}_{t}(\varphi ^{(d)}(q))=\alpha \cdot \mathrm{ord}_{\xi }(q)\mbox{,}$$
	and hence
	$$\mathrm{ord}_{\xi }(\mathcal{G}_{X_0,\varphi ^{(d)}}^{(1)})\leq \frac{\mathrm{ord}_{t}(\varphi ^{(d)}(q)))}{l}=\frac{\alpha \cdot \mathrm{ord}_{\xi }(q)}{l}=\alpha \cdot \mathrm{ord}_{\xi }(\mathcal{G}_X^{(d)})\mbox{,}$$
	but Lemma \ref{lema:arcos_diagonales} forces the last inequality to be an equality.
	\end{Rem}
	\vspace{0.2cm}

Even though in this section we are always under the assumption of $X$ being locally a hypersurface, the following Lemma will be stated and proved for a variety of arbitrary codimension, since no extra work is needed and this generality will be necessary in the next section.

\begin{Lemma}\label{lemma:lifting_arcs}
Let $X$ be a variety of dimension $d$ over $k$. With the notation from \ref{subsubsec:setting_hyp}, let $\bar{\varphi }^{(d)}$ be a diagonal arc in $V^{(d)}$ through $\xi ^{(d)}\in V^{(d)}$ which is diagonal-generic for $\mathcal{G}_X^{(d)}$. Then it is possible to find an arc $\varphi \in \mathcal{L}(X)$ through $\xi $ whose projection $\varphi ^{(d)}$ onto $V^{(d)}$ via $\beta _X$ is a diagonal arc which is also diagonal-generic for $\mathcal{G}_X^{(d)}$.
\end{Lemma}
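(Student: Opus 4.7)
The plan is to exploit the finiteness of $\beta_X : X \to V^{(d)}$ from Example \ref{ex3} to lift $\bar{\varphi}^{(d)}$ to an arc on $X$, paying the price of a reparametrization of the uniformizer that preserves both the diagonal shape and genericity. Concretely, with $B = S[\theta_1,\ldots,\theta_{n-d}]$ as in Example \ref{ex3}, so $X = \mathrm{Spec}(B)$ locally at $\xi$ and $S \hookrightarrow B$ is finite, I will first form the base change $\tilde{B} = B \otimes_S K[[t]]$ along $\bar{\varphi}^{(d)} : S \to K[[t]]$. Since $B$ is finite over $S$ and $K[[t]]$ is a complete DVR, $\tilde{B}$ is a finite $K[[t]]$-algebra, hence decomposes as a finite product of complete local rings whose maximal ideals correspond to the closed points of the special fiber $B \otimes_S K$, which contains $\xi$. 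I pick the maximal ideal $\mathfrak{m}$ corresponding to $\xi$ and localize to get $\tilde{B}_{\mathfrak{m}}$.

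Next I will produce the lifted arc. The normalization of $\tilde{B}_{\mathfrak{m}}$ (after killing any $t$-torsion) is a finite product of complete DVRs $K_i[[t_i]]$, each finite over $K[[t]]$ via $t \mapsto u_i t_i^{e_i}$ for a unit $u_i$ and a positive integer $e_i$. Fixing any such component $K'[[t']]$, the composition $\varphi: B \to \tilde{B} \to \tilde{B}_{\mathfrak{m}} \to K'[[t']]$ sends $\mathcal{M}_\xi$ into $(t')$, so $\varphi$ is an arc in $X$ through $\xi$. Its projection $\varphi^{(d)} = \varphi \circ \beta_X^*$ is $\bar{\varphi}^{(d)}$ post-composed with the map $K[[t]] \to K'[[t']]$, $t \mapsto u'(t')^{e'}$. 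If $\bar{\varphi}^{(d)}(z_j) = v_j t^{\alpha}$ with $v_j \in K[[t]]^{\times}$, then $\varphi^{(d)}(z_j)$ is a unit of $K'[[t']]$ times $(t')^{\alpha e'}$, so $\varphi^{(d)}$ is diagonal of order $\alpha e'$.

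Finally, I will check that diagonal-genericity survives this reparametrization. Since the reparametrization multiplies every $t$-adic order by $e'$, and since $\mathrm{ord}_{\xi}(\mathcal{G}_X^{(d)})$ is independent of the arc, the genericity equality for $\bar{\varphi}^{(d)}$ transports to
$$\mathrm{ord}_{\xi}\bigl(\mathcal{G}_{X_0,\varphi^{(d)}}^{(1)}\bigr) = e' \cdot \mathrm{ord}(\bar{\varphi}^{(d)}) \cdot \mathrm{ord}_{\xi}(\mathcal{G}_X^{(d)}) = \mathrm{ord}(\varphi^{(d)}) \cdot \mathrm{ord}_{\xi}(\mathcal{G}_X^{(d)}),$$
as required. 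The main obstacle I expect is the normalization step: I must know that $\tilde{B}_{\mathfrak{m}}$ has a non-trivial generic fiber over $K[[t]]$, so that at least one component of its normalization is genuinely a DVR finite over $K[[t]]$ rather than a torsion quotient supported in the special fiber. This will follow from the fact that $\bar{\varphi}^{(d)}$ is not contained in any proper closed subvariety of $V^{(d)}$ cutting out a branch of $X$ over $\xi$, which is implicit in the diagonal-generic hypothesis together with the explicit form of $B$ from Example \ref{ex3}.
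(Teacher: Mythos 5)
Your overall construction is sound and takes a genuinely different route from the paper. The paper works with prime ideals: it sets $\mathcal{P}=\mathrm{Ker}(\bar{\varphi }^{(d)})$, invokes lying over for the finite injective morphism $\beta _X^*$ to produce a prime $\mathcal{Q}\subset \mathcal{O}_{X,\xi }$ with $\mathcal{Q}\cap \mathcal{O}_{V^{(d)},\xi ^{(d)}}=\mathcal{P}$, uses finiteness of $\mathcal{O}_{V^{(d)},\xi ^{(d)}}/\mathcal{P}\rightarrow \mathcal{O}_{X,\xi }/\mathcal{Q}$ to conclude that $\mathcal{Q}$ cuts out a curve $C$ in $X$ through $\xi $, takes an arc on $C$, and then deduces diagonality of the projection from equality of kernels (Remark \ref{rem:comp_diag_arcs}) and genericity by a completion argument. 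Your base-change-and-normalize argument replaces all of this by the explicit reparametrization $t\mapsto u'(t')^{e'}$, which makes both the diagonal shape and the transport of genericity completely transparent: every $t$-adic order, including $\mathrm{ord}(\varphi ^{(d)})=e'\alpha $, is multiplied by $e'$, so the defining equality of diagonal-genericity is preserved. That part of your proof is correct, and arguably cleaner than the paper's ``passing to the completion\ldots it can be checked'' step.

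The gap is exactly at the step you flag, and the repair you propose does not work: the existence of a minimal prime of $\tilde{B}_{\mathfrak{m}}$ avoiding $t$ has nothing to do with diagonal-genericity. Diagonal-genericity is an order condition at $\xi $ on finitely many elements of $\mathcal{G}_X^{(d)}$ along the arc; it does not prevent $\bar{\varphi }^{(d)}$ from lying inside the discriminant of $\beta _X$ or any other closed set, and ``not contained in any proper closed subvariety cutting out a branch of $X$ over $\xi $'' is not a consequence of it (the closure of the image of any arc is itself a proper closed curve). What actually rescues the step are two facts available in the setting of \ref{subsubsec:setting_hyp} and Example \ref{ex3}. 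First, since the $f_i$ are in Tschirnhausen form, all coefficients $B_{\{i\},j}$ have positive order at $\xi ^{(d)}$, so $B/\mathcal{M}_{\xi ^{(d)}}B$ is a nonzero quotient of $\bigotimes _i k(\xi ^{(d)})[x_i]/(x_i^{b_i})$: the fiber of $\beta _X$ over $\xi ^{(d)}$ is a single fat point with residue field $k(\xi ^{(d)})$. Hence the special fiber of $\tilde{B}=B\otimes _S K[[t]]$ is local even after tensoring with $K$, and since every maximal ideal of the finite $K[[t]]$-algebra $\tilde{B}$ contracts to $(t)$, the ring $\tilde{B}$ is itself local with unique maximal ideal $\mathfrak{m}$ (in particular there is no choice of component to make, nor any risk of the branch passing through a different point of the fiber). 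Second, since $S\hookrightarrow B$ is injective and finite, lying over gives a prime $\mathcal{Q}$ of $B$ with $\mathcal{Q}\cap S=\mathcal{P}$; as $\bar{\varphi }^{(d)}$ factors through $S/\mathcal{P}\hookrightarrow K[[t]]$, one gets a surjection from $B\otimes _S K((t))=(B/\mathcal{P}B)\otimes _{S/\mathcal{P}}K((t))$ onto $(B/\mathcal{Q})\otimes _{S/\mathcal{P}}K((t))\neq 0$, so the generic fiber of $\tilde{B}$ is nonzero. Together these show $t$ is not nilpotent in $\tilde{B}=\tilde{B}_{\mathfrak{m}}$, so a dominating minimal prime exists and your normalization step goes through. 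Note that the second fact is precisely the lying-over input on which the paper's proof runs; the first, the uniqueness of the point of $X$ above $\xi ^{(d)}$, is what spares you the paper's dimension count, and it — not genericity — is the missing ingredient in your argument.
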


\begin{proof} Consider a local presentation as in Example \ref{ex3} for $X$ at $\xi $ attached to the multiplicity. Let us recall that not every arc in $\left\{ f_1=\ldots =f_{n-d}=0\right\} $ is an arc in $X$, since
$$(f_1, \ldots ,f_{n-d})\subset I(X) \Longrightarrow X\subset \left\{ f_1=\ldots =f_{n-d}=0\right\} \mbox{.}$$
Assume that $\bar{\varphi }^{(d)}(z_i)=u_it^{\alpha }$, $i=1,\ldots ,d$ for some units $u_1,\ldots ,u_d\in K[[t]]$. We need to choose an arc $\varphi $ such that $\varphi \in \mathcal{L}(V(f))$ for all $f\in I(X)$, or equivalently an arc such that $\mathrm{Ker}(\varphi )\supset I(X)$. Consider the following diagram
\footnotesize
$$
\xymatrix{ 
\mathcal{O}_{X,\xi }\cong \mathcal{O}_{V^{(d)},\xi ^{(d)}}[x_1,\ldots  ,x_{n-d}]/I(X) & & \mathcal{O}_{V^{(d)},\xi ^{(d)}}[x_1,\ldots ,x_{n-d}] \ar[ll] \\
 & & \\
 & \mathcal{O}_{V^{(d)},\xi ^{(d)}} \ar[uul]_{\beta ^*_X} \ar[uur]_{\beta ^*}  & 
}
$$
\normalsize
where $\beta ^*_X$ (induced by $\beta _X$ from (\ref{eq:beta_X})) is a finite morphism. Let $\mathcal{P}=\mathrm{Ker}(\overline{\varphi }^{(d)})\subset \mathcal{O}_{V^{(d)},\xi ^{(d)}}$. There is a prime ideal $\mathcal{Q}\subset \mathcal{O}_{X,\xi }$ such that $\mathcal{Q}\cap \mathcal{O}_{V^{(d)},\xi ^{(d)}}=\mathcal{P}$. Note that $\mathcal{Q}$ is lifted to a unique ideal $\mathcal{Q}'\subset \mathcal{O}_{V^{(d)},\xi ^{(d)}}[x_1,\ldots ,x_{n-d}]$, with the property that $I(X)\subset \mathcal{Q}'$. We have the following diagram
$$
\xymatrix{ 
\mathcal{Q} \subset \mathcal{O}_{X,\xi } \ar[r] & \mathcal{O}_{X,\xi }/\mathcal{Q} \\
\mathcal{P} \subset \mathcal{O}_{V^{(d)},\xi ^{(d)}} \ar[r] \ar@<-2ex>[u]^{\beta _X^*} & \mathcal{O}_{V^{(d)},\xi ^{(d)}}/\mathcal{P} \ar[u] 
}
$$
where the left vertical arrow is a finite morphism, forcing the right vertical one to be also finite. Then, the two rings in the right side of the diagram have the same dimension, and thus $\mathcal{Q}$ defines a closed set of dimension $1$ in $X$, $C$. There is an arc $\varphi $ (different from the morphism $0$) in $C$ through $\xi $, and we know that, locally in a neighbourhood of $\xi $, $\mathcal{Q}=\mathrm{Ker}(\varphi )$ and that $\mathrm{Ker}(\varphi )\cap \mathcal{O}_{V^{(d)},\xi ^{(d)}}= \mathrm{Ker}(\varphi ^{(d)})=\mathrm{Ker}(\bar{\varphi }^{(d)})$, so the projection of $\varphi $ onto $V^{(d)}$, $\varphi ^{(d)}$, is diagonal by Remark \ref{rem:comp_diag_arcs}. To see that it is generic for $\mathcal{G}_X^{(d)}$, note that there exists some element $qW^l\in \mathcal{G}_X^{(d)}$ with $\frac{\mathrm{ord}_{\xi }(q)}{l}=\mathrm{ord}_{\xi }(\mathcal{G}_X^{(d)})$ for which $(\mathrm{in}_{\xi }(q))(u_1,\ldots ,u_d)\neq 0$. By passing to the completion of $(\mathcal{O}_{V^{(n)},\xi }\otimes _{k} K[[t]])_{\xi _0}$ at its maximal ideal (see Remark \ref{rem:eq_def_diagonal_arc}) and using Remark \ref{rem:comp_diag_arcs}, it can be checked that this implies that $\varphi ^{(d)}$ is also generic for $\mathcal{G}_X^{(d)}$.

\end{proof}

\begin{Rem}\label{rem:lifting_arcs}
The arc obtained in Lemma \ref{lemma:lifting_arcs} is given (as in (\ref{diag:triangle})) by
\begin{equation}\label{eq:arco_generico_diagonal}
\varphi =(g_1(t),\ldots ,g_{n-d}(t),u_1g'(t),\ldots ,u_dg'(t))
\end{equation}
for some $g_1(t),\ldots ,g_{n-d}(t),g'(t)\in K[[t]]$ and $u_1,\ldots ,u_d\in K[[t]]$, because $\mathrm{Ker}(\varphi )\cap \mathcal{O}_{V^{(d)},\xi ^{(d)}}=\mathrm{Ker}(\bar{\varphi }^{(d)})=\mathrm{Ker}(\varphi ^{(d)})$ and $\varphi ^{(d)}$ is diagonal (see Remark \ref{rem:comp_diag_arcs}).\\

\end{Rem}
\vspace{0.2cm}

\subsubsection*{Results for hypersurfaces}
Now we return to the hypersurface case, and we have enough tools to prove the following theorem:

\begin{Thm} \label{thm:desigualdad}
	Let $X$ be a variety of dimension $d$ which is locally a hypersurface at $\xi \in \mathrm{\underline{Max}\, mult}(X)$. For any $\varphi \in \mathcal{L}(X)$ through $\xi $, with the notation from section \ref{subsubsec:setting_hyp},
	\begin{equation}\label{ec:desig_hypers}\overline{r}_{X,\varphi }\geq \mathrm{ord}_{\xi }(\mathcal{G}_{X}^{(d)})\mbox{.}\end{equation}
\end{Thm}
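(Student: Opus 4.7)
The plan is to chain together the three preparatory results Lemma \ref{lemma:ord_amalgama_final}, Lemma \ref{lema:arcos_diagonales} and the elementary comparison between $\mathrm{ord}(\varphi)$ and $\mathrm{ord}(\varphi^{(d)})$. Since $X$ is locally a hypersurface at $\xi$, we may write it as in \ref{subsubsec:setting_hyp} with $\mathcal{O}_{X,\xi}\cong S[x]/(f)$, $f$ in Tschirnhausen form, and $\varphi$ given by $\varphi(\bar x)=u_0t^{\alpha_0}$, $\varphi(z_i)=u_it^{\alpha_i}$ in some regular system of parameters $\{x,z_1,\dots,z_d\}$. The projected arc $\varphi^{(d)}\in\mathcal{L}(V^{(d)})$ from diagram (\ref{diag:proj_arc}) is determined by $\varphi^{(d)}(z_i)=u_it^{\alpha_i}$.

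First, I would invoke Lemma \ref{lemma:ord_amalgama_final} to obtain
\[
r_{X,\varphi}\;=\;\mathrm{ord}_{\xi}(\mathcal{G}_{X_0,\varphi}^{(1)})\;=\;\mathrm{ord}_{\xi}(\mathcal{G}_{X_0,\varphi^{(d)}}^{(1)}),
\]
which eliminates the $x$-contribution $\alpha_0$ from the computation of $r_{X,\varphi}$; informally, the constraint $\varphi(f)=0$ forces $\alpha_0$ to be at least as large as the contact order coming from the coefficients $B_i\in S$. Next, I would apply Lemma \ref{lema:arcos_diagonales} to the arc $\varphi^{(d)}$ (which is just an arc in $V^{(d)}$, diagonal or not) to get
\[
\mathrm{ord}_{\xi}(\mathcal{G}_{X_0,\varphi^{(d)}}^{(1)})\;\geq\;\mathrm{ord}_{\xi}(\mathcal{G}_X^{(d)})\cdot\mathrm{ord}(\varphi^{(d)}).
\]

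The only remaining ingredient is the inequality $\mathrm{ord}(\varphi)\leq\mathrm{ord}(\varphi^{(d)})$. This is immediate from Definition \ref{def:order_arc}: since the maximal ideal $\mathcal{M}_{\xi^{(d)}}=\langle z_1,\dots,z_d\rangle\subset S$ is contained (via $\beta^*$) in the maximal ideal $\mathcal{M}_\xi\subset\mathcal{O}_{X,\xi}$, the smallest $n$ with $\varphi(\mathcal{M}_\xi)\subset(t^n)$ is bounded above by the smallest $n'$ with $\varphi^{(d)}(\mathcal{M}_{\xi^{(d)}})\subset(t^{n'})$; explicitly $\mathrm{ord}(\varphi)=\min\{\alpha_0,\alpha_1,\dots,\alpha_d\}\leq\min\{\alpha_1,\dots,\alpha_d\}=\mathrm{ord}(\varphi^{(d)})$.

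Concatenating these three steps,
\[
r_{X,\varphi}\;\geq\;\mathrm{ord}_{\xi}(\mathcal{G}_X^{(d)})\cdot\mathrm{ord}(\varphi^{(d)})\;\geq\;\mathrm{ord}_{\xi}(\mathcal{G}_X^{(d)})\cdot\mathrm{ord}(\varphi),
\]
and dividing by $\mathrm{ord}(\varphi)>0$ yields $\bar r_{X,\varphi}\geq\mathrm{ord}_{\xi}(\mathcal{G}_X^{(d)})$, as required. There is no serious obstacle here: all three of the key steps have already been established, and the only thing that must be checked by hand is the comparison of arc orders under the smooth projection $\beta$, which is a straightforward bookkeeping of regular systems of parameters. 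The technical content of the theorem is concentrated in Lemma \ref{lemma:ord_amalgama_final}, whose proof uses the vanishing $\varphi(f)=0$ to rule out $\alpha_0$ being strictly smaller than the contact order of $\varphi^{(d)}$ with the elimination algebra.
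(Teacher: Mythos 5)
Your proof is correct, and it is leaner than the paper's own argument. Both rest on the same two preparatory results --- Lemma \ref{lemma:ord_amalgama_final} to discard the $x$-coordinate, giving $r_{X,\varphi }=\mathrm{ord}_{\xi }(\mathcal{G}_{X_0,\varphi ^{(d)}}^{(1)})$, and Lemma \ref{lema:arcos_diagonales} --- but you deploy the second one differently. The paper does \emph{not} apply Lemma \ref{lema:arcos_diagonales} to $\varphi ^{(d)}$ itself: it introduces an auxiliary diagonal arc $\tilde{\varphi }$ of order $\alpha =\mathrm{ord}(\varphi )=\min \{ \alpha _0,\ldots ,\alpha _d\} $, applies the Lemma to $\tilde{\varphi }^{(d)}$, and then proves the comparison (\ref{objetivo1}), $\mathrm{ord}_{\xi }(\mathcal{G}_{X_0,\tilde{\varphi }^{(d)}}^{(1)})\leq \mathrm{ord}_{\xi }(\mathcal{G}_{X_0,\varphi ^{(d)}}^{(1)})$, by choosing units $\tilde{u}_1,\ldots ,\tilde{u}_d\in k$ so that the initial forms of a finite set of generators $g_iW^{l_i}$ of $\mathcal{G}_X^{(d)}$ do not vanish at them (which uses that $k$ is infinite) and comparing $\mathrm{ord}_{t}(\varphi ^{(d)}(g_i))\geq \alpha \cdot \lambda _i=\mathrm{ord}_{t}(\tilde{\varphi }^{(d)}(g_i))$. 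You bypass this genericity step entirely: since the paper itself notes, immediately after Lemma \ref{lema:arcos_diagonales}, that the Lemma holds for \emph{any} arc in $\mathcal{L}_{\xi }(V^{(d)})$, your direct application to the projection $\varphi ^{(d)}$ is legitimate, and the remaining ingredient $\mathrm{ord}(\varphi )=\min \{ \alpha _0,\alpha _1,\ldots ,\alpha _d\} \leq \min \{ \alpha _1,\ldots ,\alpha _d\} =\mathrm{ord}(\varphi ^{(d)})$ is exactly the bookkeeping you describe --- an inequality the paper's comparison step also uses implicitly when it asserts $\varphi ^{(d)}(g_i)\in <t^{\alpha \cdot \lambda _i}>$. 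What your route buys is brevity and no appeal to genericity or to the infinitude of the base field within this theorem; what the paper's detour buys is that the diagonal-generic apparatus it sets up here is precisely what is needed just afterwards for the equality case (Remark \ref{obs:desig_lema_arcos_diagonales}, Lemma \ref{lemma:lifting_arcs} and Theorem \ref{thm:igualdad}), so the authors extract both halves of Theorem \ref{thm:main} from a single construction. Your reading of where the technical content sits is also accurate: the only place the relation $\varphi (f)=0$ is genuinely needed is Lemma \ref{lemma:ord_amalgama_final}, which rules out $\alpha _0$ being smaller than the contact order coming from the coefficients $B_i$.
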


\begin{proof}
	We can assume that $X$ is given locally by $f$ is as in (\ref{ec:f_Tsch2}). Let us write $\alpha =\mathrm{ord}(\varphi )=\mathrm{min}\left\{ \alpha _0 ,\ldots ,\alpha _d\right\} $. From Lemma \ref{lema:arcos_diagonales}, for any diagonal arc $\tilde{\varphi }$, given as $(\tilde{u}_0t^{\alpha },\ldots ,\tilde{u}_dt^{\alpha })$
	$$\alpha \cdot \mathrm{ord}_{\xi }(\mathcal{G}_X^{(d)})\leq \mathrm{ord}_{\xi }(\mathcal{G}_{X_0,\tilde{\varphi }^{(d)}}^{(1)})\mbox{.}$$
	It suffices to show that it is possible to choose units $\tilde{u}_i\in K[[t]]$ for $i=0,\ldots ,d$ so that
	\begin{equation}\label{objetivo1}
	\mathrm{ord}_{\xi }(\mathcal{G}_{X_0,\tilde{\varphi }^{(d)}}^{(1)}) \leq \mathrm{ord}_{\xi }(\mathcal{G}_{X_0,\varphi ^{(d)}}^{(1)})\mbox{.}
	\end{equation}
This, together with Lemma \ref{lemma:ord_amalgama_final}, would imply that
\begin{equation*}
\alpha \cdot \mathrm{ord_{\xi }}(\mathcal{G}_X^{(d)})\leq \mathrm{ord}_{\xi }(\mathcal{G}_{X_0,\varphi ^{(d)}}^{(1)})= \mathrm{ord}_{\xi }(\mathcal{G}_{X_0,\varphi }^{(1)})\mbox{,}
\end{equation*}
and complete the proof of the Theorem.\\
\\
In order to prove (\ref{objetivo1}), let us consider a finite set of generators  of $\mathcal{G}_X^{(d)}$, $\left\{ g_iW^{l_i}\right\} _{i=1,\ldots ,r}$. Since this set is finite and $k$ is infinite, it is possible to choose units $\tilde{u}_1,\ldots ,\tilde{u}_d \in k$ in a way such that
$$\mathrm{in}_{\xi }(g_i)(\tilde{u}_1,\ldots ,\tilde{u}_d) \neq 0\mbox{\; for \; }i=1,\ldots ,r\mbox{.}$$
Let $\lambda _i=\mathrm{ord}_{\xi }(g_i)$ for $i=1,\ldots ,r$. As $\mathrm{in}_{\xi }(g_i)$ is a homogeneous polynomial, 
$$\mathrm{in}_{\xi }(\tilde{\varphi }^{(d)}(g_i))=t^{\alpha \cdot \lambda _i}\cdot \mathrm{in}_{\xi }(g_i)(\tilde{u}_1,\ldots ,\tilde{u}_d)$$
and
$$\mathrm{ord}_{t}(\tilde{\varphi }^{(d)}(g_i))=\alpha \cdot \lambda _i\mbox{.}$$
On the other hand, observe that
$$\varphi ^{(d)}(g_i)\in <t^{\alpha \cdot \lambda _i}>\mbox{,}$$
so 
\begin{equation}\label{desig:proof_desigualdad_hyp1}\mathrm{ord}_{t}(\varphi ^{(d)}(g_i))\geq \alpha \cdot \lambda _i=\mathrm{ord}_{t}(\tilde{\varphi }^{(d)}(g_i))\mbox{.}\end{equation}
Since (\ref{desig:proof_desigualdad_hyp1}) holds for all $i\in \left\{ 1,\ldots ,r\right\} $, and for some $k\in \left\{ 1,\ldots ,r\right\} $, 
$$\frac{\mathrm{ord}_{t}(\varphi ^{(d)}(g_k))}{l_k}=\mathrm{ord}_{\xi }(\mathcal{G}_{X_0,\varphi ^{(d)}}^{(1)})\mbox{,}$$
it follows that
$$\mathrm{ord}_{\xi }(\mathcal{G}_{X_0,\varphi ^{(d)}}^{(1)})=\frac{\mathrm{ord}_{t}(\varphi ^{(d)}(g_k)))}{l_k}\geq \frac{\mathrm{ord}_{t}(\tilde{\varphi }^{(d)}(g_k))}{l_k}\geq \mathrm{ord}_{\xi }(\mathcal{G}_{X_0,\tilde{\varphi }^{(d)}}^{(1)})$$
concluding the proof of (\ref{objetivo1}), and the proof of the Theorem.\\
\end{proof}

For the proof of the existence of an arc giving an equality in (\ref{ec:desig_hypers}), we will use the following Lemma:

\begin{Lemma}\label{lemma:tau_mayor}
Let $X$ be as in Section \ref{subsubsec:setting_hyp}, and let $\varphi $ be an arc in $X$ through $\xi \in \mathrm{\underline{Max}\; mult}(X)$ with the notation used there where $\varphi (x)=g_1(t)$ and $\varphi (z_i)=u_ig'(t)$, $u_i$ a unit in $K[[t]]$, for $i=1,\ldots ,d$. Assume that $\varphi $ is such that the projection $\varphi ^{(d)}$ on $V^{(d)}$ is a diagonal-generic arc for $\mathcal{G}_X^{(d)}$.\footnote{We know that such an arc exists by Remark \ref{rem:lifting_arcs}.} If $\mathrm{ord}(\varphi)=\mathrm{ord}_{t}(g_1(t))$, then
$$\overline{r}_{X,\varphi }=\mathrm{ord}_{\xi }(\mathcal{G}_X^{(d)})=1\mbox{.}$$
\end{Lemma}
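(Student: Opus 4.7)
The plan is to parlay the single identity $\varphi(f)=0$ (coming from $\varphi\in\mathcal{L}(X)$) together with the diagonal-genericity of $\varphi^{(d)}$ into both of the asserted equalities. Set $\alpha_0:=\mathrm{ord}_t(g_1(t))$ and $\alpha':=\mathrm{ord}_t(g'(t))$, so that $\mathrm{ord}(\varphi^{(d)})=\alpha'$, and by hypothesis $\mathrm{ord}(\varphi)=\alpha_0\leq\alpha'$.

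First, I would expand $\varphi(f)=0$ using the Tschirnhausen form (\ref{ec:f_Tsch2}) to obtain
$$
0 \;=\; g_1(t)^{b} \;+\; \sum_{i=0}^{b-2} \varphi^{(d)}(B_i)\, g_1(t)^{i}.
$$
Since $\mathrm{ord}_{\xi}(B_i)\geq b-i$, every nonzero summand satisfies $\mathrm{ord}_t\bigl(\varphi^{(d)}(B_i)\,g_1(t)^i\bigr)\geq \alpha'(b-i)+i\alpha_0$, while $g_1(t)^b$ has order exactly $b\alpha_0$. For the identity to hold, some index $i\in\{0,\dots,b-2\}$ must produce a summand of order at most $b\alpha_0$, forcing $\alpha'(b-i)+i\alpha_0\leq b\alpha_0$, i.e.\ $\alpha'\leq\alpha_0$; combined with $\alpha_0\leq\alpha'$ this pins down $\alpha_0=\alpha'$.

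Next, I would invoke the diagonal-genericity of $\varphi^{(d)}$ through Remark \ref{obs:desig_lema_arcos_diagonales} to obtain $\mathrm{ord}_{\xi}(\mathcal{G}_{X_0,\varphi^{(d)}}^{(1)}) = \alpha'\cdot\mathrm{ord}_{\xi}(\mathcal{G}_X^{(d)})$, and identify this quantity with $r_{X,\varphi}$ by Lemma \ref{lemma:ord_amalgama_final}. Dividing by $\mathrm{ord}(\varphi)=\alpha_0=\alpha'$ yields
$$
\bar{r}_{X,\varphi} \;=\; \mathrm{ord}_{\xi}(\mathcal{G}_X^{(d)}).
$$

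Finally, I would establish $\mathrm{ord}_{\xi}(\mathcal{G}_X^{(d)})=1$ by contradiction. The inequality $\geq 1$ is immediate from $\xi\in\mathrm{Sing}(\mathcal{G}_X^{(d)})$. If it were strictly greater than $1$, then by Lemma \ref{lema:ord_algebra_eliminacion} we would have $\mathrm{ord}_{\xi}(B_i)>b-i$ for every $i$, hence every nonzero summand $\varphi^{(d)}(B_i)\,g_1(t)^i$ would have order strictly greater than $b\alpha_0$ while $g_1(t)^b$ has order exactly $b\alpha_0$; the sum could not vanish, contradicting $\varphi(f)=0$. The main obstacle is the tight order bookkeeping in the first step, where only one direction of the inequality between $\alpha_0$ and $\alpha'$ comes from the hypothesis and the reverse has to be squeezed out of the Weierstrass-style cancellation in $\varphi(f)=0$; once $\alpha_0=\alpha'$ is in hand, the remaining arguments flow mechanically from previously established facts about eliminations and diagonal-generic arcs.
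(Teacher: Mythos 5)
Your proof is correct, and it runs on the same engine as the paper's --- order bookkeeping in $\varphi(f)=0$ for the Tschirnhausen form, combined with Lemma \ref{lemma:ord_amalgama_final} and the genericity identity $\mathrm{ord}_{\xi }(\mathcal{G}_{X_0,\varphi ^{(d)}}^{(1)})=\mathrm{ord}(\varphi ^{(d)})\cdot \mathrm{ord}_{\xi }(\mathcal{G}_X^{(d)})$ of Remark \ref{obs:desig_lema_arcos_diagonales} --- but it is organized genuinely differently. The paper runs the cancellation argument \emph{once}, at the fused threshold $L\cdot \mathrm{ord}_{\xi }(\mathcal{G}_X^{(d)})$: assuming $\mathrm{ord}_{t}(g_1)<L\cdot \mathrm{ord}_{\xi }(\mathcal{G}_X^{(d)})$ it combines the two estimates $\mathrm{ord}_{\xi }(\mathcal{G}_X^{(d)})\leq \mathrm{ord}_{\xi }(B_i)/(b-i)$ and $L\cdot \mathrm{ord}_{\xi }(B_i)\leq \mathrm{ord}_{t}(\varphi ^{(d)}(B_i))$ to contradict $\varphi (f)=0$, and then closes with the sandwich $1\leq \mathrm{ord}_{\xi }(\mathcal{G}_X^{(d)})\leq \overline{r}_{X,\varphi }\leq 1$, whose middle inequality is an appeal to Theorem \ref{thm:desigualdad}. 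You instead run the same cancellation \emph{twice} at two separate thresholds: first using only $\mathrm{ord}_{\xi }(B_i)\geq b-i$ to pin down $\alpha '=\alpha _0$ --- an equality the paper never states explicitly and obtains only implicitly at the very end --- which lets you get $\overline{r}_{X,\varphi }=\mathrm{ord}_{\xi }(\mathcal{G}_X^{(d)})$ by direct division, with no use of Theorem \ref{thm:desigualdad}; and second using the strict bound $\mathrm{ord}_{\xi }(B_i)>b-i$, extracted from Lemma \ref{lema:ord_algebra_eliminacion} under the assumption $\mathrm{ord}_{\xi }(\mathcal{G}_X^{(d)})>1$, to force $\mathrm{ord}_{\xi }(\mathcal{G}_X^{(d)})=1$. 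Your route buys independence from Theorem \ref{thm:desigualdad} (the lemma becomes self-contained modulo Lemmas \ref{lemma:ord_amalgama_final} and \ref{lema:ord_algebra_eliminacion}) together with the transparent intermediate fact $\alpha _0=\alpha '$; the paper's route buys a single contradiction argument and keeps the lemma aligned with the logical scaffolding already built for Theorem \ref{thm:igualdad}. All your individual steps check out: the hypothesis $\mathrm{ord}(\varphi )=\mathrm{ord}_{t}(g_1)$ does give $\alpha _0\leq \alpha '$, the lower bound $\alpha '(b-i)+i\alpha _0$ on each nonzero summand is valid because $B_i\in \mathcal{M}_{\xi ^{(d)}}^{b-i}$ and $\mathrm{ord}(\varphi ^{(d)})=\alpha '$, the division by $b-i\geq 2$ is legitimate, and your restriction to \emph{nonzero} summands correctly disposes of the degenerate possibilities $\varphi ^{(d)}(B_i)=0$.
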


\begin{proof}
Let us suppose that $g'(t)=t^L$ for some positive integer $L$, that is, $\varphi _{z_i}=u_it^L$ for $i=1,\ldots ,d$. By Lemma \ref{lemma:ord_amalgama_final},
$$\mathrm{ord}_{\xi }(\mathcal{G}_{X_0,\varphi }^{(1)})=\mathrm{ord}_{\xi }(\mathcal{G}_{X_0,\varphi ^{(d)}}^{(1)})\mbox{,}$$
and since $\varphi ^{(d)}$ is generic for $\mathcal{G}_X^{(d)}$, Remark \ref{obs:desig_lema_arcos_diagonales} yields
\begin{equation}\label{ec:amalgama_lema}\mathrm{ord}_{\xi }(\mathcal{G}_{X_0,\varphi }^{(1)})=L\cdot \mathrm{ord}_{\xi }(\mathcal{G}_X^{(d)})\mbox{.}\end{equation}
It suffices to prove that
\begin{equation}\label{desig:s_1}\mathrm{ord}_{t}(g_1(t))\geq L\cdot \mathrm{ord}_{\xi }(\mathcal{G}_X^{(d)})\mbox{,}\end{equation}
since it implies
\begin{equation}\label{conclusion:teorema_igualdad}1\leq \mathrm{ord}_{\xi }(\mathcal{G}_X^{(d)})\leq \overline{r}_{X,\varphi }=\frac{L\cdot \mathrm{ord}_{\xi }(\mathcal{G}_X^{(d)})}{\mathrm{ord}_{t}(g_1(t))}\leq 1\mbox{,}\end{equation}
where we have used Theorem \ref{thm:desigualdad} for the second inequality and (\ref{ec:amalgama_lema}) together with the definition of $\overline{r}_{X,\varphi }$ for the equality. Hence $\mathrm{ord}_{\xi }(\mathcal{G}_{X}^{(d)})=\overline{r}_{X,\varphi }=1$, concluding the proof of the Lemma.
In order to prove (\ref{desig:s_1}), let us suppose that our claim is false, that is:
\begin{equation}\label{desig:s_2}\mathrm{ord}_{t}(g_1(t))< L\cdot \mathrm{ord}_{\xi }(\mathcal{G}_X^{(d)})\mbox{.}\end{equation}
Then, in particular,
\begin{equation}\label{eq:aux_thm_tau_mayor}
\mathrm{ord}_{t}(g_1(t))<L\cdot \frac{\mathrm{ord}_{\xi }(B_i)}{b-i}\leq \frac{\mathrm{ord}_{t}(\varphi ^{(d)}(B_i))}{b-i}\mbox{\; for \; }i=0,\ldots ,b-2
\end{equation}
where the first inequality follows from the same argument used in the proof of Lemma \ref{lemma:ord_amalgama_final}. Therefore
$$\mathrm{ord}_{t}(\varphi ^{(d)}(B_i))>\mathrm{ord}_{t}(g_1(t))(b-i)$$
and
\begin{align*}
 \mathrm{ord}_{t}(\varphi (f-x^b)) & =\mathrm{ord}_{t}\left( \sum _{i=0}^{b-2}\varphi ^{(d)}(B_i)g_1(t)^i\right) \geq \min _{i=0,\ldots ,b-2}\left\{ \mathrm{ord}_{t}(\varphi ^{(d)}(B_i))+i\cdot \mathrm{ord}_{t}(g_1(t))\right\} > \\
 > & \min _{i=0,\ldots ,b-2}\left\{ \mathrm{ord}_{t}(g_1(t))(b-i)+i\cdot \mathrm{ord}_{t}(g_1(t))\right\} =b\cdot \mathrm{ord}_{t}(g_1(t))\mbox{,}
\end{align*}
where (\ref{eq:aux_thm_tau_mayor}) is needed for the second inequality. But this contradicts $\varphi (f)=0$ and hence the fact that $\varphi \in \mathcal{L}_{\xi }(X)$, so necessarily (\ref{desig:s_1}) holds, concluding the proof.
\end{proof}

\vspace{0.3cm}

\begin{Thm} \label{thm:igualdad}
Let $X$ be a $d$-dimensional variety over a field $k$ of characteristic zero which is locally a hypersurface in a neighbourhood of $\xi \in \mathrm{\underline{Max}\; mult }(X)$. Then there exists some $\varphi \in \mathcal{L}(X)$ through $\xi $, with the notation from Section \ref{subsubsec:setting_hyp} such that 
\begin{equation}\label{ec:equality_hyper}\overline{r}_{X,\varphi }=\mathrm{ord}_{\xi }(\mathcal{G}_X^{(d)})\mbox{.}\end{equation}
\end{Thm}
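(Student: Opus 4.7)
The plan is to construct an arc $\varphi \in \mathcal{L}_{\xi}(X)$ that realizes equality, by combining the lifting procedure of Lemma~\ref{lemma:lifting_arcs} with the dichotomy provided by Lemma~\ref{lemma:tau_mayor}. The underlying idea is that the inequality in Theorem~\ref{thm:desigualdad} becomes sharp exactly when the arc projects to a diagonal-generic arc for the elimination algebra $\mathcal{G}_X^{(d)}$; the hypersurface hypothesis then lets us write the lift in a normal form that is compatible with the Tschirnhausen expression~(\ref{ec:f_Tsch2}).

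First, following Remark~\ref{obs:desig_lema_arcos_diagonales}, one chooses units $u_1,\ldots,u_d \in k$ and a positive integer $\alpha$ so that the diagonal arc $\bar{\varphi}^{(d)} \in \mathcal{L}_{\xi^{(d)}}(V^{(d)})$ defined by $\bar{\varphi}^{(d)}(z_i) = u_i t^{\alpha}$ is diagonal-generic for $\mathcal{G}_X^{(d)}$; it suffices to pick the $u_i$ so that $(\mathrm{in}_{\xi} q)(u_1,\ldots,u_d) \neq 0$ for some generator $qW^l$ of $\mathcal{G}_X^{(d)}$ achieving $\mathrm{ord}_{\xi}(q)/l = \mathrm{ord}_{\xi}(\mathcal{G}_X^{(d)})$. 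Lemma~\ref{lemma:lifting_arcs} then produces an arc $\varphi \in \mathcal{L}_{\xi}(X)$ whose projection $\varphi^{(d)}$ remains diagonal-generic for $\mathcal{G}_X^{(d)}$. Since $X$ is locally a hypersurface ($n-d=1$), Remark~\ref{rem:lifting_arcs} gives the explicit form $\varphi = (g_1(t),\, u_1 g'(t),\ldots, u_d g'(t))$ for some $g_1(t), g'(t) \in K[[t]]$. Set $L := \mathrm{ord}_t(g')$, so that $\mathrm{ord}(\varphi^{(d)}) = L$ and $\mathrm{ord}(\varphi) = \min\{\mathrm{ord}_t(g_1), L\}$.

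Now split into two cases according to the value of $\mathrm{ord}_t(g_1)$. If $\mathrm{ord}_t(g_1) \geq L$, then $\mathrm{ord}(\varphi) = L$, and combining Lemma~\ref{lemma:ord_amalgama_final} with the diagonal-generic property (Remark~\ref{obs:desig_lema_arcos_diagonales}) yields
$$r_{X,\varphi} = \mathrm{ord}_{\xi}(\mathcal{G}_{X_0,\varphi}^{(1)}) = \mathrm{ord}_{\xi}(\mathcal{G}_{X_0,\varphi^{(d)}}^{(1)}) = L \cdot \mathrm{ord}_{\xi}(\mathcal{G}_X^{(d)}),$$
so that dividing by $\mathrm{ord}(\varphi) = L$ gives $\bar{r}_{X,\varphi} = \mathrm{ord}_{\xi}(\mathcal{G}_X^{(d)})$, as desired. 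If instead $\mathrm{ord}_t(g_1) < L$, then $\mathrm{ord}(\varphi) = \mathrm{ord}_t(g_1)$ and all the hypotheses of Lemma~\ref{lemma:tau_mayor} are met; that lemma gives directly $\bar{r}_{X,\varphi} = \mathrm{ord}_{\xi}(\mathcal{G}_X^{(d)}) = 1$. In either case, the desired equality is achieved.

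The main obstacle is that one cannot control $\mathrm{ord}_t(g_1)$ a priori in the lift, because the constraint $\varphi(f)=0$ couples $g_1$ to $g'$ through the Tschirnhausen form~(\ref{ec:f_Tsch2}). Lemma~\ref{lemma:tau_mayor} is precisely the tool that absorbs this subtlety: when the $x$-component has small order, the Tschirnhausen structure forces $\mathrm{ord}_{\xi}(\mathcal{G}_X^{(d)}) = 1$, and equality is automatic. Consequently the case split is exhaustive and the arc constructed by lifting is the one required.
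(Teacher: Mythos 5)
Your proof is correct and follows essentially the same route as the paper: lift a diagonal-generic arc via Lemma~\ref{lemma:lifting_arcs}, use the normal form of Remark~\ref{rem:lifting_arcs}, and split on the order of the $x$-component, handling the small-order case with Lemma~\ref{lemma:tau_mayor} and the other case by Lemma~\ref{lemma:ord_amalgama_final} together with genericity. The only cosmetic difference is that you phrase the dichotomy as $\mathrm{ord}_t(g_1)\geq L$ versus $\mathrm{ord}_t(g_1)<L$ while the paper splits on whether $\mathrm{ord}(\varphi)=\mathrm{ord}_t(g(t))$; both case splits are exhaustive and yield the same conclusion.
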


\begin{proof}
We can assume again that $X$ is locally given by $f$ as in (\ref{ec:f_Tsch2}). Pick a diagonal-generic arc for $\mathcal{G}^{(d)}_X$ (see Remark \ref{obs:desig_lema_arcos_diagonales} for the existence). By Lemma \ref{lemma:lifting_arcs} it can be lifted to an arc $\varphi $ in $X$ through $\xi $ whose projection $\varphi ^{(d)}$ onto $V^{(d)}$ is diagonal generic for $\mathcal{G}_X^{(d)}$. Remark \ref{rem:lifting_arcs} shows that $\varphi $ is given (as in (\ref{diag:triangle})) by 
\begin{equation}\label{eq:arco_identidad_hip}(g(t),u_1g'(t),\ldots ,u_dg'(t))\end{equation}
for some $g(t),g'(t)\in K[[t]]$ and $u_1,\ldots ,u_d\in k$. We only need to check that for such an arc (\ref{ec:equality_hyper}) holds. Let $N=\mathrm{ord}_{t}(g'(t))$. Note that, since $\varphi ^{(d)}$ is generic for $\mathcal{G}_X^{(d)}$, $\mathrm{ord}_{\xi }(\mathcal{G}_{X_0,\varphi ^{(d)}}^{(1)})=N\cdot \mathrm{ord}_{\xi }(\mathcal{G}_X^{(d)})$. By Lemma \ref{lemma:ord_amalgama_final},
\begin{equation}\label{ec:amalgama_teorema_igualdad} \mathrm{ord}_{\xi }(\mathcal{G}_{X_0,\varphi }^{(1)})=N\cdot \mathrm{ord}_{\xi }(\mathcal{G}_X^{(d)}) \mbox{.}\end{equation}
Consider now two possible situations, depending on whether $\mathrm{ord}(\varphi )=\mathrm{ord}_{t}(g(t))$ or not. If $\mathrm{ord}(\varphi )=\mathrm{ord}_{t}(g(t))$, then Lemma \ref{lemma:tau_mayor} implies
$$1=\mathrm{ord}_{\xi }(\mathcal{G}_{X}^{(d)})=\overline{r}_{X,\varphi }\mbox{.}$$
Otherwise $\mathrm{ord}(\varphi )=N$, and by definition of $\bar{r}_{X,\varphi }$ and (\ref{ec:amalgama_teorema_igualdad}), $\overline{r}_{X,\varphi }=\frac{N\cdot \mathrm{ord}_{\xi }(\mathcal{G}_X^{(d)})}{N}$, completing the proof.
\end{proof}
\vspace{0.2cm}

\begin{Rem}\label{rem:order_arc_id}
Under the assumptions of Theorem \ref{thm:igualdad}, let $\varphi $ be the arc (\ref{eq:arco_identidad_hip}) given by the proof. For this arc
\begin{equation}
\mathrm{ord}(\varphi )=N\mbox{.}
\end{equation}
To see this we observe that, since we have proved that $\overline{r}_{X,\varphi }=\mathrm{ord}_{\xi }(\mathcal{G}_X^{(d)})$, it follows easily from (\ref{ec:amalgama_teorema_igualdad}) that:
$$\mathrm{ord}_{\xi }(\mathcal{G}_X^{(d)})=\overline{r}_{X,\varphi }=\frac{\mathrm{ord}_{\xi }(\mathcal{G}_{X_0,\varphi }^{(1)})}{\mathrm{ord}(\varphi )}=\frac{N\cdot \mathrm{ord}_{\xi }(\mathcal{G}_X^{(d)})}{\mathrm{ord}(\varphi )} \Rightarrow \frac{N}{\mathrm{ord}(\varphi )}=1\mbox{.}$$\\
\end{Rem}
\vspace{0.2cm}

\subsection{Rees algebras and orders for the general case}\label{par:RA_orders_codim_gral}

As we have just done for the proof of Theorem \ref{thm:main} for hypersurfaces, we will use that we can find, in an \'etale neighbourhood of each point $\xi $ of $X$, a local presentation (as in Example \ref{ex3}) given by a collection of hypersurfaces and integers. For each of these hypersurfaces we will assume a nice expression in the line of \ref{subsubsec:setting_hyp}. As a consequence, for any arc $\varphi $ in $X$ through $\xi $ we will be able to give an expression of the algebra of contact of $\varphi $ with $\mathrm{\underline{Max}\; mult}(X)$ in terms of some algebras of contact of arcs with hypersurfaces. This will lead to an easy formula for $r_{X,\varphi }$. With these tools, we will prove in Theorem \ref{thm:desigualdad_gral} that $\mathrm{ord}_{\xi }\mathcal{G}_X^{(d)}$ is again a lower bound for $\bar{r}_{X,\varphi }$ for any arc $\varphi $, and that $\bar{r}_X$ is also a minimum in this case in Theorem \ref{thm:igualdad_gral}. They will come naturally from Theorems \ref{thm:desigualdad} and \ref{thm:igualdad} respectively.\\

\begin{Par}\textbf{Notation and hypothesis for the general case\\}\label{subsubsec:setting_gen}
Let $X$ be a variety of dimension $d$, and let $\xi $ be a point in $\mathrm{\underline{Max}\; mult}(X)$. We already explained in Example \ref{ex3} that, in an \'etale neighbourhood of $\xi $, we can find a local presentation for $X$ attached to the multiplicity, meaning an immersion in $V^{(n)}$, elements $f_i\in \mathcal{O}_{V^{(n)},\xi }=\mathcal{O}_{V^{(d)},\xi ^{(d)}}[x_1,\ldots ,x_{n-d}]$ and positive integers $b_i$ for $i=1,\ldots ,n-d$ as in (\ref{sep_var}), such that 
\begin{equation}\label{eq:G_gral}\mathcal{G}_X^{(n)}=\mathrm{Diff}(\mathcal{O}_{V^{(n)},\xi }[f_1W^{b_1},\ldots ,f_{n-d}W^{b_{n-d}}])\end{equation}
represents the function $\mathrm{mult}(X)$. Consider the differential closure of the $\mathcal{O}_{V^{(n+1)}_0,\xi _0^{(n+1)}}$-Rees algebra generated by the $f_i$, $\mathcal{G}_{X_0}^{(n+1)}$. We already mentioned that $f_i$ is the minimal polynomial of $\theta _i$ over $\mathcal{O}_{V^{(d)},\xi ^{(d)}}$, where $\mathcal{O}_{X,\xi }=\mathcal{O}_{V^{(d)},\xi ^{(d)}}[\theta _1,\ldots ,\theta _{n-d}]$, and we can assume (by \ref{ex2}) that each $f_i$ is of the form:
$$f_i=x_i^{b_i}+B_{\left\{ i\right\} ,b_i-2}x_i^{b_i-2}+\ldots +B_{\left\{ i\right\} ,0}\in \mathcal{O}_{V^{(d)},\xi ^{(d)}}[x_i]\subset \mathcal{O}_{V^{(d)},\xi ^{(d)}}[x_1,\ldots ,x_{n-d}]\mbox{,}$$
where $\left\{ z_1,\ldots ,z_d,t\right\} $ is a regular system of parameters in $\mathcal{O}_{V^{(d+1)}_0,\xi _0}$ and $\left\{ x_1,\ldots ,x_{n-d},z_1,\ldots ,z_d,t \right\} $ a regular system of parameters in $(\mathcal{O}_{V_{i,0}^{(e)},\xi }\otimes _{k} K[[t]])_{\xi _0}$, $B_{\left\{ i\right\} ,b_i-j}\in \mathcal{O}_{V^{(d)},\xi ^{(d)}}$ and $\mathrm{ord}_{\xi }(B_{\left\{ i\right\} ,b_i-j})\geq j$ for $j=2,\ldots ,b_i$, $i=1,\ldots ,n-d$.\\

By Example \ref{ex3}, we know that
$$\mathcal{G}_X^{(n)}=\mathrm{Diff}(\mathcal{O}_{V^{(n)},\xi }[f_1W^{b_1},\ldots ,f_{n-d}W^{b_{n-d}}])=\mathrm{Diff}(\mathcal{O}_{V^{(n)},\xi }[f_1W^{b_1}])\odot \ldots \odot \mathrm{Diff}(\mathcal{O}_{V^{(n)},\xi }[f_{n-d}W^{b_{n-d}}])\mbox{,}$$
where each $\mathrm{Diff}(\mathcal{O}_{V^{(n)},\xi }[f_{i}W^{b_{i}}])$ is the smallest differentially closed $\mathcal{O}_{V^{(n)},\xi }$-Rees algebra with the property of containing the algebra $\mathrm{Diff}(\mathcal{O}_{V^{(d)},\xi ^{(d)}}[x_i][f_iW^i])$, since $f_i\in \mathcal{O}_{V^{(d)},\xi ^{(d)}}[x_i]$. Therefore we can write
\begin{equation}\label{eq:diff_en_trozos}
\mathcal{G}_X^{(n)}=\mathrm{Diff}(\mathcal{O}_{V^{(d)},\xi ^{(d)}}[x_1][f_1W^{b_1}])\odot \ldots \odot \mathrm{Diff}(\mathcal{O}_{V^{(d)},\xi ^{(d)}}[x_{n-d}][f_{n-d}W^{b_{n-d}}])\mbox{.}
\end{equation}
Observe that, for each $f_i$, $H_i=\left\{ f_i=0\right\} $ is a hypersurface in $V_i^{(e)}=\mathrm{Spec}(\mathcal{O}_{V^{(d)},\xi ^{(d)}}[x_i])$, where $e=d+1$. Using the hypersurface case, the Rees algebra 
\begin{equation}\label{eq:G_i}
\mathcal{G}_{H_i}^{(e)}=\mathrm{Diff}(\mathcal{O}_{V^{(d)},\xi ^{(d)}}[x_i][f_iW^{b_i}])=\mathcal{O}_{V^{(d)},\xi ^{(d)}}[x_i][x_iW]\odot \mathcal{G}^{(d)}_{H_i}
\end{equation}
represents $\mathrm{mult}(H_i)$ (see Remark \ref{rem:ex1}).
\vspace{0.2cm}
\end{Par}

\begin{Rem}\label{rem:rem:relation_varieties_hypersurfaces0}
Using (\ref{eq:diff_en_trozos}) we can rewrite $\mathcal{G}_{X}^{(n)}$ in terms of the $\mathcal{G}_{H_i}^{(e)}$ for $i=1,\ldots ,n-d$:
\begin{align}\label{eq:rel_G_i-G}
& \mathcal{G}^{(n)}_{X}=\mathcal{G}^{(e)}_{H_{1}}\odot \ldots \odot \mathcal{G}^{(e)}_{H_{n-d}}= \notag \\
= \mathcal{O}_{V^{(d)},\xi ^{(d)}}[x_1][x_1W] & \odot \mathcal{G}_{H_1}^{(d)}\odot \ldots \odot \mathcal{O}_{V^{(d)},\xi ^{(d)}}[x_{n-d}][x_{n-d}W]\odot \mathcal{G}_{H_{n-d}}^{(d)}= \\
=\mathcal{O}_{V^{(n)},\xi } & [x_1W,\ldots ,x_{n-d}W]\odot \mathcal{G}_{H_1}^{(d)}\odot \ldots \odot \mathcal{G}_{H_{n-d}}^{(d)}\mbox{.} \notag
\end{align}

If one goes back to diagram (\ref{diag:comm_compl-elim}), using the factorization
\begin{equation}\label{diag:factor_Hi}
\xymatrix{
\mathcal{O}_{V^{(n)},\xi } \ar[r]^{\varphi } & K[[t]] \\
& \mathcal{O}_{V_i^{(e)},\xi }=\mathcal{O}_{V^{(d)},\xi ^{(d)}}[x_i]  \ar[ul] \ar[u]_{\varphi _i} \\
\mathcal{O}_{V^{(d)},\xi ^{(d)}} \ar[ur] \ar[uu]_{\beta ^*} &
}
\end{equation}
one can consider also the Rees algebras $\mathcal{G}_{H_{i,0}}^{(e+1)}$ and $\mathcal{G}_{\tilde{H}_{i,0}}^{(e+1)}$ induced by $\mathcal{G}_{H_i}^{(e)}$ over $\mathcal{O}_{V_{i,0}^{(e+1)},\xi_0 }=\mathcal{O}_{V_0^{(d+1)},\xi _0^{(d+1)}}[x_i]$ and $(\mathcal{O}_{V_{i,0}^{(e)},\xi }\otimes _{k} K[[t]])_{\xi _0}$ respectively.\\
\\
\end{Rem}
Consider now an arc $\varphi \in \mathcal{L}(X)$ through $\xi $, and the $\mathcal{O}_{\tilde{V}_0^{(n+1)},\xi _0}$-Rees algebra $\mathcal{G}_{X_0,\varphi }^{(n+1)}$ of contact of $\varphi $ with $\mathrm{\underline{Max}\; mult}(X)$. Let us suppose that $\varphi $ is given by $(\varphi _{x_1},\ldots ,\varphi _{x_{n-d}},\varphi _{z_1},\ldots ,\varphi _{z_{d}})$ as in (\ref{diag:triangle}). At the same time, for $i=1,\ldots ,n-d$, the projection of $\varphi $ onto $V_i^{(e)}$ by (\ref{diag:factor_Hi}) is an arc $\varphi _i$ given by $(\varphi _{x_i},\varphi _{z_1},\ldots ,\varphi _{z_d})$ in $\mathcal{L}(H_i)$. Therefore we can define 
\begin{equation}\label{eq:G_i-arc}
\mathcal{G}^{(e+1)}_{H_{i,0},\varphi _i}=\mathrm{Diff}((\mathcal{O}_{V^{(n)},\xi }\otimes _{k} K[[t]])_{\xi _0}[f_iW^{b_i},h_iW,h_{n-d+1}W,\ldots ,h_nW])=\mathcal{G}_{H_i}^{(e)}\odot \mathcal{G}_{\varphi _i}^{(e+1)}\mbox{,}
\end{equation}
where $h_i=x_i-\varphi _{x_i}$ for $i=1,\ldots ,n-d$ and $h_{n-d+j}=z_{j}-\varphi _{z_j}$ for $j=1,\ldots ,d$, and
\begin{align}\label{eq:RA_arc_gral_decomp}
\mathcal{G}_{\varphi }^{(n+1)} & =(\mathcal{O}_{V_{i,0}^{(e)},\xi }\otimes _{k} K[[t]])_{\xi _0} [h_1W,\ldots ,h_nW]=\notag \\
= (\mathcal{O}_{V_{i,0}^{(e)},\xi }\otimes _{k} K[[t]])_{\xi _0}[h_1W,h_{n-d+1}W, & \ldots ,h_nW]\odot \ldots \odot (\mathcal{O}_{V_{i,0}^{(e)},\xi }\otimes _{k} K[[t]])_{\xi _0}[h_{n-d}W,h_{n-d+1}W,\ldots ,h_nW]= \\
& =\mathcal{G}_{\varphi _1}^{(e+1)}\odot \ldots \odot \mathcal{G}_{\varphi _{n-d}}^{(e+1)}\mbox{.}\notag
\end{align}
Now we can use the result for hypersurfaces in Theorem \ref{thm:desigualdad} to assert that, for $i=1,\ldots ,n-d$,
$$\frac{\mathrm{ord}_{\xi }(\mathcal{G}_{H_{i,0},\varphi _i}^{(1)})}{\mathrm{ord}(\varphi _i)}\geq \mathrm{ord}_{\xi }(\mathcal{G}_{H_{i}}^{(d)})\mbox{.}$$
Note that 
\begin{equation}\label{orders_arcs_i}
\mathrm{ord}(\varphi )=\min _{i=1,\ldots ,n-d} \left\{ \mathrm{ord}(\varphi _i)\right\} \mbox{.}
\end{equation}
The following remark will be important for the generalization of Theorem \ref{thm:desigualdad}.
\vspace{0.2cm}

\begin{Rem}\label{rem:relation_varieties_hypersurfaces}The Rees algebra $\mathcal{G}^{(n+1)}_{X_0,\varphi }$ can be written in terms of the $\mathcal{G}^{(e+1)}_{H_{i,0},\varphi _i}$, by (\ref{eq:thm_amalgama}), (\ref{eq:rel_G_i-G}), (\ref{eq:RA_arc_gral_decomp}) and (\ref{eq:G_i-arc}):
\begin{align}\label{eq:rel_G_i-arc-G-arc}
\mathcal{G}_{X_0,\varphi }^{(n+1)}=\mathcal{G}_{X}^{(n+1)}\odot \mathcal{G}_{\varphi }^{(n+1)}= \mathcal{O}_{V_0^{(n+1)},\xi _0^{(n+1)}} & [x_1W,\ldots ,x_{n-d}W]\odot \mathcal{G}_{H_{1}}^{(d)}\odot \ldots \odot \mathcal{G}_{H_{n-d}}^{(d)} \odot \mathcal{G}_{\varphi }^{(n+1)}= \notag\\
=\mathcal{G}_{H_1}^{(e)}\odot \ldots \odot \mathcal{G}_{H_{n-d}}^{(e)}\odot \mathcal{G}_{\varphi _1}^{(e+1)}\odot \ldots \odot  & \mathcal{G}_{\varphi _{n-d}}^{(e+1)}=\mathcal{G}_{H_1}^{(e)}\odot \mathcal{G}_{\varphi _1}^{(e+1)}\odot \ldots \odot \mathcal{G}_{H_{n-d}}^{(e)}\odot \mathcal{G}_{\varphi _{n-d}}^{(e+1)} =\\
=\mathcal{G}_{H_{1,0},\varphi _1}^{(e+1)}\odot & \ldots \odot \mathcal{G}_{H_{n-d,0},\varphi _{n-d}}^{(e+1)} \mbox{.} \notag
\end{align}
\end{Rem}
By expressing the algebras $\mathcal{G}^{(n+1)}_{X_0}$ and $\mathcal{G}^{(n+1)}_{X_0,\varphi }$ in terms of Rees algebras attached to hypersurfaces as we have done in (\ref{eq:rel_G_i-G}) and (\ref{eq:rel_G_i-arc-G-arc}), it is easy to establish a relation among the order of all Rees algebras involved in both cases, as the following Lemma states:
\vspace{0.2cm}

\begin{Lemma}\label{lemma:rel_orders} Let $X$ be a $d$-dimensional variety.\\

\begin{enumerate}
	\item Let $\mathcal{G}_{X}^{(n)}$ and $\mathcal{G}_{H_{i}}^{(e)}$ be as in (\ref{eq:G_gral}) and (\ref{eq:G_i}). Let $\mathcal{G}_X^{(d)}$ and $\mathcal{G}_{H_i}^{(d)}$ be respectively the elimination Rees algebras associated to their projection over $V^{(d)}$. Then
\begin{equation}\label{eq:rel_elim_G_i-G}
\mathcal{G}_X^{(d)}=\mathcal{G}_{H_1}^{(d)}\odot \ldots \odot \mathcal{G}_{H_{n-d}}^{(d)}\mbox{,}
\end{equation}
and thus
\begin{equation}\label{eq:rel_ord_G_i-G}
\mathrm{ord}_{\xi }(\mathcal{G}_X^{(d)})=\min _{i=1,\ldots ,n-d}\left\{ \mathrm{ord}_{\xi }(\mathcal{G}_{H_i}^{(d)})\right\} \mbox{.}
\end{equation}

	\item Let $\mathcal{G}_{X_0,\varphi }^{(n+1)}$ and $\mathcal{G}_{H_{i,0},\varphi _i}^{(e+1)}$ be as in (\ref{eq:rel_G_i-arc-G-arc}) and (\ref{eq:G_i-arc}). Let $\mathcal{G}_{X_0,\varphi }^{(1)}$ and $\mathcal{G}^{(1)}_{H_{i,0},\varphi _i}$ be respectively their restrictions to the curves defined by the arcs $\varphi ,\varphi _1,\ldots \varphi _{n-d}$ (as in Proposition \ref{prop:eq_amalgama_ordenes}). Then
\begin{equation}\label{eq:rel_elim_G_i-arc-G-arc}
\mathcal{G}_{X_0,\varphi }^{(1)}=\mathcal{G}_{H_{1,0},\varphi _1}^{(1)}\odot \ldots \odot \mathcal{G}_{H_{n-d,0},\varphi _{n-d}}^{(1)}\mbox{.}
\end{equation}
As a consequence
\begin{equation}\label{eq:rel_ord_G_i-arc-G-arc}
\mathrm{ord}_{\xi }(\mathcal{G}_{X_0,\varphi }^{(1)})=\min _{i=1,\ldots ,n-d}\left\{ \mathrm{ord}_{\xi }(\mathcal{G}_{H_{i,0},\varphi _i}^{(1)})\right\} \mbox{.}
\end{equation}
\end{enumerate}

\end{Lemma}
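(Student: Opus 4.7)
My plan is to treat Part (1) and Part (2) separately, since both are structurally similar but rely on different earlier results.

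For Part (1), the equality $\mathcal{G}_X^{(d)}=\mathcal{G}_{H_1}^{(d)}\odot\cdots\odot\mathcal{G}_{H_{n-d}}^{(d)}$ is essentially the content of Example \ref{ex:elim_gral} (see also \cite[Remark 16.10]{Br_V2}). The argument I would give is the following: starting from the decomposition
$$\mathcal{G}_X^{(n)}=\mathcal{O}_{V^{(n)},\xi}[x_1W,\ldots,x_{n-d}W]\odot\mathcal{G}_{H_1}^{(d)}\odot\cdots\odot\mathcal{G}_{H_{n-d}}^{(d)}$$
recalled in (\ref{eq:rel_G_i-G}), one takes the intersection with $\mathcal{O}_{V^{(d)},\xi^{(d)}}[W]$ (up to integral closure) as in Definition \ref{def:elim_alg}. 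The generators $x_iW$ are killed by this intersection (they carry nontrivial $x_i$-degree), while each $\mathcal{G}_{H_i}^{(d)}$ is already an $\mathcal{O}_{V^{(d)},\xi^{(d)}}$-Rees algebra, so it survives. This produces (\ref{eq:rel_elim_G_i-G}). The order identity (\ref{eq:rel_ord_G_i-G}) is then immediate from Theorem \cite[Proposition 6.4.1]{E_V} stated early in Section 1, which says that the order of a finitely generated Rees algebra at $\xi$ equals the minimum of the orders of a set of generators; since the union of generating sets of the $\mathcal{G}_{H_i}^{(d)}$ generates the amalgamated sum, taking the minimum distributes over $\odot$.

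For Part (2), I would first observe that, by Proposition \ref{prop:eq_amalgama_ordenes} together with the remark following Definition \ref{def:image_G_by_arc}, the restriction $\mathcal{G}_{X_0,\varphi}^{(1)}$ of $\mathcal{G}_{X_0,\varphi}^{(n+1)}$ to the curve $C_0$ coincides with $\tilde{\Gamma}_0(\mathcal{G}_{X_0,\varphi}^{(n+1)})$, and analogously $\mathcal{G}_{H_{i,0},\varphi_i}^{(1)}=\tilde{\Gamma}_{0,i}(\mathcal{G}_{H_{i,0},\varphi_i}^{(e+1)})$ for each hypersurface. The key observation is that for any ring homomorphism $\psi:R\to R'$ and any two Rees algebras $\mathcal{G}_1=R[\{f_jW^{a_j}\}]$, $\mathcal{G}_2=R[\{g_lW^{b_l}\}]$, one has tautologically
$$\psi(\mathcal{G}_1\odot\mathcal{G}_2)=R'[\{\psi(f_j)W^{a_j}\},\{\psi(g_l)W^{b_l}\}]=\psi(\mathcal{G}_1)\odot\psi(\mathcal{G}_2),$$
since both sides are by definition generated by the union of the images of the generators. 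Applying this to the decomposition (\ref{eq:rel_G_i-arc-G-arc}) yields (\ref{eq:rel_elim_G_i-arc-G-arc}). The order formula (\ref{eq:rel_ord_G_i-arc-G-arc}) again follows from \cite[Proposition 6.4.1]{E_V} applied to the amalgamated sum.

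I do not anticipate a serious obstacle; the only point worth a line of justification is the compatibility of $\odot$ with the "restriction to the curve" operation, which I would handle by invoking the explicit generator description above rather than by passing through trees of closed sets. Everything else reduces to citing previously established facts: Example \ref{ex:elim_gral} for the shape of the elimination, Lemma \ref{lema:generadores_algebra2} for the hypersurface case, and the order-of-generators theorem for the minimum formulas.
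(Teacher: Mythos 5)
Your proposal is correct and takes essentially the same route as the paper: both parts rest on the decompositions (\ref{eq:rel_G_i-G}) and (\ref{eq:rel_G_i-arc-G-arc}), with (1) obtained by applying elimination to that expression (Example \ref{ex:elim_gral}, \cite[Remark 16.10]{Br_V2}) and (2) by compatibility of restriction to the curves with $\odot$, the order formulas then following from the minimum-over-generators result \cite[Proposition 6.4.1]{E_V}. The only cosmetic difference is that the paper justifies the restriction step by remarking that all the Rees algebras involved are differentially closed (cf.\ Proposition \ref{prop:eq_amalgama_ordenes}), whereas you verify it directly on generators via $\tilde{\Gamma }_0$; both justifications are adequate.
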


\begin{proof} Part (1) follows from the elimination of $\mathcal{G}_X^{(n)}$ associated to the projection $V^{(n)}\longrightarrow V^{(d)}$, using the expression in (\ref{eq:rel_G_i-G}). For (2), one must note, by looking at the expression in (\ref{eq:rel_G_i-arc-G-arc}), that the restriction of $\mathcal{G}_{X_0,\varphi }^{(n+1)}$ to the curve defined by $\varphi $ equals the smallest algebra containing the restrictions of the $\mathcal{G}_{H_{i,0},\varphi _{i}}^{(e+1)}=\mathcal{O}_{V_0^{(n+1)},\xi _0}[x_iW]\odot \mathcal{G}_{H_{i}}^{(d)}\odot \mathcal{G}_{\varphi _i}^{(e+1)}$ to the respective curves defined by the $\varphi _i$, since all the Rees algebras are differentially closed. 
\end{proof}
\vspace{0.5cm}

\subsubsection*{Results for the general case}

\begin{Thm}\label{thm:desigualdad_gral}
Let $X$ be a variety as in Section \ref{subsubsec:setting_gen}, let $\xi \in \mathrm{\underline{Max}\; mult}(X)$ and let $\varphi $ be an arc in $X$ through $\xi $ with the notation used there. Then 
\begin{equation}\label{eq:r_gral}
\overline{r}_{X,\varphi }\geq \mathrm{ord}_{\xi }(\mathcal{G}_X^{(d)})\mbox{.} 
\end{equation}
\end{Thm}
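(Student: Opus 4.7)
The plan is to reduce the statement to the hypersurface case treated in Theorem \ref{thm:desigualdad} by exploiting the fact that the local presentation for $X$ in Section \ref{subsubsec:setting_gen} separates the variables: each $f_i$ lies in $\mathcal{O}_{V^{(d)},\xi^{(d)}}[x_i]$ and thus defines a hypersurface $H_i\subset V_i^{(e)}$ with $e=d+1$, and an arc $\varphi\in\mathcal{L}(X)$ through $\xi$ projects via the factorization (\ref{diag:factor_Hi}) to an arc $\varphi_i\in\mathcal{L}(H_i)$ through $\xi$.

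First, I would apply Theorem \ref{thm:desigualdad} to each pair $(H_i,\varphi_i)$, which gives
$$\mathrm{ord}_{\xi}\bigl(\mathcal{G}_{H_{i,0},\varphi_i}^{(1)}\bigr)\geq \mathrm{ord}(\varphi_i)\cdot \mathrm{ord}_{\xi}\bigl(\mathcal{G}_{H_i}^{(d)}\bigr)\mbox{,}\qquad i=1,\ldots,n-d\mbox{.}$$
Next, I would feed these inequalities into the three identities assembled in Lemma \ref{lemma:rel_orders} and formula (\ref{orders_arcs_i}): namely
$$\mathrm{ord}_{\xi}\bigl(\mathcal{G}_{X_0,\varphi}^{(1)}\bigr)=\min_i\mathrm{ord}_{\xi}\bigl(\mathcal{G}_{H_{i,0},\varphi_i}^{(1)}\bigr)\mbox{,}\quad \mathrm{ord}_{\xi}\bigl(\mathcal{G}_{X}^{(d)}\bigr)=\min_i\mathrm{ord}_{\xi}\bigl(\mathcal{G}_{H_i}^{(d)}\bigr)\mbox{,}\quad \mathrm{ord}(\varphi)=\min_i\mathrm{ord}(\varphi_i)\mbox{.}$$

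Now pick an index $j$ realizing the minimum in the first identity. Then
$$\mathrm{ord}_{\xi}\bigl(\mathcal{G}_{X_0,\varphi}^{(1)}\bigr)=\mathrm{ord}_{\xi}\bigl(\mathcal{G}_{H_{j,0},\varphi_j}^{(1)}\bigr)\geq \mathrm{ord}(\varphi_j)\cdot \mathrm{ord}_{\xi}\bigl(\mathcal{G}_{H_j}^{(d)}\bigr)\geq \mathrm{ord}(\varphi)\cdot \mathrm{ord}_{\xi}\bigl(\mathcal{G}_X^{(d)}\bigr)\mbox{,}$$
where the first inequality is Theorem \ref{thm:desigualdad} for $H_j$ and the second uses both $\mathrm{ord}(\varphi_j)\geq \mathrm{ord}(\varphi)$ and $\mathrm{ord}_{\xi}(\mathcal{G}_{H_j}^{(d)})\geq \mathrm{ord}_{\xi}(\mathcal{G}_X^{(d)})$. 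Dividing by $\mathrm{ord}(\varphi)$ yields (\ref{eq:r_gral}).

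The only point requiring a bit of care is the fact that the index achieving $\min_i\mathrm{ord}_{\xi}(\mathcal{G}_{H_{i,0},\varphi_i}^{(1)})$ need not coincide with the one achieving $\min_i\mathrm{ord}(\varphi_i)$ or $\min_i\mathrm{ord}_{\xi}(\mathcal{G}_{H_i}^{(d)})$, but since both bounds used in the chain above are lower bounds, replacing $\mathrm{ord}(\varphi_j)$ by the smaller $\mathrm{ord}(\varphi)$ and $\mathrm{ord}_{\xi}(\mathcal{G}_{H_j}^{(d)})$ by the smaller $\mathrm{ord}_{\xi}(\mathcal{G}_X^{(d)})$ goes in the correct direction. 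Thus no extra argument is needed and the hypersurface case suffices to conclude.
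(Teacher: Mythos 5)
Your proposal is correct and follows essentially the same route as the paper's proof: both reduce to the hypersurface case via Theorem \ref{thm:desigualdad} applied to each $(H_i,\varphi_i)$, combined with the three identities from Lemma \ref{lemma:rel_orders} and (\ref{orders_arcs_i}). The only cosmetic difference is that you single out the index $j$ realizing $\min_i\mathrm{ord}_{\xi}(\mathcal{G}_{H_{i,0},\varphi_i}^{(1)})$, whereas the paper establishes the chain $\mathrm{ord}_{\xi}(\mathcal{G}_{H_{i,0},\varphi_i}^{(1)})/\mathrm{ord}(\varphi)\geq \mathrm{ord}_{\xi}(\mathcal{G}_{X}^{(d)})$ uniformly for all $i$ before taking the minimum — the two phrasings are logically identical.
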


\begin{proof} From (\ref{eq:rel_ord_G_i-arc-G-arc}) we obtain
\begin{equation*}
\overline{r}_{X,\varphi }=\frac{\mathrm{ord}_{\xi }(\mathcal{G}_{X_0,\varphi }^{(1)})}{\mathrm{ord}(\varphi )}=\frac{\min _{i=1,\ldots ,n-d}\left\{ \mathrm{ord}_{\xi }(\mathcal{G}_{H_{i,0},\varphi _i}^{(1)})\right\} }{\mathrm{ord}(\varphi )}\mbox{.}
\end{equation*}
For every $i\in \left\{ 1,\ldots ,n-d\right\} $, Theorem \ref{thm:desigualdad} gives
$$\frac{\mathrm{ord}_{\xi }(\mathcal{G}_{H_{i,0},\varphi _i}^{(1)})}{\mathrm{ord}(\varphi _i)}\geq \mathrm{ord}_{\xi }(\mathcal{G}_{H_i}^{(d)})\mbox{,}$$
and this together with (\ref{orders_arcs_i}) and (\ref{eq:rel_ord_G_i-G}) implies
\begin{equation*}
\frac{\mathrm{ord}_{\xi }(\mathcal{G}_{H_{i,0},\varphi _i}^{(1)})}{\mathrm{ord}(\varphi )}\geq \frac{\mathrm{ord}_{\xi }(\mathcal{G}_{H_{i,0},\varphi _i}^{(1)})}{\mathrm{ord}(\varphi _i)}\geq \mathrm{ord}_{\xi }(\mathcal{G}_{H_i}^{(d)})\geq \mathrm{ord}_{\xi }(\mathcal{G}_{X}^{(d)})\mbox{,\; \;}\forall i=1,\ldots ,n-d\mbox{.}
\end{equation*}
As a consequence, we get
\begin{equation*}
\overline{r}_{X,\varphi }=\frac{\min _{i=1,\ldots ,n-d}\left\{ \mathrm{ord}_{\xi }(\mathcal{G}_{H_{i,0},\varphi _i}^{(1)})\right\} }{\mathrm{ord}(\varphi )}\geq \mathrm{ord}_{\xi }(\mathcal{G}_{X}^{(d)})\mbox{,}
\end{equation*}
concluding the proof of the Theorem.
\end{proof}
\vspace{0.2cm}

\begin{Rem}\label{rem:lifting_arcs_gral}
If $k$ is a field of characteristic zero, it is always possible to find a diagonal arc $\bar{\varphi }^{(d)}$ which is diagonal-generic for $\mathcal{G}_{H_i}^{(d)}$ for $i=1,\ldots ,n-d$. As we did in Remark \ref{obs:desig_lema_arcos_diagonales}, one needs only to consider for each $i\in \left\{ 1,\ldots ,n-d\right\} $, an element $p_iW^{l_i}\in \mathcal{G}_{H_i}^{(d)}$ such that $\mathrm{ord}_{\xi }(\mathcal{G}_{H_i}^{(d)})=\frac{\mathrm{ord}_{\xi }(p_i)}{l_i}$ and find units $u_1,\ldots ,u_d\in k$ such that $\mathrm{in}_{\xi }(p_i)(u_1,\ldots ,u_d)\neq 0$ for $i=1,\ldots ,n-d$, which is possible because we are considering once more a finite set of elements $\left\{ p_1,\ldots ,p_{n-d}\right\} $ and an infinite field $k$. Now, the arc $\bar{\varphi }^{(d)}$ given as $(u_1t^{\alpha },\ldots ,u_dt^{\alpha })$,
where $\alpha $ is some positive integer, is diagonal-generic for $\mathcal{G}_{H_i}^{(d)}$ for all $i=1,\ldots ,n-d$. Note that, in particular, $\bar{\varphi }^{(d)}$ is diagonal-generic for $\mathcal{G}_X^{(d)}$ (this follows from Lemma \ref{lemma:rel_orders}). Note also that by Lemma \ref{lemma:lifting_arcs}, $\bar{\varphi }^{(d)}$ can be lifted to an arc $\varphi $ in $X$ and the projection of $\varphi $ onto $V^{(d)}$ is diagonal-generic for every $\mathcal{G}_{H_i}^{(d)}$, $i=1,\ldots ,n-d$.
\end{Rem}

\begin{Thm}\label{thm:igualdad_gral}
Let $X$ be a variety as in Section \ref{subsubsec:setting_gen} and let $\xi \in \mathrm{\underline{Max}\; mult}(X)$. There exists an arc $\varphi \in \mathcal{L}(X)$ through $\xi $ such that
\begin{equation}\label{eq:igualdad_gral}
\overline{r}_{X,\varphi }=\mathrm{ord}_{\xi }(\mathcal{G}_{X}^{(d)})\mbox{.}
\end{equation}
\end{Thm}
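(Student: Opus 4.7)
The plan is to build on the hypersurface case (Theorem \ref{thm:igualdad}) together with the multiplicative decomposition of the contact algebra and of the $d$-dimensional elimination algebra given in Lemma \ref{lemma:rel_orders}. First, by Remark \ref{rem:lifting_arcs_gral}, I would choose a diagonal arc $\bar{\varphi}^{(d)}$ in $V^{(d)}$ through $\xi^{(d)}$ that is simultaneously diagonal-generic for every elimination algebra $\mathcal{G}_{H_i}^{(d)}$, $i=1,\ldots,n-d$; this is where the infinite base field is used, to avoid finitely many nonvanishing initial-form conditions all at once. Applying Lemma \ref{lemma:lifting_arcs} then produces an arc $\varphi \in \mathcal{L}(X)$ through $\xi$ whose projection $\varphi^{(d)}$ onto $V^{(d)}$ is still diagonal-generic for each $\mathcal{G}_{H_i}^{(d)}$ (the argument for $\mathcal{G}_X^{(d)}$ in that lemma applies verbatim to each hypersurface algebra), and by Remark \ref{rem:lifting_arcs} the arc has the form
$$\varphi = (g_1(t),\ldots ,g_{n-d}(t),u_1 g'(t),\ldots ,u_d g'(t))$$
for some $g'(t),g_i(t)\in K[[t]]$ and units $u_j$.

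Next, each projection $\varphi_i$ of $\varphi$ onto $V_i^{(e)}$ via the factorization in (\ref{diag:factor_Hi}) has the form $(g_i(t),u_1 g'(t),\ldots,u_d g'(t))$, which is precisely the shape of the arc produced by the proof of Theorem \ref{thm:igualdad} for the hypersurface $H_i$. Setting $N := \mathrm{ord}_t g'(t)$, Remark \ref{rem:order_arc_id} applied to each $H_i$ gives $\mathrm{ord}(\varphi_i)=N$, whence $\mathrm{ord}(\varphi)=\min_i \mathrm{ord}(\varphi_i)=N$ by (\ref{orders_arcs_i}). Moreover Theorem \ref{thm:igualdad} itself yields
$$\mathrm{ord}_{\xi}(\mathcal{G}_{H_{i,0},\varphi_i}^{(1)}) = \overline{r}_{H_i,\varphi_i}\cdot \mathrm{ord}(\varphi_i) = N\cdot \mathrm{ord}_{\xi}(\mathcal{G}_{H_i}^{(d)}),\qquad i=1,\ldots,n-d.$$
Combining this with the two identities in Lemma \ref{lemma:rel_orders} I obtain
$$\overline{r}_{X,\varphi}=\frac{\mathrm{ord}_{\xi}(\mathcal{G}_{X_0,\varphi}^{(1)})}{\mathrm{ord}(\varphi)}=\frac{\min_{i}\mathrm{ord}_{\xi}(\mathcal{G}_{H_{i,0},\varphi_i}^{(1)})}{N}=\min_{i}\mathrm{ord}_{\xi}(\mathcal{G}_{H_i}^{(d)})=\mathrm{ord}_{\xi}(\mathcal{G}_X^{(d)}),$$
which is the desired equality.

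The main obstacle is the step of producing a \emph{single} arc in $X$ whose projections to the various $V_i^{(e)}$ work simultaneously for all the hypersurfaces $H_i$ appearing in the local presentation: the generic-arc conditions on $\mathcal{G}_{H_i}^{(d)}$ depend on a common choice of units $u_1,\ldots,u_d$, and the lifting procedure of Lemma \ref{lemma:lifting_arcs} must preserve genericity for all of them, not just for $\mathcal{G}_X^{(d)}$. Once this is arranged using Remark \ref{rem:lifting_arcs_gral} (which leans on the infinite residue field to intersect finitely many Zariski-open conditions on the $u_j$), the remaining work is routine bookkeeping: it reduces to the order identities of Lemma \ref{lemma:rel_orders} and the calibration $\mathrm{ord}(\varphi)=N$ that was already established in the hypersurface case.
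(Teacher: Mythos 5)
Your proposal is correct and takes essentially the same approach as the paper's own proof: the same simultaneous diagonal-generic choice via Remark \ref{rem:lifting_arcs_gral}, the same lifting through Lemma \ref{lemma:lifting_arcs} and Remark \ref{rem:lifting_arcs}, the calibration $\mathrm{ord}(\varphi )=N$ via Remark \ref{rem:order_arc_id} and (\ref{orders_arcs_i}), and the final computation through Lemma \ref{lemma:rel_orders}. Your only refinement is making explicit that one invokes the \emph{proof} of Theorem \ref{thm:igualdad} (the constructed arc's shape) applied to each projection $\varphi _i$, rather than merely its existential statement, which the paper leaves implicit.
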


\begin{proof}
By Remark \ref{rem:lifting_arcs_gral}, we can choose a diagonal arc which is diagonal-generic for $\mathcal{G}_{H_1}^{(d)},\ldots ,\mathcal{G}_{H_{n-d}}^{(d)}$ and $\mathcal{G}_X^{(d)}$. Let us denote it by $\bar{\varphi }^{(d)}$. We can lift $\bar{\varphi }^{(d)}$ to an arc $\varphi \in \mathcal{L}(X)$ through $\xi $. By Remark \ref{rem:lifting_arcs} we know that $\varphi $ is given (as in (\ref{diag:triangle})) by
\begin{equation*}
(g_1(t),\ldots ,g_{n-d}(t),u_1g'(t),\ldots ,u_dg'(t))
\end{equation*}
for some $g_1(t),\ldots g_{n-d}(t),g'(t)\in K[[t]]$ and some $u_1,\ldots ,u_d\in k$ by Lemma \ref{lemma:lifting_arcs}. By Remark \ref{rem:lifting_arcs_gral}, $\varphi ^{(d)}$ is also generic for $\mathcal{G}_{H_i}^{(d)}$, $i=1,\ldots ,n-d$. The proof will be complete by showing that any arc of this form satisfies (\ref{eq:igualdad_gral}).\\
\\
Let us denote $N=\mathrm{ord}_{t}(g'(t))$. As in (\ref{diag:fact_hyp}), $\beta $ factorizes via $\mathcal{O}_{H_i,\xi }$ for $i=1,\ldots ,n-d$:
\begin{equation}
\xymatrix{ 
\mathcal{O}_{X^{(d)},\xi }\cong \mathcal{O}_{V^{(d)},\xi ^{(d)}}[x_1,\ldots ,x_{n-d}]/I(X) & & \mathcal{O}_{V^{(d)},\xi ^{(d)}}[x_1,\ldots ,x_{n-d}] \ar[ll] \\
\mathcal{O}_{H_i,\xi }\cong \mathcal{O}_{V^{(d)},\xi ^{(d)}}[x_i]/(f_i) \ar[u] & & \\
& \mathcal{O}_{V^{(d)},\xi ^{(d)}} \ar[uul]_{\beta _X^*} \ar[uur]^{\beta ^*}  \ar[ul] & 
}
\end{equation}
and hence the projection $\varphi _i$ of $\varphi $ onto $V_i^{(d+1)}$ is, in particular, a lifting of $\bar{\varphi }^{(d)}$ to $H_i$, and the projection of each $\varphi _i$ to $V^{(d)}$ is $\varphi ^{(d)}$, which is diagonal-generic for $\mathcal{G}_{H_i}^{(d)}$. Thus, the result of Theorem \ref{thm:igualdad} holds for each $H_i$, as well as Remark \ref{rem:order_arc_id}, implying 
$$\mathrm{ord}_{\xi }(\mathcal{G}_{H_{i,0},\varphi _i}^{(1)})=\mathrm{ord}(\varphi _i)\cdot \mathrm{ord}_{\xi }(\mathcal{G}_{H_i}^{(d)})=N\cdot \mathrm{ord}_{\xi }(\mathcal{G}_{H_i}^{(d)})$$
for $i=1,\ldots ,n-d$. By (\ref{orders_arcs_i}) we also know that $\mathrm{ord}(\varphi )=N$. From this, together with Lemma \ref{lemma:rel_orders}, it follows that
\begin{align*}
&\bar{r}_{X,\varphi }=\frac{\mathrm{ord}_{\xi }(\mathcal{G}_{X_0,\varphi }^{(1)})}{\mathrm{ord}(\varphi )}=\frac{\min _{i=1,\ldots ,n-d}\left\{ \mathrm{ord}_{\xi }(\mathcal{G}_{H_{i,0},\varphi _i}^{(1)})\right\} }{N}= \\
& = \frac{\min _{i=1,\ldots ,n-d}\left\{ N\cdot \mathrm{ord}_{\xi }(\mathcal{G}_{H_i}^{(d)})\right\} }{N}=\min _{i=1,\ldots ,n-d}\left\{ \mathrm{ord}_{\xi }(\mathcal{G}_{H_i}^{(d)})\right\} =\mathrm{ord}_{\xi }(\mathcal{G}_X^{(d)})\mbox{,}
\end{align*}
which completes the proof.
\end{proof}

\subsection{Consequences of the main result}

When we first presented our results in Section \ref{subsec:RAandNash}, we gave there a version of Theorem \ref{thm:main}, which relates the invariants $\bar{r}_{X}$ and $\mathrm{ord}_{\xi }\mathcal{G}_{X}^{(d)}$ for any $X$. It is clear now that the statement there is a consequence of Theorems \ref{thm:desigualdad_gral} and \ref{thm:igualdad_gral}. In addition, we claimed to know a relation between $\mathrm{ord}_{\xi }\mathcal{G}_{X}^{(d)}$ and $\rho _{X,\varphi }$ for any arc $\varphi $ in $X$ through $\xi $. The following theorem shows this relation, which is just a small step more than a consequence of Proposition \ref{thm:r_elim_amalgama} and Theorem \ref{thm:main}.\\
\\
\begin{Thm}\label{thm:main_order-rho}
Let $X$ be a variety of dimension $d$. Let $\xi $ be a point in $\mathrm{\underline{Max}\, mult}(X)$. For any arc $\varphi $ in $X$ through $\xi $, 
$$\rho _{X,\varphi }\geq \left[ \mathrm{ord}_{\xi }\mathcal{G}_X^{(d)} \right] \cdot \mathrm{ord}(\varphi )\mbox{,}$$
where $\mathcal{G}_X^{(d)}$ is the elimination algebra described in Example \ref{ex:elim_gral}. Moreover, 
$$\inf _{\varphi \in \mathcal{L}_{\xi }(X)}\left\{ [\bar{\rho }_{X,\varphi }]\right\} =\left[ \mathrm{ord}_{\xi }\mathcal{G}_X^{(d)}\right] \mbox{.}$$
one can find an arc $\varphi _0$ in $X$ through $\xi $ satisfying
$$\bar{\rho }_{X,\varphi _0}=\mathrm{ord}_{\xi }\mathcal{G}_X^{(d)}\mbox{.}$$
\end{Thm}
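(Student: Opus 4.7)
The plan is to bootstrap this theorem directly from Proposition \ref{thm:r_elim_amalgama}, which relates $\rho_{X,\varphi}$ to $r_{X,\varphi}$ via an integer part, together with Theorem \ref{thm:main}. The first inequality is almost a formal consequence, while the sharp arc $\varphi_0$ will be produced from the arc given by Theorem \ref{thm:main} by a reparametrization designed to clear the denominator of $\mathrm{ord}_{\xi}\mathcal{G}_X^{(d)}$.

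For the first inequality, fix any $\varphi \in \mathcal{L}_{\xi}(X)$. Theorem \ref{thm:main} gives $r_{X,\varphi} \geq \mathrm{ord}(\varphi) \cdot \mathrm{ord}_{\xi}\mathcal{G}_X^{(d)}$, and since $\mathrm{ord}(\varphi)$ is a positive integer, one has $[\mathrm{ord}(\varphi) \cdot \mathrm{ord}_{\xi}\mathcal{G}_X^{(d)}] \geq \mathrm{ord}(\varphi) \cdot [\mathrm{ord}_{\xi}\mathcal{G}_X^{(d)}]$. Combining with $\rho_{X,\varphi} = [r_{X,\varphi}]$ from Proposition \ref{thm:r_elim_amalgama} yields
$$\rho_{X,\varphi} \geq \mathrm{ord}(\varphi) \cdot [\mathrm{ord}_{\xi}\mathcal{G}_X^{(d)}].$$
Dividing by $\mathrm{ord}(\varphi)$ and taking integer parts gives $[\bar{\rho}_{X,\varphi}] \geq [\mathrm{ord}_{\xi}\mathcal{G}_X^{(d)}]$, which already furnishes one half of the infimum equality.

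For the other half and the existence of $\varphi_0$, apply Theorem \ref{thm:main} to obtain an arc $\varphi_1 \in \mathcal{L}_{\xi}(X)$ with $\bar{r}_{X,\varphi_1} = \mathrm{ord}_{\xi}\mathcal{G}_X^{(d)}$. Writing this rational number as $p/q$ with $p,q \in \mathbb{Z}_{>0}$, the number $r_{X,\varphi_1}$ need not itself be an integer, so I would replace $\varphi_1$ by the reparametrized arc $\varphi_0 := \sigma_q \circ \varphi_1$, where $\sigma_q: K[[t]] \to K[[t]]$ is the endomorphism $t \mapsto t^q$. This is still an arc in $X$ through $\xi$. The crux of the argument is to verify the two scaling identities $\mathrm{ord}(\varphi_0) = q \cdot \mathrm{ord}(\varphi_1)$ and $r_{X,\varphi_0} = q \cdot r_{X,\varphi_1}$. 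The first is immediate from the definition of the order of an arc; the second one, which is the main obstacle, I would read off from the explicit formula $\tilde{\Gamma}_0(\mathcal{G}_{X_0,\varphi}^{(n+1)}) = K[[t]][\varphi(f_1)W^{b_1},\ldots,\varphi(f_{n-d})W^{b_{n-d}}]$ recorded after Definition \ref{def:image_G_by_arc}, since substitution $t \mapsto t^q$ multiplies every $t$-adic order by $q$. Granting these, $r_{X,\varphi_0} = p \cdot \mathrm{ord}(\varphi_1) \in \mathbb{Z}_{>0}$, and Proposition \ref{thm:r_elim_amalgama} now gives $\rho_{X,\varphi_0} = r_{X,\varphi_0}$, so that
$$\bar{\rho}_{X,\varphi_0} = \frac{r_{X,\varphi_0}}{\mathrm{ord}(\varphi_0)} = \bar{r}_{X,\varphi_0} = \bar{r}_{X,\varphi_1} = \mathrm{ord}_{\xi}\mathcal{G}_X^{(d)}.$$
This proves the final claim, and taking integer parts supplies $[\bar{\rho}_{X,\varphi_0}] = [\mathrm{ord}_{\xi}\mathcal{G}_X^{(d)}]$, closing the infimum equality. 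Should the direct appeal to the remark feel insufficient, I would instead note that $\varphi_0$ and $\varphi_1$ cut out the same set-theoretic curve in $\tilde{V}_0^{(n+1)}$, so that $\mathcal{G}_{X_0,\varphi_0}^{(1)}$ is the pullback of $\mathcal{G}_{X_0,\varphi_1}^{(1)}$ along the degree-$q$ self-map of that curve, making the scaling of orders by $q$ formal.
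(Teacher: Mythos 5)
Your main line of argument is correct and follows essentially the paper's strategy: the first inequality comes from the same integer-part manipulation of $\rho_{X,\varphi}=[r_{X,\varphi}]$ (Proposition \ref{thm:r_elim_amalgama}) combined with the inequality half of Theorem \ref{thm:main} (i.e.\ Theorem \ref{thm:desigualdad_gral}), and the sharp arc is produced, exactly as in the paper, by reparametrizing the extremal arc of Theorem \ref{thm:igualdad_gral} via $t\mapsto t^{q}$ so that $r_{X,\varphi_0}$ becomes an integer and hence $\rho_{X,\varphi_0}=r_{X,\varphi_0}$. You differ in two details, both defensible and arguably cleaner. First, the paper substitutes $t\mapsto t^{b}$ and justifies integrality of $r_{X,\varphi_0}$ by asserting that $\mathrm{ord}_{\xi}\mathcal{G}_X^{(d)}$ lies in $\frac{1}{b}\cdot\mathbb{Z}_{>0}$, which is not obviously true (the generators of the elimination algebra carry weights $b-i$, so a priori the denominator need not divide $b$); your choice of $q$ as the exact denominator of $\bar{r}_{X,\varphi_1}$ clears the denominator unconditionally. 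Second, where you prove the scaling law $r_{X,\varphi_0}=q\cdot r_{X,\varphi_1}$ directly from the formula $\mathcal{G}^{(1)}_{X_0,\varphi}=K[[t]][\varphi(f_1)W^{b_1},\ldots,\varphi(f_{n-d})W^{b_{n-d}}]$ of the Remark following Definition \ref{def:image_G_by_arc} (a legitimate citation, since substitution $t\mapsto t^q$ multiplies every $t$-adic order by $q$), the paper avoids stating any scaling law: it observes instead that $\varphi_0$ again has the normal form $(g_1(t^b),\ldots,g_{n-d}(t^b),u_1g'(t^b),\ldots,u_dg'(t^b))$ with diagonal-generic projection, so that Theorem \ref{thm:igualdad_gral} applies verbatim to give $\bar{r}_{X,\varphi_0}=\mathrm{ord}_{\xi}\mathcal{G}_X^{(d)}$.

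One genuine flaw, though it sits only in your dispensable fallback: $\varphi_0=\sigma_q\circ\varphi_1$ and $\varphi_1$ do \emph{not} cut out the same set-theoretic curve in $\tilde{V}_0^{(n+1)}$. The curve $C_0$ is the graph, defined by the equations $y_i-\varphi_{y_i}(t)=0$, and for example $\{y=t\}\neq\{y=t^{q}\}$; what the two arcs share is their kernel on $\mathcal{O}_{X,\xi}$, i.e.\ the image curve in $X$, not the graph. There is a degree-$q$ map between the two \emph{distinct} graph curves, induced by $t\mapsto t^{q}$ on $\tilde{V}_0^{(n+1)}$ (identity on $\mathcal{O}_{V^{(n)},\xi}$), and one could repair the fallback by pulling the contact algebra back along that map; but the "degree-$q$ self-map of that curve" formulation is incorrect as stated. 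Since your primary route through the Remark is sound, the proof as a whole stands.
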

\vspace{0.5cm}
\begin{proof}
For the first formula we use Proposition \ref{thm:r_elim_amalgama} and Theorem \ref{thm:desigualdad_gral}
$$\rho _{X,\varphi }=\left[ r_{X,\varphi } \right] = \left[ \frac{r_{X,\varphi }}{\mathrm{ord}(\varphi )}\cdot \mathrm{ord}(\varphi ) \right] \geq \left[ \frac{r_{X,\varphi }}{\mathrm{ord}(\varphi )}  \right] \cdot \mathrm{ord}(\varphi ) \geq \left[ \mathrm{ord}_{\xi }\mathcal{G}_X^{(d)} \right] \cdot \mathrm{ord}(\varphi )\mbox{.}$$
As a consequence, of this result,
$$\frac{r_{X,\varphi }}{\mathrm{ord}(\varphi )}\geq \frac{[r_{X,\varphi }]}{\mathrm{ord}(\varphi )}=\frac{\rho _{X,\varphi }}{\mathrm{ord}(\varphi )}\geq \frac{\left[ \mathrm{ord}_{\xi }\mathcal{G}_X^{(d)} \right] \cdot \mathrm{ord}(\varphi )}{\mathrm{ord}(\varphi )}=\left[ \mathrm{ord}_{\xi }\mathcal{G}_X^{(d)} \right] \mbox{.}$$
That is,
$$\bar{r}_{X,\varphi }\geq \bar{\rho }_{X,\varphi }\geq \left[ \mathrm{ord}_{\xi }\mathcal{G}_X^{(d)} \right] \mbox{,}$$
where we may take integral parts and then the minimums over all arcs in $X$ through $\xi $, obtaining 
$$\mathrm{min}_{\varphi \in \mathcal{L}_{\xi }(X)}\left\{ [\bar{r}_{X,\varphi }]\right\} \geq \mathrm{min}_{\varphi \in \mathcal{L}_{\xi }(X)}\left\{ [\bar{\rho }_{X,\varphi }]\right\} \geq \left[ \mathrm{ord}_{\xi }\mathcal{G}_X^{(d)} \right] =\mathrm{min}_{\varphi \in \mathcal{L}_{\xi }(X)}\left\{ [\bar{r}_{X,\varphi }]\right\} \mbox{,}$$
which implies the second formula of the Theorem.\\
\\
Finally, for the third formula, let us go back to the proof of Theorem \ref{eq:igualdad_gral}. It allows us to find an arc $\varphi _1$ in $X$ through $\xi $ satisfying $\bar{r}_{X,\varphi _1}=\mathrm{ord}_{\xi }\mathcal{G}_X^{(d)}$. This arc will be given by $(g_1(t),\ldots ,g_{n-d}(t),u_1g'(t),\ldots ,u_dg'(t))$ for some $g_1(t),\ldots ,g_{n-d}(t),g'(t)\in K[[t]]$ and some $u_1,\ldots ,u_d\in k$, and the projection $\varphi _1^{(d)}$ given by $(u_1g'(t),\ldots ,u_dg'(t))$ will be diagonal generic for $\mathcal{G}_X^{(d)}$. Let us choose $\varphi _0$ as the arc in $X$ through $\xi $ given by 
$$(g_1(t^b),\ldots ,g_{n-d}(t^b),u_1g'(t^b),\ldots ,u_dg'(t^b))\mbox{,}$$ 
for which the projection $\varphi _0^{(d)}$ is also diagonal generic for $\mathcal{G}_X^{(d)}$, so it is also valid for Theorem \ref{eq:igualdad_gral}, having $\bar{r}_{X,\varphi _0}=\mathrm{ord}_{\xi }\mathcal{G}_X^{(d)}$. In particular, this implies that $\bar{r}_{X,\varphi _0}=\bar{r}_{X,\varphi _1}$. Note also that $\mathrm{ord}(\varphi _0)=\mathrm{ord}(\varphi _1)\cdot b$.

We have found an arc such that 
$$r_{X,\varphi _0}=\mathrm{ord}_{\xi }\mathcal{G}_X^{(d)}\cdot \mathrm{ord}(\varphi _0)\mbox{,}$$ 
and for which 
$$r_{X,\varphi _0}=\left[ r_{X,\varphi _0}\right] =\rho _{X,\varphi _0}\mbox{,}$$ 
since $\mathcal{G}_X^{(d)}\in \frac{1}{b}\cdot \mathbb{Z}_{>0}$ and $\mathrm{ord}(\varphi _0)\in b\cdot \mathbb{Z}_{>0}$, concluding the proof.
\end{proof}

\vspace{1cm}

The following Corollary gives a characterization of $\mathrm{ord}_{\xi }\mathcal{G}^{(d)}_X$ in terms of the $\bar{\rho }_{X,\varphi }$.

\begin{Cor}
Let $X$ be a variety of dimension $d$. Let $\xi $ be a point in $\mathrm{\underline{Max}\, mult}(X)$. Consider the subset $\mathcal{C}\subset \mathcal{L}_{\xi }(X)$ of all arcs $\varphi $ satisfying $\bar{r}_{X,\varphi }=\mathrm{ord}_{\xi }\mathcal{G}^{(d)}_X$. Then:
$$\mathrm{ord}_{\xi }\mathcal{G}^{(d)}_X=\mathrm{max}_{\varphi \in \mathcal{C}}\left\{ \bar{\rho }_{X,\varphi }\right\}\mbox{.}$$
\end{Cor}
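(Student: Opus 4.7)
The plan is to sandwich $\bar{\rho}_{X,\varphi}$ between two quantities and then exhibit an arc for which the bound is attained. The inequality $\bar{\rho}_{X,\varphi}\leq \bar{r}_{X,\varphi}$ for every arc is immediate from Proposition \ref{thm:r_elim_amalgama}: dividing $\rho_{X,\varphi}=[r_{X,\varphi}]\leq r_{X,\varphi}$ by $\mathrm{ord}(\varphi)$ gives the inequality. In particular, for $\varphi\in\mathcal{C}$ we obtain
$$\bar{\rho}_{X,\varphi}\leq \bar{r}_{X,\varphi}=\mathrm{ord}_{\xi}\mathcal{G}^{(d)}_X,$$
so $\mathrm{ord}_{\xi}\mathcal{G}^{(d)}_X$ is an upper bound for the set $\{\bar{\rho}_{X,\varphi}:\varphi\in\mathcal{C}\}$.

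To see that this bound is attained, I would appeal directly to the construction used in the last paragraph of the proof of Theorem \ref{thm:main_order-rho}. There, starting from an arc $\varphi_1$ produced by Theorem \ref{thm:igualdad_gral} (so $\bar{r}_{X,\varphi_1}=\mathrm{ord}_{\xi}\mathcal{G}^{(d)}_X$), one composes with $t\mapsto t^b$, where $b=\mathrm{max\,mult}(X)$, to obtain a new arc $\varphi_0$. The substitution scales both $r_{X,\varphi}$ and $\mathrm{ord}(\varphi)$ by the factor $b$, so the normalized invariant $\bar{r}_{X,\varphi_0}=\bar{r}_{X,\varphi_1}=\mathrm{ord}_{\xi}\mathcal{G}^{(d)}_X$ is preserved; in particular $\varphi_0\in\mathcal{C}$.

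The key arithmetic point, already recorded in the proof of Theorem \ref{thm:main_order-rho}, is that $\mathrm{ord}_{\xi}\mathcal{G}^{(d)}_X\in \frac{1}{b}\mathbb{Z}_{>0}$ while $\mathrm{ord}(\varphi_0)\in b\cdot \mathbb{Z}_{>0}$, so their product
$$r_{X,\varphi_0}=\mathrm{ord}_{\xi}\mathcal{G}^{(d)}_X\cdot \mathrm{ord}(\varphi_0)$$
is actually an integer. Consequently $\rho_{X,\varphi_0}=[r_{X,\varphi_0}]=r_{X,\varphi_0}$, and hence
$$\bar{\rho}_{X,\varphi_0}=\bar{r}_{X,\varphi_0}=\mathrm{ord}_{\xi}\mathcal{G}^{(d)}_X.$$
This shows that the supremum in the statement is attained (so it is indeed a maximum) and equals $\mathrm{ord}_{\xi}\mathcal{G}^{(d)}_X$.

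I do not expect any serious obstacle in this argument; all the real work is already done in Proposition \ref{thm:r_elim_amalgama}, Theorem \ref{thm:igualdad_gral} and the construction at the end of Theorem \ref{thm:main_order-rho}. The only mild subtlety is noticing that the $t\mapsto t^b$ reparametrization trick preserves membership in $\mathcal{C}$ (because both $r_{X,\varphi}$ and $\mathrm{ord}(\varphi)$ scale by the same factor), so that the arc $\varphi_0$ realizes equality simultaneously for $\bar{r}$ and for $\bar{\rho}$.
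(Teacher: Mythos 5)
Your proposal is correct and follows essentially the same route as the paper: the upper bound $\bar{\rho}_{X,\varphi}\leq \bar{r}_{X,\varphi}=\mathrm{ord}_{\xi}\mathcal{G}^{(d)}_X$ for $\varphi\in\mathcal{C}$ via Proposition \ref{thm:r_elim_amalgama}, with attainment supplied by the arc $\varphi_0$ constructed at the end of the proof of Theorem \ref{thm:main_order-rho}. Your explicit observation that $\varphi_0\in\mathcal{C}$ (because the $t\mapsto t^{b}$ reparametrization scales $r_{X,\varphi}$ and $\mathrm{ord}(\varphi)$ by the same factor, preserving $\bar{r}$) merely spells out a step the paper leaves implicit when it cites Theorem \ref{thm:main_order-rho}.
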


\begin{proof}
For any arc $\varphi  \in \mathcal{C}$, 
$$\rho _{X,\varphi }=\left[ r_{X,\varphi }\right] =\left[ \mathrm{ord}_{\xi }\mathcal{G}^{(d)}_X\cdot \mathrm{ord}(\varphi )\right] \mbox{.}$$
It follows that 
$$\frac{\rho _{X,\varphi }}{\mathrm{ord}(\varphi )}=\frac{\left[ \mathrm{ord}_{\xi }\mathcal{G}^{(d)}_X\cdot \mathrm{ord}(\varphi )\right] }{\mathrm{ord}(\varphi )}\leq \mathrm{ord}_{\xi }\mathcal{G}^{(d)}_X\mbox{.}$$
The result is a consequence of this together with Theorem \ref{thm:main_order-rho}.
\end{proof}

\vspace{1cm}

The following relations hold for every arc $\varphi \in \mathcal{L}(X)$ through $\xi $:

\begin{Cor}
For $X$ as in Proposition \ref{thm:r_elim_amalgama}, and for every arc $\varphi \in \mathcal{L}(X)$ through $\xi $:
\begin{enumerate}
 
	\item $\bar{r}_{X,\varphi }\geq \bar{\rho }_{X,\varphi }$
	\item $\bar{\rho }_{X,\varphi }\geq [ \mathrm{ord}_{\xi }\mathcal{G}^{(d)}_X] $
	\item Since $\bar{r}_{X,\varphi }\geq \mathrm{ord}_{\xi }\mathcal{G}^{(d)}_X$ and $\bar{r}_{X,\varphi }\geq \bar{\rho }_{X,\varphi}\geq [ \mathrm{ord}_{\xi }\mathcal{G}^{(d)}_X] $, two possible situations can happen for $\bar{\rho }_{X,\varphi }$ and $\mathrm{ord}_{\xi }\mathcal{G}^{(d)}_X$, namely:
	
	\begin{itemize}
		\item $\bar{r}_{X,\varphi } \geq \mathrm{ord}_{\xi }\mathcal{G}^{(d)}_X\geq \bar{\rho }_{X,\varphi }\geq [ \mathrm{ord}_{\xi }\mathcal{G}^{(d)}_X] $
		\item $\bar{r}_{X,\varphi }\geq \bar{\rho }_{X,\varphi } > \mathrm{ord}_{\xi }\mathcal{G}^{(d)}_X\geq [ \mathrm{ord}_{\xi }\mathcal{G}^{(d)}_X] $
	\end{itemize}
\end{enumerate}
\end{Cor}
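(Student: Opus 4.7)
The Corollary is essentially a reorganization of inequalities that are already established in the paper, so the plan is to assemble them cleanly rather than do any new work.

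For part (1), I would invoke Proposition \ref{thm:r_elim_amalgama}, which gives $\rho_{X,\varphi} = [r_{X,\varphi}]$. Since $[r_{X,\varphi}] \leq r_{X,\varphi}$, dividing by $\mathrm{ord}(\varphi) > 0$ and using the definitions of $\bar{\rho}_{X,\varphi}$ and $\bar{r}_{X,\varphi}$ immediately yields $\bar{\rho}_{X,\varphi} \leq \bar{r}_{X,\varphi}$.

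For part (2), I would apply the first assertion of Theorem \ref{thm:main_order-rho}, namely $\rho_{X,\varphi} \geq [\mathrm{ord}_\xi \mathcal{G}_X^{(d)}] \cdot \mathrm{ord}(\varphi)$. Dividing both sides by $\mathrm{ord}(\varphi)$ gives $\bar{\rho}_{X,\varphi} \geq [\mathrm{ord}_\xi \mathcal{G}_X^{(d)}]$.

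For part (3), the chain $\bar{r}_{X,\varphi} \geq \bar{\rho}_{X,\varphi} \geq [\mathrm{ord}_\xi \mathcal{G}_X^{(d)}]$ follows from (1) and (2), while $\bar{r}_{X,\varphi} \geq \mathrm{ord}_\xi \mathcal{G}_X^{(d)}$ is exactly the content of Theorem \ref{thm:desigualdad_gral}. Then I would split on whether $\bar{\rho}_{X,\varphi} \leq \mathrm{ord}_\xi \mathcal{G}_X^{(d)}$ or $\bar{\rho}_{X,\varphi} > \mathrm{ord}_\xi \mathcal{G}_X^{(d)}$: in the first case the inequalities align as $\bar{r}_{X,\varphi} \geq \mathrm{ord}_\xi \mathcal{G}_X^{(d)} \geq \bar{\rho}_{X,\varphi} \geq [\mathrm{ord}_\xi \mathcal{G}_X^{(d)}]$, and in the second as $\bar{r}_{X,\varphi} \geq \bar{\rho}_{X,\varphi} > \mathrm{ord}_\xi \mathcal{G}_X^{(d)} \geq [\mathrm{ord}_\xi \mathcal{G}_X^{(d)}]$.

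There is no real obstacle here; the whole statement is a direct consequence of Proposition \ref{thm:r_elim_amalgama}, Theorem \ref{thm:desigualdad_gral}, and Theorem \ref{thm:main_order-rho}, all of which are already proved in the excerpt. The only thing to be careful about is that $\mathrm{ord}_\xi \mathcal{G}_X^{(d)}$ need not be an integer, so the strict inequality in the second case of (3) is consistent with $\bar{\rho}_{X,\varphi}$ being (possibly) rational but strictly greater than $\mathrm{ord}_\xi \mathcal{G}_X^{(d)}$, and the floors preserve the ordering.
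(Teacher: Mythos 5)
Your proposal is correct and follows essentially the same route as the paper: part (1) from $\rho_{X,\varphi}=[r_{X,\varphi}]\leq r_{X,\varphi}$, part (2) by dividing a floor inequality by $\mathrm{ord}(\varphi)$, and part (3) by a case split on $\bar{\rho}_{X,\varphi}$ versus $\mathrm{ord}_{\xi}\mathcal{G}_X^{(d)}$. The only cosmetic difference is that for (2) you cite the first formula of Theorem \ref{thm:main_order-rho} directly, whereas the paper re-derives the same chain inline from Proposition \ref{thm:r_elim_amalgama} and Theorem \ref{thm:desigualdad_gral} via $\bar{\rho}_{X,\varphi}=[r_{X,\varphi}]/\mathrm{ord}(\varphi)\geq [\mathrm{ord}_{\xi}\mathcal{G}_X^{(d)}\cdot \mathrm{ord}(\varphi)]/\mathrm{ord}(\varphi)\geq [\mathrm{ord}_{\xi}\mathcal{G}_X^{(d)}]$ --- the identical computation packaged differently.
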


\vspace{0.2cm}

\begin{proof}
\begin{enumerate}
\item Follows from the definitions of $\bar{r}_{X,\varphi }$ and $\bar{\rho }_{X,\varphi }$ toghether with Proposition \ref{thm:r_elim_amalgama}.
\item By Definition \ref{def:rho_bar}, Proposition \ref{thm:r_elim_amalgama}, Theorem \ref{thm:desigualdad_gral}:
$$\bar{\rho }_{X,\varphi }=\frac{\rho _{X,\varphi }}{\mathrm{ord}\varphi }=\frac{[r_{X,\varphi }]}{\mathrm{ord}\varphi }\geq \frac{ [ \mathrm{ord}_{\xi }\mathcal{G}^{(d)}_X\cdot \mathrm{ord}\varphi ] }{\mathrm{ord}\varphi }\geq [ \mathrm{ord}_{\xi }\mathcal{G}^{(d)}_X] \mbox{.}$$
\item This is just an observation which follows from (2) and (3).
\end{enumerate}
\end{proof}

\end{document}